\crefname{hypothesis}{Hypothesis}{Hypotheses}
\title{A projected subgradient method for the computation of adapted metrics for dynamical systems\thanks{Submitted to the editors DATE.
\funding{This work was funded by the German Research Foundation (DFG) through the grant ZA 873/4-1 and by through the grant NSFC 12171087.}}}
\author{Mauricio Louzeiro\thanks{Dongguan University of Technology, School of Computer Science and Technology, Dongguan, Guangdong, China (\email{mauriciosilvalouzeiro@hotmail.com})} \and Christoph Kawan\thanks{Institute of Informatics, LMU Munich, Oettingenstra{\ss}e 67, 80538 M\"{u}nchen, Germany (\email{christoph.kawan@lmu.de}).}
\and Sigurdur Hafstein\thanks{The Science Institute, University of Iceland, Dunhagi 5, 107 Reykjavik, Iceland
  (\email{shafstein@hi.is}).}
\and Peter Giesl\thanks{Department of Mathematics, University of Sussex, Falmer, BN1 9QH, United Kingdom (\email{P.A.Giesl@sussex.ac.uk}).}\and Jinyun Yuan\thanks{Dongguan University of Technology, School of Computer Science and Technology, Dongguan, Guangdong, China (\email{yuanjy@gmail.com})}.
  }
\newcommand{\R}{\mathbb{R}}%
\newcommand{\N}{\mathbb{N}}%
\newcommand{\tm}{\times}%
\newcommand{\tr}{\mathrm{tr}}%
\newcommand{\GL}{\mathrm{GL}}%
\newcommand{\SO}{\mathrm{SO}}%
\newcommand{\trn}{^{\scriptscriptstyle \top}}%
\newcommand{\rme}{\mathrm{e}}%
\newcommand{\rmD}{\mathrm{D}}%
\newcommand{\rmd}{\mathrm{d}}%
\newcommand{\fa}{\mathfrak{a}}%
\newcommand{\inner}{\mathrm{int}}%
\newcommand{\cl}{\mathrm{cl}}%
\newcommand{\res}{\mathrm{res}}%
\newcommand{\MC}{\mathcal{M}}%
\newcommand{\NC}{\mathcal{N}}%
\newcommand{\SC}{\mathcal{S}}%
\newcommand{\XC}{\mathcal{X}}%
\newcommand{\dom}{\mathrm{dom}}%
\newcommand{\PC}{\mathcal{P}}%
\newcommand{\diam}{\operatorname{diam}}%
\newcommand{\diag}{\operatorname{diag}}%
\begin{document}

\maketitle

\begin{abstract}
In this paper, we extend a recently established subgradient method for the computation of Riemannian metrics that optimizes certain singular value functions associated with dynamical systems. This extension is threefold. First, we introduce a projected subgradient method which results in Riemannian metrics whose parameters are confined to a compact convex set and we can thus prove that a minimizer exists; second, we allow inexact subgradients and study the effect of the errors on the computed metrics; and third, we analyze the subgradient algorithm for three different choices of step sizes: constant, exogenous and Polyak. The new methods are illustrated by application to dimension and entropy estimation of the H\'enon map.
\end{abstract}

\begin{keywords}
  Nonlinear systems, singular value optimization, optimization on manifolds, subgradient algorithm, adapted metrics, dimension estimates, restoration entropy%
\end{keywords}

\begin{AMS}
  93B07, 93B53, 93B70%
\end{AMS}

\section{Introduction}
Some of the central problems in the theory of smooth dynamical systems are related to the optimization of certain singular value functions of the derivative cocycle associated with the system. These problems include the computation of Lyapunov exponents, the computation of contraction metrics for exponentially stable equilibria and the upper estimation of the  dimension of attractors. A more recent objective, motivated by problems in information-based control, is the computation of the minimal data rate at which sensory data has to be transmitted to an observer at a remote location such that it can produce reliable state estimates. Given a discrete-time system induced by a smooth map $\phi:\R^n \rightarrow \R^n$, in most of these problems one is interested in the minimization of a function of the type%
\begin{equation*}
  \Sigma_k:P \mapsto \max_{x \in K}\sum_{i=1}^k \log \alpha_i^P(x),%
\end{equation*}
where $K$ is a compact forward-invariant set of $\phi$ and $\alpha_1^P(x) \geq \alpha_2^P(x) \geq \ldots \geq \alpha_n^P(x)$ denote the singular values of the derivative $\rmD\phi(x)$, computed with respect to a Riemannian metric $P(\cdot)$ on $K$ (see Subsection \ref{subsec_problem} for the technical definition). For instance, in the case of contraction analysis for equilibria, one is interested in finding a metric $P(\cdot)$ such that $\Sigma_1(P) < 1$, and in the case of the remote estimation problem, the smallest data rate is given by the minimum of $P \mapsto \max \{ \Sigma_k(P) : 0 \leq k \leq n \}$.%

In \cite{kawan2021subgradient}, we established a convexity property of the functions $\Sigma_k$, which allowed us to solve a restricted version of the minimization problem via the subgradient algorithm on manifolds with lower bounded sectional curvature \cite{FerreiraLouzeiroPrudente2019:subgrad,Wang2018,Ferreira2006}. Namely, we had to restrict the class of Riemannian metrics to metrics of the form $P(x) = \rme^{r(x)}p$ with a polynomial $r(x)$ and a positive definite matrix $p$. The problem can then be formulated as a geodesically convex optimization problem on a manifold of the form $\R^N \tm \SC^+_n$, where the Euclidean component $\R^N$ serves as the parameter space for the polynomials and $\SC^+_n$ is the Riemannian manifold of positive definite matrices. We demonstrated the efficiency of the algorithm by applying it to the computation of the minimal data rate in the remote state estimation problem (described by the notion of \emph{restoration entropy}) for three nonlinear systems - the H\'{e}non system, a bouncing ball system and the Lorenz system. In all three examples, we observed a fast convergence and, where the exact value of the entropy was known, the same value was obtained by our algorithm with a high accuracy.%

Three drawbacks of the algorithm developed in \cite{kawan2021subgradient} are the following: first, the computation of a subgradient, which has to be performed in each step of the algorithm, involves a maximization problem that can, in general, only be solved by brute force. This slows  the algorithm down and additionally introduces an error in the computed subgradient that is hard to quantify. Second, we do not know general conditions under which the optimization problem admits a solution. This implies that the algorithm does not necessarily converge. In particular, if no minimizer exists, the computed metric could become asymptotically singular. Finally, using the exogenous step size rule, we cannot provide an explicit estimate on the rate of convergence even if a minimizer exists. In this paper, we attempt to overcome all these drawbacks to a certain extent. %

A version of the classical (Euclidean) subgradient algorithm is the projected subgradient algorithm in which the search for a minimizer is only carried out within a compact convex subset of the given domain. The compactness guarantees that for this restricted problem a minimizer exists. Moreover, any upper bound on the diameter of the compact set together with an estimate on the maximal norm of the subgradient allows us to provide explicit estimates on the number of steps necessary to achieve a given accuracy. In this paper, we analyze an inexact projected subgradient algorithm on Hadamard manifolds and provide convergence results for three different choices of step sizes:  exogenous, constant and Polyak. In particular, we prove that errors in the computation of the subgradient do not accumulate as the number of steps grows.%

We then study order intervals on $\SC^+_n$ as candidates for compact convex subsets, which can be used in the projected subgradient algorithm for singular value optimization. We further analyze the error in the computation of the subgradient for singular value optimization, which turns out to depend sensitively on a spectral gap in the singular value spectrum. Finally, we compute a Lipschitz constant for the functions $\Sigma_k$, which only depends on the dimension $n$. This analysis allows us to apply the subgradient algorithm with different types of step sizes to the problems of dimension and data rate estimation for the H\'enon system.%

The advantages of the projected subgradient algorithm thus are the following: the existence of a minimizer is guaranteed and we can prove the convergence; we can ensure that the computed Riemannian metrics do not have values that are too close to the boundary of $\SC^+_n$, which is important in practical applications; we can provide explicit estimates on the rate of convergence and we can use Polyak step sizes, which delivered the best upper bound on the restoration entropy for our example system, see Section \ref{example:resent}.

Let us give an overview of the paper: In Section \ref{sec:2} we introduce notations as well as the subgradient algorithm. In Section \ref{sec:3} we introduce the projected subgradient method on Hadamard manifolds with inexact subgradients and perform a convergence analysis with three choices of step sizes. Section \ref{sec:4} applies the projected inexact subgradient method to the singular value optimization problems and presents further results, in particular an error analysis. In Secion \ref{sec:5} the methods are applied to derive upper bounds on the dimension and entropy of the H\'enon system, before conclusions in Section \ref{sec:5}. The appendix proves a bound the sectional curvature of $\SC_n^+$.

\section{Preliminaries}\label{sec:2}

\subsection{General notation and definitions}

We write $\N = \{1,2,3,\ldots\}$ for the natural numbers and $\N_0 := \N \cup \{0\}$. By $\R$ and $\R_+$, we denote the reals and non-negative reals, respectively. By $\ln$ ($\log_2$), we denote the natural logarithm (logarithm to the base $2$).%

\paragraph{Matrices} We write $\R^{n \tm n}$ for the vector space of all $n \tm n$ real matrices. The notations $A\trn$, $\det(A)$ and $\tr(A)$ stand for the transpose, the determinant and the trace of $A \in \R^{n \tm n}$, respectively. For the singular values of $A$, i.e.~the eigenvalues of $\sqrt{A\trn A}$, we write $\alpha_1(A) \geq \ldots \geq \alpha_n(A)$. By $I_{n \tm n}$ and $0_{n \tm n}$, we denote the identity and the zero matrix in $\R^{n \tm n}$, respectively. If the dimension is clear from the context, we also omit the subscript. The general linear group is denoted by $\GL(n,\R) = \{ A \in \R^{n \tm n} : A^{-1} \mbox{ exists} \}$. The notation $\diag(\lambda_1,\ldots,\lambda_n)$ is used for the diagonal matrix with entries $\lambda_1,\ldots,\lambda_n$. We write $\langle \cdot,\cdot \rangle_F$ for the standard inner product on $\R^{n \tm n}$ which is given by $\langle X,Y \rangle_F = \tr(XY\trn)$ and induces the Frobenius norm $\|X\|_F = \sqrt{\tr(XX\trn)}$.%

\paragraph{Manifolds} Next, we present some notations, definitions and basic properties of Riemannian manifolds used throughout the paper; for more details, we refer the reader to \cite{DoCa92, Sakai1996, lee2006riemannian, Bacak:2014:2}.%

If $\MC$ is a smooth manifold, we write $T_p\MC$ for the tangent space at $p \in \MC$. The derivative of a smooth mapping $f:\MC \rightarrow \NC$ between manifolds at $p \in \MC$ is denoted by $\rmD f(p):T_p\MC \rightarrow T_{f(p)}\NC$. If $K \subset \MC$ is any subset, we write $C^k(K,\NC)$ for the set of $k$ times continuously differentiable maps from $K$ into $\NC$, where $k = 0$ means continuous. We write $\inner(A)$ and $\cl(A)$ for the interior and closure of a subset $A \subset \MC$, respectively. If $\MC$ is equipped with a Riemannian metric, we write $\langle \cdot,\cdot \rangle_p$ and $\|\cdot\|_p$ for the associated inner product and norm on $T_p\MC$, respectively. If the point $p$ is clear from the context, we also omit the subscript $p$ in the inner product and the norm. By $\exp_p$, we denote the Riemannian exponential map at $p \in M$. The diameter of a subset $\XC \subset \MC$ is given by $\diam(\XC) = \sup_{x,y\in\XC}d(x,y)$, where $d(\cdot,\cdot)$ is the Riemannian distance function on $\MC$. The sectional curvature of the two-dimensional subspace of $T_p\MC$, spanned by two linearly independent vectors $x,y \in T_p\MC$, is defined by%
\begin{equation}\label{eq:secc-curv}
   K(x,y) := \frac{ \langle R(x,y)y,x \rangle}{\|x\|^2\|y\|^2 - \langle x,y \rangle^2},%
\end{equation}
where $R$ is the curvature tensor of $\MC$, evaluated at the point $p$.%

A Riemannian manifold $\MC$ is called a \emph{Hadamard manifold} if it is simply connected, complete and its sectional curvature is non-positive everywhere. In this case, for each $p \in \MC$, the exponential map $\exp_p \colon T_p\MC \to \MC$ is a diffeomorphism, which implies that the geodesics connecting any two distinct points exist and are unique. We write $\log_p:\MC \rightarrow T_p\MC$ for the inverse of $\exp_p$.%

The geodesic $\gamma \colon [0,1] \to \MC$ connecting $p$ and $q$ is denoted by $\gamma_{p,q}$. A nonempty subset $\XC \subseteq \MC$ is called \emph{(geodesically) convex} if the image of $\gamma_{p,q}$ lies completely in $\XC$ for any two points $p,q \in \XC$. If $\XC$ is also closed, then the mapping $\PC_{\XC}(p) \coloneqq \{q\in \MC \colon d(p,q) = \inf_{q'\in \XC}d(p,q') \}$ is single-valued and non-expansive, i.e.%
\begin{equation}\label{eq:nonexp-proj}
	d(\PC_{\XC}(p),\PC_{\XC}(q)) \leq d(p,q) \mbox{\quad for all\ } p,q \in \MC.%
\end{equation}
In this case, $\PC_{\XC}(p)$ is called the \emph{projection} of $p$ onto $\XC$.%

If $\MC$ is a Hadamard manifold, a function $f\colon \MC \to \overline{\R} := \R \cup \{\pm \infty\}$ is called%
\begin{enumerate}
\item[(i)] \emph{proper} if $\dom f \coloneqq \{ p \in \MC \colon f(p) < \infty \} \neq \emptyset$ and $f(p) > -\infty$ for all $p \in \MC$.%
\item[(ii)]	\emph{convex} if, for all $p,q \in \MC$, the composition $f \circ \gamma_{p,q} \colon [0,1] \subset \R \to \overline{\R}$ is a convex function on $[0,1]$ in the classical sense.%
\item[(iii)]\label{item:lsc} \emph{lower semicontinuous} at $p\in \dom f$ if for each sequence $\{p_k\}$ converging to $p$, we have $\liminf_{k \to \infty} f(p_k)\geq f(p)$.%
\end{enumerate}

The \emph{subdifferential} of a proper convex function $f:\MC \rightarrow \overline{\R}$ at $p\in\dom f$ is defined by%
\begin{equation}\label{eq:defSugrad}
	\partial f(p) := \left\{s \in T_p\MC \colon f(q) \geq f(p)+\left\langle s,\log_pq\right\rangle,\ \forall q\in\dom f\right\}.%
\end{equation}
Each element of $\partial f(p)$ is called a \emph{subgradient} of $f$ at $p$. We remark that $\partial f(p)$ is nonempty for all $p \in \inner(\dom f)$; see \cite[Prop.~2.5]{WangLiWangYao2015}.%

\paragraph{Positive definite matrices} Next, we recall some facts about the space of positive definite matrices. For a comprehensive treatment, we refer to \cite[Ch.~6]{bhatia2009positive}.%

We define $\SC_n := \{ A \in \R^{n \tm n} : A = A\trn \}$ and write $A \leq B$ ($A < B$) if $A,B \in \SC_n$ and $B - A$ is positive semidefinite (positive definite). The eigenvalues of any $A \in \SC_n$ are denoted by $\lambda_1(A) \geq \ldots \geq \lambda_n(A)$. The space of all positive definite matrices is denoted by $\SC^+_n := \{ A \in \SC_n : A > 0 \}$. As an open subset of the Euclidean space $\SC_n$, it is a smooth manifold. The tangent space of $\SC^+_n$ at any point $p$ can be canonically identified with $\SC_n$. We equip $\SC^+_n$ with the so-called \emph{trace metric}%
\begin{equation*}
  \langle v,w \rangle_p := \tr(p^{-1}vp^{-1}w) \mbox{\quad for all\ } p \in \SC^+_n,\ v,w \in T_p\SC^+_n = \SC_n.%
\end{equation*}
Some facts about the Riemannian manifold $\SC^+_n$, used in this paper, are the following:%
\begin{itemize}
\item $\SC^+_n$ is a Hadamard manifold as well as a symmetric space.%
\item The group $\GL(n,\R)$ acts transitively on $\SC^+_n$ by isometries. This action is given by $(g,p) \mapsto gpg\trn$.%
\item The inversion $g \mapsto g^{-1}$ is an isometry of $\SC^+_n$.%
\item The unique geodesic joining two points $p,q \in \SC^+_n$ is given by%
\begin{equation*}
  \gamma_{p,q}(\theta) = p \#_{\theta}\, q := p^{\frac{1}{2}}(p^{-\frac{1}{2}}qp^{-\frac{1}{2}})^{\theta} p^{\frac{1}{2}},\quad \theta \in [0,1].%
\end{equation*}
We also write $p \#\, q := p \#_{\frac{1}{2}}\, q$ for the midpoint of the geodesic.%
\item The unique geodesic $\gamma$ with $\gamma(0) = p$ and $\dot{\gamma}(0) = v$ is given by%
\begin{equation*}
  \gamma(\theta) = p^{\frac{1}{2}}\exp(\theta p^{-\frac{1}{2}}vp^{-\frac{1}{2}})p^{\frac{1}{2}},\quad \theta \in \R.%
\end{equation*}
\item The distance function on $\SC^+_n$ can be written as%
\begin{equation}\label{eq_snp_dist}
  d(p,q) = \Bigl(\sum_{i=1}^n \ln^2 \lambda_i(p^{-1}q)\Bigr)^{\frac{1}{2}}.%
\end{equation}
Observe that although $p^{-1}q$ is not necessarily symmetric, it has real eigenvalues, since it is similar to the symmetric matrix $p^{-1/2}qp^{-1/2}$.%
\end{itemize}

\subsection{The subgradient method for singular value optimization}\label{subsec_problem}

For any $g \in \GL(n,\R)$, we write%
\begin{equation*}
  \vec{\sigma}(g) := (\log_2\alpha_1(g),\ldots,\log_2\alpha_n(g)),%
\end{equation*}
which defines a mapping from $\GL(n,\R)$ into the cone%
\begin{equation*}
  \fa^+ := \{ \xi \in \R^n: \xi_1 \geq \xi_2 \geq \ldots \geq \xi_n \}.%
\end{equation*}
On $\fa^+$, we define the partial order%
\begin{equation*}
  \xi \preceq \eta :\Leftrightarrow \left\{\begin{array}{cl} \xi_1 + \cdots + \xi_k \leq \eta_1 + \cdots + \eta_k & \mbox{for } k = 1,\ldots,n-1,\\
	             \xi_1 + \cdots + \xi_k = \eta_1 + \cdots + \eta_k & \mbox{for } k = n.%
						\end{array}\right.%
\end{equation*}

Now, consider a discrete-time dynamical system%
\begin{equation*}
  x_{t+1} = \phi(x_t)%
\end{equation*}
with a $C^1$-map $\phi:\R^n \rightarrow \R^n$. We assume that $K \subset \R^n$ is a compact forward-invariant set, i.e.~$\phi(K) \subseteq K$. Moreover, we assume that $K = \cl(\inner K)$ and%
\begin{equation*}
  A(x) := \rmD\phi(x) \in \GL(n,\R) \mbox{\quad for all\ } x \in K.%
\end{equation*}
A Riemannian metric on $K$ can be regarded as a continuous map $P:K \rightarrow \SC^+_n$. We define the singular values $\alpha_1^P(x) \geq \ldots \geq \alpha_n^P(x)$ of $A(x)$ with respect to the metric $P$ as the eigenvalues of $[B(x)B(x)\trn]^{\frac{1}{2}}$, where%
\begin{equation*}
  B(x) := P(\phi(x))^{\frac{1}{2}} A(x) P(x)^{-\frac{1}{2}}.%
\end{equation*}
That is, $\alpha_i^P(x)$ are the ordinary singular values of $B(x)$, or the singular values of $A(x)$ regarded as a linear operator between the inner product spaces $(\R^n,\langle P(x) \cdot,\cdot \rangle)$ and $(\R^n,\langle P(\phi(x))\cdot,\cdot \rangle)$, respectively, see \cite[Lem.~5]{kawan2021remote}.%

In \cite[Lem.~3.2]{kawan2021subgradient}, we have proved that for any two metrics $P$ and $Q$, the relation%
\begin{align}\label{eq_vf_convexity}
\begin{split}
  &\vec{\sigma}([P(\phi(x)) \#_{\theta}\, Q(\phi(x))]^{\frac{1}{2}}A(x)[P(x) \#_{\theta}\, Q(x)]^{-\frac{1}{2}}) \\
	&\preceq (1-\theta)\vec{\sigma}(P(\phi(x))^{\frac{1}{2}}A(x)P(x)^{-\frac{1}{2}}) + \theta \vec{\sigma}(Q(\phi(x))^{\frac{1}{2}}A(x)Q(x)^{-\frac{1}{2}})%
\end{split}
\end{align}
holds for all $\theta \in [0,1]$ and $x \in K$. This can be regarded as a form of (geodesic) convexity. For each $k \in \{1,\ldots,n\}$ and $x \in K$, we introduce the function%
\begin{equation*}
  \Sigma_{k,x}:C^0(K,\SC^+_n) \rightarrow \R,\quad \Sigma_{k,x}(P) := \sum_{i=1}^k \log_2 \alpha_i^P(x).%
\end{equation*}
Then, \eqref{eq_vf_convexity} implies that for all $P,Q \in C^0(K,\SC^+_n)$ and $\theta \in [0,1]$ the inequality%
\begin{equation*}
  \Sigma_{k,x}(P \#_{\theta}\, Q) \leq (1 - \theta)\Sigma_{k,x}(P) + \theta\Sigma_{k,x}(Q)%
\end{equation*}
holds, where $P \#_{\theta}\, Q$ denotes the Riemannian metric%
\begin{equation*}
  (P \#_{\theta}\, Q)(x) := P(x) \#_{\theta}\, Q(x) \mbox{\quad for all\ } x \in K.%
\end{equation*}
The same inequality then also holds for the functions%
\begin{equation*}
  \Sigma_k:C^0(K,\SC^+_n) \rightarrow \R,\quad \Sigma_k(P) := \max_{x \in K}\Sigma_{k,x}(P).%
\end{equation*}
We are interested in minimizing one of the functions $\Sigma_k$ or their maximum $\max_{0 \leq k \leq n}\Sigma_k$, where $\Sigma_0(P) :\equiv 0$. To attack this problem via convex optimization, we have to restrict the domain to a finite-dimensional geodesically convex space. This can be achieved by considering only metrics of the form $P(x) = \rme^{r(x)}p$ with a polynomial function $r(x)$ satisfying $\deg r(x) \leq d$ for a given $d \in \N$ and $p\in \SC_n^+$. The space of all such metrics will be denoted by $C_d(K)$ and can be identified with $\R^N \tm \SC^+_n$, where $N = \binom{d+n}{n}$ is the number of coefficients in the polynomial. We note that $\R^N \tm \SC^+_n$ is a Hadamard manifold whose sectional curvature is bounded below by $-\frac{1}{2}$, see Proposition \ref{prop_sec_curv_lb}. This allows us to use different variants of the subgradient algorithm on manifolds for convex optimization problems on $\R^N \tm \SC^+_n$.%

We consider two problems in this paper: dimension and entropy estimation. In the first case, it is well-known (see, e.g., \cite[Thm.~9.1.1]{boichenko2005dimension}) 
that the Lyapunov, and thus the Hausdorff dimension of $K$ can be bounded by $\dim_L(K) \leq k + s$ for an integer $k \in \{0,1,\ldots,n-1\}$ and $s \in [0,1)$ provided that $K$ is backward-invariant and for some Riemannian metric $P$ the following inequality holds:%
\begin{equation*}
  \alpha_1^P(x)\alpha_2^P(x) \ldots \alpha_k^P(x) \alpha_{k+1}^P(x)^s < 1 \mbox{\quad for all\ } x \in K.%
\end{equation*}
Note that it is sufficient to verify this condition for all $x\in \tilde{K}$ in a larger set $\tilde{K}\supset K$, that is compact and forward invariant and thus contains an invariant set $K=\omega(\tilde{K})$.
To verify such a condition with our methods, we introduce the functions%
\begin{equation*}
  \Sigma_{k+s,x}(P) := \sum_{i=1}^k\log_2\alpha_i^P(x) + s\log_2\alpha_{k+1}^P(x)%
\end{equation*}
for any $k,s$ as above and $x \in K$. We can write $\Sigma_{k+s,x}(P) = s\Sigma_{k+1,x}(P) + (1-s)\Sigma_{k,x}(P)$, implying that $\Sigma_{k+s,x}$ satisfies the same convexity properties as described above. Hence, given any guess on $k$ and $s$, we can use the subgradient algorithm to minimize $P \mapsto \max_{x\in K}\Sigma_{k+s,x}(P)$ over $C_d(K)$ for some $d\in\N$ and check if the minimum is less than $0$. If this is the case, we can do the same for smaller choices of $k$ and $s$ in order to optimize the dimension estimate. If the minimum happens to be larger than $1$, we have to increase $d$ or $s$.%

The second problem is the upper estimation of restoration entropy, which measures the smallest data rate above which the system is regularly or finely observable on $K$ over a digital channel operating at this rate, see \cite{matveev2019observation} for precise definitions. By \cite{kawan2021remote}, the restoration entropy on $K$ satisfies%
\begin{equation*}
  h_{\res}(\phi,K) = \inf_{P \in C^0(K,\SC^+_n)}\max_{0 \leq k \leq n}\Sigma_k(P).%
\end{equation*}
Hence, we can again restrict the class of Riemannian metrics to $C_d(K)$ for some $d$ and minimize the convex function $\max\{\Sigma_k(P) : k =0,1,\ldots,n \}$ over $C_d(K)$.%

We fix a parametrization of the space of polynomials in $n$ variables and degree $\leq d$ over $\R^N$ with $N = \binom{d+n}{n}$, such that $r_a(x)$ denotes the polynomial with coefficient vector $a \in \R^N$. We then introduce the functions%
\begin{equation*}
  J_{k+s,x}(a,p) := \Sigma_{k+s,x}(\rme^{r_a(\cdot)}p),\quad J_{k+s,x}:\R^N \tm \SC^+_n \rightarrow \R%
\end{equation*}
for $k \in \{0,1,\ldots,n-1\}$ and $s \in [0,1)$. We further introduce%
\begin{equation*}
  J_{k+s}(a,p) := \max_{x\in K}J_{k+s,x}(a,p),\quad J_{k+s}:\R^N \tm \SC^+_n \rightarrow \R.%
\end{equation*}
Via the following proposition, we can reduce the computation of subgradients for our objective functions to the computation of subgradients of the functions $J_{k,x}$ with integer $k$. Its simple proof is omitted.%

\begin{proposition}
The following statements hold:%
\begin{enumerate}
\item[(i)] Assume that the maximum of $J_{k+s,x}(a,p)$ over $x \in K$ is attained at $x_*$. Then any subgradient $v$ of $J_{k+s,x_*}$ at $(a,p)$ is also a subgradient of $J_{k+s}$ at $(a,p)$.%
\item[(ii)] If $v,w \in \R^N \tm \SC_n$ are subgradients of $J_{k,x}$ and $J_{k+1,x}$ at $(a,p)$, respectively, then $(1 - s)v + sw$ is a subgradient of $J_{k+s,x}$ at $(a,p)$.%
\item[(iii)] If the maximum of $J_{k,x}(a,p)$, $k \in \{0,1,\ldots,n\}$, is attained at $k_*$, then any subgradient $v$ of $J_{k_*,x}$ at $(a,p)$ is a subgradient of $(a,p) \mapsto \max\{ J_{k,x}(a,p) : 0 \leq k \leq n \}$ at $(a,p)$.
\end{enumerate}
\end{proposition}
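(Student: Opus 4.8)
The plan is to apply the definition of the subdifferential in \eqref{eq:defSugrad} directly, exploiting that all the functions in play --- $J_{k,x}$, $J_{k+1,x}$, $J_{k+s,x}$, $J_{k+s}$, and $(a,p)\mapsto\max\{J_{k,x}(a,p):0\le k\le n\}$ --- are real-valued on all of $\R^N\tm\SC^+_n$, hence proper with full domain, and all live on the \emph{same} Hadamard manifold, so that the tangent space $T_{(a,p)}(\R^N\tm\SC^+_n)$, the inner product $\langle\cdot,\cdot\rangle_{(a,p)}$, and the logarithm map $\log_{(a,p)}$ are common to every inequality. Consequently no curvature considerations enter, the domain in \eqref{eq:defSugrad} is the whole manifold in every case, and each of the three statements reduces to a short manipulation of the defining subgradient inequality: (i) and (iii) are the familiar ``a subgradient of the active member is a subgradient of a pointwise maximum of convex functions,'' and (ii) is the equally familiar ``a convex combination of subgradients is a subgradient of the corresponding convex combination of functions.''

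For (i), I would use that the maximum defining $J_{k+s}(a,p)$ is attained at $x_*$, so $J_{k+s}(a,p)=J_{k+s,x_*}(a,p)$, while $J_{k+s}(b,q)\ge J_{k+s,x_*}(b,q)$ for every $(b,q)\in\R^N\tm\SC^+_n$. Starting from the subgradient inequality $J_{k+s,x_*}(b,q)\ge J_{k+s,x_*}(a,p)+\langle v,\log_{(a,p)}(b,q)\rangle$ and chaining these two relations yields $J_{k+s}(b,q)\ge J_{k+s}(a,p)+\langle v,\log_{(a,p)}(b,q)\rangle$ for all $(b,q)$, i.e.\ $v\in\partial J_{k+s}(a,p)$. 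Statement (iii) is proved verbatim the same way, with the finite family $\{J_{k,x}\}_{k=0}^{n}$ and the active index $k_*$ in place of $\{J_{k+s,x}\}_{x\in K}$ and $x_*$. For (ii), I would write the subgradient inequalities for $v$ with respect to $J_{k,x}$ and for $w$ with respect to $J_{k+1,x}$ at $(a,p)$, take the convex combination with weights $1-s$ and $s$, and invoke the identity $J_{k+s,x}=(1-s)J_{k,x}+sJ_{k+1,x}$ (which follows from $\Sigma_{k+s,x}=s\Sigma_{k+1,x}+(1-s)\Sigma_{k,x}$) on both sides; linearity of $\langle\cdot,\cdot\rangle_{(a,p)}$ in the first slot collapses the two linear terms into $\langle(1-s)v+sw,\log_{(a,p)}(b,q)\rangle$, giving the claim.

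I do not anticipate a genuine obstacle here --- this is precisely why the paper omits the proof --- but the one point worth verifying is that the subdifferential inequality in \eqref{eq:defSugrad} must hold for \emph{all} points of the domain, and that the domain of each function involved is indeed all of $\R^N\tm\SC^+_n$; this is what licenses the chaining in (i) and (iii) and the averaging in (ii) over the entire manifold without any restriction. Everything else is a one-line computation.
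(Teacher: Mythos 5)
Your proposal is correct and is precisely the ``simple proof'' the paper omits: (i) and (iii) follow from the standard fact that a subgradient of an active member of a pointwise maximum of convex functions is a subgradient of the maximum, and (ii) from linearity of the subgradient inequality under convex combination using $J_{k+s,x}=(1-s)J_{k,x}+sJ_{k+1,x}$. Your remark that every function involved is finite on all of $\R^N\tm\SC^+_n$, so that the chaining and averaging are licensed over the whole manifold, is exactly the right thing to check and is indeed satisfied here.
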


A straightforward computation shows that%
\begin{equation}\label{eq_j1a2}
  J_{k+s,x}(a,p) = \underbrace{\frac{k+s}{2\ln 2}(r_a(\phi(x)) - r_a(x))}_{=: J^1_{k+s,x}(a)} + \underbrace{\Sigma_{k+s,x}(p)}_{=: J^2_{k+s,x}(p)}.%
\end{equation}
It thus suffices to compute subgradients of $a \mapsto J^1_{k,x}(a)$ and $p \mapsto J^2_{k,x}(p)$ separately. Since $J^1_{k+s,x}$ is a linear function, the only subgradient is its constant gradient which is a polynomial in the components of $\phi(x)$ and $x$. The computation of a subgradient for $J^2_{k,x}$ is explained in \cite{kawan2021subgradient}. We briefly summarize the procedure how to compute it. Here, we use the notation $\zeta_x(p) := p^{\frac{1}{2}}A(x)p^{-\frac{1}{2}}$:%
\begin{enumerate}
\item[(1)] Compute an orthonormal basis $\{e_i\}$ of the tangent space $(\SC_n,\langle\cdot,\cdot\rangle_p)$.%
\item[(2)] Determine the solutions $X_i \in \SC_n$ of the Lyapunov equations%
\begin{equation*}
  p^{\frac{1}{2}}X_i + X_ip^{-\frac{1}{2}} = e_i.%
\end{equation*}
\item[(3)] Compute the matrices%
\begin{equation*}
  Z_i := \rmD\zeta_x(p)e_i = X_iA(x)p^{-\frac{1}{2}} - p^{\frac{1}{2}}A(x)p^{-\frac{1}{2}}X_ip^{-\frac{1}{2}}.%
\end{equation*}
\item[(4)] Compute a singular value decomposition%
\begin{equation*}
  \zeta_x(p) = U\trn \diag(\alpha_1(\zeta_x(p)),\ldots,\alpha_n(\zeta_x(p))) V.%
\end{equation*}
\item[(5)] Compute%
\begin{equation*}
  S_x := \frac{1}{\ln 2} U\trn \diag(\alpha_1(\zeta_x(p))^{-1},\ldots,\alpha_k(\zeta_x(p))^{-1},0,\ldots,0)V.%
\end{equation*}
\item[(6)] Compute a subgradient $v$ of $J^2_{k,x}$ by $v = \sum_i \tr[S_x\trn Z_i]e_i$.%
\end{enumerate}
To be precise, we can only guarantee that the above procedure works if there is a spectral gap, i.e.~$\alpha_k(\zeta_x(p)) > \alpha_{k+1}(\zeta_x(p))$.%

\section{Projected subgradient method on Hadamard manifolds}
\label{sec:3}

In this section, we introduce the projected subgradient method on Hadamard manifolds with inexact subgradients and analyze its convergence properties for different choices of step sizes. Later, we apply the method to the problem of singular value optimization for dynamical systems, where the Hadamard manifold is of the form $\MC = \R^N \tm \SC^+_n$ for some $n,N \in \N$ and the objective function is one of the functions $J_{k+s}$ or $\max\{ J_k : 0 \leq k \leq n \}$, as introduced in the preceding section.

Let $f\colon\MC \to \overline{\R}$ be a proper convex function. We consider the subgradient method to solve the optimization problem defined by%
\begin{equation}\label{eq:OptP}
	\min \{ f(p) \colon  p\in \mathcal{X} \},%
\end{equation}
where $\MC$ is a Hadamard manifold with lower bounded sectional curvature and $\XC \subseteq \MC$ is a compact convex set. {\it Throughout this section, we denote by $D$ an upper bound for the diameter of $\XC$, i.e., $\diam(\XC) \leq D$.}

The analysis presented here is based on \cite{FerreiraOliveira1998,FerreiraLouzeiroPrudente2019:subgrad}.%

\subsection{Preliminary results}\label{subsec:aux}

In this section, all functions $f:\MC \to\overline{\R}$ are assumed to be convex and lower semicontinuous on $\MC$.%

The following result is proved in \cite[Prop.~2.5]{WangLiWangYao2015}.%

\begin{proposition}\label{prop:subglim}
Let $\{p_k\} \subset \MC$ be a bounded sequence. If $\{s_k\}$ is a sequence satisfying $s_k \in \partial f(p_k)$ for each $k\in\N$, then $\{s_k\}$ is bounded as well.
\end{proposition}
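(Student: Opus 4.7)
The plan is to exploit the subgradient inequality with a test point that probes the direction of $s_k$, and combine this with local boundedness of convex lower-semicontinuous functions. First I would note that since each $s_k$ exists, every $p_k$ lies in $\dom f$, and the boundedness assumption together with the existence of subgradients places $\{p_k\}$ inside a compact subset of $\inner(\dom f)$. Enlarging this by a fixed radius $r > 0$ gives a compact set $K' \subset \inner(\dom f)$ on which the convex lsc function $f$ is bounded by some $M < \infty$; this follows from the classical fact that convex lsc functions on Hadamard manifolds are continuous, hence locally bounded, on the interior of their domain (provable by pulling back through $\exp_p$ and invoking the corresponding Euclidean result).

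Next I would set $q_k := \exp_{p_k}(r\, s_k/\|s_k\|_{p_k})$, so that $d(p_k,q_k) = r$ and thus $q_k \in K' \subseteq \dom f$, with $\log_{p_k}(q_k) = r\, s_k/\|s_k\|_{p_k}$. Substituting $q = q_k$ into the defining inequality \eqref{eq:defSugrad} of the subgradient yields
\begin{equation*}
f(q_k) \;\geq\; f(p_k) + \bigl\langle s_k,\, r\, s_k/\|s_k\|_{p_k} \bigr\rangle_{p_k} \;=\; f(p_k) + r\,\|s_k\|_{p_k}.
\end{equation*}
Rearranging gives $\|s_k\|_{p_k} \leq r^{-1}(f(q_k) - f(p_k)) \leq 2M/r$, a bound independent of $k$. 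This is the boundedness claim.

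The main delicate step is the auxiliary boundedness of $f$ on $K'$: it relies on the continuity of convex lsc functions on the interior of their domain, which is routine on $\R^n$ but requires the transfer via normal coordinates on a Hadamard manifold. Once this is in place, the rest of the argument is just the standard device of feeding the subgradient inequality a test point in the direction of $s_k$ at a controlled distance, turning a one-sided inequality into a two-sided norm bound. I expect no other serious obstacle, since the non-positive curvature plays no role here beyond ensuring that $\exp_{p_k}$ is a diffeomorphism, so that the test point $q_k$ is well-defined for any length $r$.
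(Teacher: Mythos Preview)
The paper does not supply its own proof of this proposition; it simply cites \cite[Prop.~2.5]{WangLiWangYao2015}. Your argument is the standard one and is essentially sound, but there is a gap at the step where you assert that ``the boundedness assumption together with the existence of subgradients places $\{p_k\}$ inside a compact subset of $\inner(\dom f)$,'' and that this set can be thickened by some $r>0$ while remaining in $\inner(\dom f)$. The existence of $s_k \in \partial f(p_k)$ does not force $p_k \in \inner(\dom f)$, and even if every $p_k$ is interior, the closure of the sequence may meet $\partial(\dom f)$. Without that containment you cannot invoke local boundedness of $f$ on $K'$. In fact, with only the standing hypotheses (proper, convex, lsc $f$ on a Hadamard manifold) the statement is false: on $\MC=\R$ take $f(x)=-\sqrt{1-x^2}$ for $|x|\leq 1$ and $+\infty$ otherwise, and $p_k = 1-1/k$; then $\{p_k\}$ is bounded, each $p_k\in\inner(\dom f)$, and $s_k=f'(p_k)=p_k/\sqrt{1-p_k^2}\to\infty$.

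Your proof becomes correct once one adds the hypothesis that the closure of $\{p_k\}$ lies in $\inner(\dom f)$, which is automatic if $f$ is finite on all of $\MC$ and is exactly the situation in the paper's application (the iterates are projected onto a compact $\XC\subset\inner(\dom f)$). This is almost certainly the setting in the cited reference. Under that hypothesis, your compactness argument yields $K'\subset\inner(\dom f)$, continuity of $f$ on $\inner(\dom f)$ gives the bound $|f|\leq M$ on $K'$, and the test-point trick $q_k=\exp_{p_k}(r\,s_k/\|s_k\|_{p_k})$ finishes the proof exactly as you wrote.
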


Throughout this section, we let $\kappa$ be a lower bound on the sectional curvature of $\MC$ and define%
\begin{equation*}
	\hat{\kappa} := \sqrt{|\kappa|}.%
\end{equation*}
The following lemma plays an important role in the next subsections. Its proof will be omitted here, but it can be obtained, with some minor technical adjustments, by using Toponogov's theorem~\cite[p.~161, Thm.~4.2]{Sakai1996} and following the ideas of~\cite[Lem.~3.2]{WangLiWangYao2015}.%

\begin{lemma}\label{lem:comp}
Let $\MC$ be a Hadamard manifold with sectional curvature $\geq \kappa$. Let $p,q\in \MC$, $p\neq q$, $v\in T_p\MC$, $v\neq 0$, and ${\gamma}:[0,\infty)\to \MC$ be defined by ${\gamma}(t):=\exp_p\left(-tv\right/\|v\|)$. Then, for any $t\in[0,\infty)$, it holds that%
\begin{equation*}
	d^2({\gamma}(t),q)\leq d^2(p,q) +
	\frac{\sinh\left(t \hat{\kappa} \right)}{\hat{\kappa}} \left(\frac{\hat{\kappa}d(p,q)}{\tanh\left(\hat{\kappa}d(p,q)\right)}t+
	2\frac{\left\langle v,\log_pq  \right\rangle}{\left\|v\right\|}\right).%
\end{equation*}
\end{lemma}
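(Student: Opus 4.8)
\textbf{Plan for proving Lemma \ref{lem:comp}.} The statement is a comparison inequality for the squared distance along a unit-speed geodesic ray emanating from $p$ in the direction $-v/\|v\|$, on a Hadamard manifold whose sectional curvature is bounded below by $\kappa$. The natural tool, as the excerpt already indicates, is Toponogov's theorem applied to a suitable geodesic triangle, together with the corresponding law of cosines in the model space of constant curvature $\kappa = -\hat\kappa^2$. The plan is to form the geodesic triangle with vertices $p$, $q$ and $\gamma(t)$, whose side lengths are $a := d(p,q)$, $b := d(p,\gamma(t)) = t$ (since $\gamma$ has unit speed) and $c := d(\gamma(t),q)$, and whose angle at $p$, call it $\omega$, satisfies $\cos\omega = \langle -v/\|v\|,\log_p q\rangle / d(p,q)$ by definition of the Riemannian inner product and the exponential map.

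\textbf{Key steps.} First I would recall the hyperbolic law of cosines in the space form $M_\kappa$ of curvature $-\hat\kappa^2$: a comparison triangle with the same two side lengths $a$, $t$ and included angle $\omega$ has third side $\bar c$ satisfying
\begin{equation*}
  \cosh(\hat\kappa \bar c) = \cosh(\hat\kappa a)\cosh(\hat\kappa t) - \sinh(\hat\kappa a)\sinh(\hat\kappa t)\cos\omega.
\end{equation*}
By Toponogov's theorem (the hinge version), since the sectional curvature of $\MC$ is $\geq \kappa$, we have $d(\gamma(t),q) = c \leq \bar c$, hence $\cosh(\hat\kappa c) \leq \cosh(\hat\kappa \bar c)$. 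Second, I would substitute $\cos\omega = -\langle v,\log_p q\rangle/(\|v\| d(p,q))$ and rearrange the resulting inequality so as to isolate $\cosh(\hat\kappa c) - \cosh(\hat\kappa a)$. Third — and this is where the bulk of the elementary work lies — I would convert the $\cosh$-inequality into the claimed $d^2$-inequality. This uses the convexity estimate $\cosh(u) - 1 \geq u^2/2$, equivalently $\hat\kappa^2 c^2 \leq 2(\cosh(\hat\kappa c) - 1)$, to pass from $c^2$ to $\cosh(\hat\kappa c)$ on the left, and in the other direction the inequality $\cosh(u) - 1 \leq (u^2/2)\cdot(\sinh u / u)\cdot(\text{something})$ type bounds, or more directly the identities $\cosh(\hat\kappa a)\cosh(\hat\kappa t) - \cosh(\hat\kappa a) = \cosh(\hat\kappa a)(\cosh(\hat\kappa t)-1)$ and the monotonicity bound $\cosh(\hat\kappa t) - 1 \leq \frac{\hat\kappa t}{2}\sinh(\hat\kappa t)$, together with $\sinh(\hat\kappa a)/a \leq \hat\kappa\cosh(\hat\kappa a)$ or the cleaner $\sinh(\hat\kappa a)/(a\hat\kappa) = \hat\kappa a/\tanh(\hat\kappa a) \cdot (\text{correction})$, to produce exactly the coefficient $\frac{\hat\kappa d(p,q)}{\tanh(\hat\kappa d(p,q))}$ in front of the linear-in-$t$ term and the factor $2$ in front of the inner-product term, all multiplied by the overall factor $\sinh(\hat\kappa t)/\hat\kappa$.

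\textbf{Main obstacle.} The conceptual content — Toponogov plus the model-space law of cosines — is immediate; the genuine difficulty is entirely in the bookkeeping of the hyperbolic-trigonometric estimates needed to extract precisely the stated closed form with the factor $\hat\kappa d(p,q)/\tanh(\hat\kappa d(p,q))$ rather than some cruder bound. One must choose the chain of elementary inequalities carefully so that equality is approached in the appropriate limits (e.g.\ $t \to 0$, or $\kappa \to 0$ recovering the Euclidean parallelogram-type identity $d^2(\gamma(t),q) \leq d^2(p,q) + t^2 + 2t\langle v,\log_p q\rangle/\|v\|$), since a careless estimate would either fail to close or would not reduce correctly in these limits. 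This is precisely the ``minor technical adjustments'' to the argument of \cite[Lem.~3.2]{WangLiWangYao2015} alluded to in the statement, and it is the reason the authors elect to omit the full computation; in a complete write-up I would carry it out by reducing everything to the single scalar inequality obtained after substituting the law of cosines and then verifying it by elementary calculus in the variables $\hat\kappa a$, $\hat\kappa t$ and $\cos\omega$.
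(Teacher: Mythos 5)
Your overall strategy — geodesic triangle $p,\,q,\,\gamma(t)$, hyperbolic law of cosines in the model space $M_\kappa$, Toponogov's hinge comparison $\cosh(\hat{\kappa}\,d(\gamma(t),q)) \leq \cosh(\hat{\kappa}a)\cosh(\hat{\kappa}t) - \sinh(\hat{\kappa}a)\sinh(\hat{\kappa}t)\cos\omega$ with $a = d(p,q)$ and $\cos\omega = -\langle v,\log_p q\rangle/(\|v\|a)$ — is exactly what the paper gestures at (it cites Toponogov and refers to the computation of \cite{WangLiWangYao2015} rather than writing it out), so you have identified the right framework.

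However, the chain of elementary estimates you sketch for converting the $\cosh$-inequality into the stated $d^2$-inequality has a genuine gap. You propose using $\cosh u - 1 \geq u^2/2$ to pass from $c^2$ to $\cosh(\hat{\kappa}c)$, giving $c^2 \leq \frac{2}{\hat{\kappa}^2}(\cosh(\hat{\kappa}c)-1)$. But then you must subtract $a^2$, and since $\cosh(\hat{\kappa}a)-1 \geq \hat{\kappa}^2 a^2/2$ one has $\frac{2}{\hat{\kappa}^2}(\cosh(\hat{\kappa}c)-1) - a^2 \geq \frac{2}{\hat{\kappa}^2}(\cosh(\hat{\kappa}c)-\cosh(\hat{\kappa}a))$: the replacement of $-\cosh(\hat{\kappa}a)$ by $-1-\hat{\kappa}^2a^2/2$ goes in the wrong direction, and the residual term $\frac{2}{\hat{\kappa}^2}(\cosh(\hat{\kappa}a)-1) - a^2 \geq 0$ has to be absorbed somewhere; with the coefficient $\frac{\sinh(\hat{\kappa}a)}{\hat{\kappa}}$ that Toponogov puts in front of $\cos\omega$ being larger than the target coefficient $a$, this does not close when $\cos\omega < 0$. (Your side remark that $\sinh(\hat{\kappa}a)/(a\hat{\kappa})$ equals $\hat{\kappa}a/\tanh(\hat{\kappa}a)$ up to a correction is also off: one is the reciprocal of $\hat{\kappa}a/\sinh(\hat{\kappa}a)$, the other is $\hat{\kappa}a\cosh(\hat{\kappa}a)/\sinh(\hat{\kappa}a)$.) The missing idea is to exploit the \emph{concavity and monotonicity of} $g(u) := \mathrm{arccosh}^2(u)$ on $[1,\infty)$: writing $\hat{\kappa}^2 c^2 = g(\cosh(\hat{\kappa}c))$ and $\hat{\kappa}^2 a^2 = g(\cosh(\hat{\kappa}a))$, Toponogov plus monotonicity give $\hat{\kappa}^2(c^2-a^2) \leq g(\Lambda) - g(\cosh(\hat{\kappa}a))$ with $\Lambda$ the model-space right-hand side, and concavity linearizes this as $\leq g'(\cosh(\hat{\kappa}a))\,(\Lambda - \cosh(\hat{\kappa}a))$ where $g'(\cosh(\hat{\kappa}a)) = 2\hat{\kappa}a/\sinh(\hat{\kappa}a)$. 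Expanding $\Lambda - \cosh(\hat{\kappa}a) = \cosh(\hat{\kappa}a)(\cosh(\hat{\kappa}t)-1) - \sinh(\hat{\kappa}a)\sinh(\hat{\kappa}t)\cos\omega$ then produces \emph{exactly} the factor $\hat{\kappa}a/\tanh(\hat{\kappa}a)$ and makes the $\cos\omega$-term match on the nose; the only remaining step is the scalar inequality $2(\cosh(\hat{\kappa}t)-1) \leq \hat{\kappa}t\sinh(\hat{\kappa}t)$, equivalently $\tanh(\hat{\kappa}t/2)\leq \hat{\kappa}t/2$, which you do correctly identify. So the structure of your argument is right, but you should replace the $\cosh u - 1 \geq u^2/2$ step by the concavity argument for $\mathrm{arccosh}^2$; as written your sketch does not close.
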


\subsection{Inexact projected subgradient method}\label{sec:subgradient}

We denote by $f^*\coloneqq \inf_{ p \in \XC }f(p)$ the {\it optimal value} of problem \eqref{eq:OptP} and by $\Omega^*\coloneqq \{p\in \XC\colon f(p)=f^*\}$ its {\it solution set}. Since $f$ is lower semicontinuous and $\XC$ is compact, it is not difficult to see that $\Omega^* \neq \emptyset$.%

\subsubsection{Algorithm}

We propose to solve problem \eqref{eq:OptP} with the \emph{inexact Riemannian subgradient algorithm}, stated below.%


\begin{algorithm}[H]
	\begin{description}
		\item[ Step 0.] Let $p_0 \in \XC \subset \inner(\dom f)$. Set $k=0$.
		\item[ Step 1.] Choose a step size $t_k>0$ and compute%
		\begin{equation} \label{eq:SubGradMethod}
			 p_{k+1} = \PC_{\XC}(\tilde{p}_{k+1}), \qquad \tilde{p}_{k+1} := \exp_{p_k}\left(-t_k \frac{v_k}{\|v_k\|}\right),%
		\end{equation}
	   where $v_k \coloneqq s_k + u_k$ ($v_k\neq 0$) is an approximation for some $s_k\in \partial f(p_k)$.
		\item[ Step 2.] Set $k \leftarrow k+1$ and proceed to Step~1.%
	\end{description}
	\caption{}
	\label{alg:subgradient}
\end{algorithm}

\begin{lemma}\label{lem:MainIneq.exo}
Let $\sigma>0$ and $\zeta \coloneqq D\sinh\left(\hat{\kappa}\sigma\right)/(\sigma\tanh\left(\hat{\kappa}D\right))$. If $\sup_kt_k\leq \sigma$, then%
\begin{equation}\label{eq:lem.eq2.lemli}
  d^2(p_{k+1},q)\leq d^2(p_k,q) + \zeta t_k^2 + 2\frac{\sinh\left(\hat{\kappa}t_k\right)}{\hat{\kappa}} \cdot \frac{f(q) - f(p_k) + \|u_k\| D}{\left\|u_k + s_k\right\|}%
\end{equation}
for any $q \in \XC$ and $k\in\N_0$.
\end{lemma}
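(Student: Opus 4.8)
The plan is to apply Lemma~\ref{lem:comp} with the specific choice $v = v_k = s_k + u_k$, $p = p_k$, and an arbitrary $q \in \XC$, and then to bound the various terms. First I would note that by the definition of $\tilde p_{k+1}$ in \eqref{eq:SubGradMethod}, we have $\tilde p_{k+1} = \gamma(t_k)$ where $\gamma(t) := \exp_{p_k}(-tv_k/\|v_k\|)$ is exactly the curve appearing in Lemma~\ref{lem:comp}. Hence that lemma gives
\begin{equation*}
  d^2(\tilde p_{k+1},q) \leq d^2(p_k,q) + \frac{\sinh(\hat\kappa t_k)}{\hat\kappa}\left(\frac{\hat\kappa d(p_k,q)}{\tanh(\hat\kappa d(p_k,q))}\,t_k + 2\frac{\langle v_k,\log_{p_k}q\rangle}{\|v_k\|}\right).
\end{equation*}
Since $\PC_{\XC}$ is non-expansive by \eqref{eq:nonexp-proj} and $q \in \XC$ implies $\PC_{\XC}(q) = q$, we have $d(p_{k+1},q) = d(\PC_{\XC}(\tilde p_{k+1}),\PC_{\XC}(q)) \leq d(\tilde p_{k+1},q)$, so the same upper bound holds for $d^2(p_{k+1},q)$.

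Next I would estimate the first term inside the parentheses. The function $t\mapsto t/\tanh(t)$ is non-decreasing on $(0,\infty)$, and since $p_k,q \in \XC$ we have $d(p_k,q) \leq D$; therefore $\hat\kappa d(p_k,q)/\tanh(\hat\kappa d(p_k,q)) \leq \hat\kappa D/\tanh(\hat\kappa D)$. Moreover $\sinh$ is non-decreasing and $t_k \leq \sigma$, so $\sinh(\hat\kappa t_k) \leq (\sinh(\hat\kappa\sigma)/\sigma)\,t_k$ (using that $t\mapsto \sinh(\hat\kappa t)/t$ is non-decreasing, hence $\sinh(\hat\kappa t_k)/t_k \le \sinh(\hat\kappa\sigma)/\sigma$). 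Combining these,
\begin{equation*}
  \frac{\sinh(\hat\kappa t_k)}{\hat\kappa}\cdot\frac{\hat\kappa d(p_k,q)}{\tanh(\hat\kappa d(p_k,q))}\,t_k \leq \frac{\sinh(\hat\kappa\sigma)}{\sigma}\,t_k\cdot\frac{D}{\tanh(\hat\kappa D)}\,t_k = \zeta\, t_k^2,
\end{equation*}
which produces the $\zeta t_k^2$ term.

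For the remaining term I would use the subgradient inequality. Writing $v_k = s_k + u_k$ with $s_k \in \partial f(p_k)$, the definition \eqref{eq:defSugrad} gives $\langle s_k,\log_{p_k}q\rangle \leq f(q) - f(p_k)$, and by Cauchy--Schwarz $\langle u_k,\log_{p_k}q\rangle \leq \|u_k\|\,\|\log_{p_k}q\| = \|u_k\|\,d(p_k,q) \leq \|u_k\|\,D$. Adding these,
\begin{equation*}
  \langle v_k,\log_{p_k}q\rangle \leq f(q) - f(p_k) + \|u_k\|\,D.
\end{equation*}
Substituting this into $2\frac{\sinh(\hat\kappa t_k)}{\hat\kappa}\cdot\frac{\langle v_k,\log_{p_k}q\rangle}{\|v_k\|}$ and recalling $\|v_k\| = \|u_k + s_k\|$ yields the last term of \eqref{eq:lem.eq2.lemli}, completing the estimate. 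The one point requiring a little care — and the main, though modest, obstacle — is the monotonicity argument bounding $\sinh(\hat\kappa t_k)$: one must be careful whether the factor multiplying it is of one sign, since the bracketed expression combines the nonnegative distance term with the possibly negative subgradient term. The clean way around this is to bound the two contributions inside the parentheses separately, as above, rather than factoring out $\sinh(\hat\kappa t_k)/\hat\kappa$ first; the distance term is nonnegative so replacing $\sinh(\hat\kappa t_k)$ by its upper bound is legitimate there, while the subgradient term keeps the factor $\sinh(\hat\kappa t_k)/\hat\kappa$ intact exactly as it appears in the statement. No sign assumption on $f(q)-f(p_k)+\|u_k\|D$ is needed.
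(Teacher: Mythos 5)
Your proposal is correct and follows essentially the same route as the paper: apply Lemma~\ref{lem:comp} with $v = s_k + u_k$, invoke the non-expansiveness of $\PC_{\XC}$, treat the two terms inside the parentheses separately (the ``distance'' term via the monotonicity of $\sinh(t)/t$ and $t/\tanh(t)$ to produce $\zeta t_k^2$, and the ``subgradient'' term via \eqref{eq:defSugrad} and Cauchy--Schwarz together with $d(p_k,q)\le D$). Your closing caveat about bounding the two contributions separately rather than factoring out $\sinh(\hat\kappa t_k)/\hat\kappa$ is well taken, and is in fact exactly how the paper's proof is organized.
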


\begin{proof}
Take $q \in \XC$ and $k \in \N_0$. Applying Lemma~\ref{lem:comp} with $p=p_k$, $v=s_k + u_k$ and $t=t_k$, we obtain%
\begin{align*}
		&d^2(\tilde{p}_{k+1},q) \leq d^2(p_k,q) \\
		&+
		\frac{\sinh\left(\hat{\kappa}t_k\right)}{\hat{\kappa}}\left(\frac{\hat{\kappa} d(p_k,q)}{\tanh\left(\hat{\kappa} d(p_k,q)\right)} t_k +
		2\frac{\left\langle s_k,\log_{p_k}q \right\rangle + \left\langle u_k,\log_{p_k}q \right\rangle}{\left\|s_k + u_k\right\|}\right),%
\end{align*}
where $\tilde{p}_{k+1}$ is defined in \eqref{eq:SubGradMethod}. Using \eqref{eq:nonexp-proj}, \eqref{eq:defSugrad} and the Cauchy-Schwarz inequality, it follows that%
\begin{multline*}
	d^2(p_{k+1},q)\leq d^2(p_k,q) +
		\frac{\sinh\left(\hat{\kappa}t_k\right)}{\hat{\kappa}t_k} \cdot \frac{\hat{\kappa}d(p_k,q)}{\tanh\left(\hat{\kappa} d(p_k,q)\right)}t_k^2 \\
		+ 2\frac{\sinh\left(\hat{\kappa}t_k\right)}{\hat{\kappa}} \cdot \frac{f(q) - f(p_k) + \|u_k\|d(p_k,q)}{\left\|s_k +u_k\right\|} .
\end{multline*}
The maps $(0,\infty) \ni t \mapsto \sinh(t)/t$ and $(0,\infty) \ni t \mapsto t/\tanh(t)$ are positive and increasing. Thus, the last inequality above implies%
\begin{multline*}
	d^2(p_{k+1},q)\leq d^2(p_k,q) +
		\frac{\sinh\left(\hat{\kappa}\sigma \right)}{\hat{\kappa}\sigma} \cdot \frac{\hat{\kappa}D}{\tanh\left(\hat{\kappa}D\right)}t_k^2 \\
		+	2\frac{\sinh\left(\hat{\kappa}t_k\right)}{\hat{\kappa}} \cdot \frac{f(q) - f(p_k) + \|u_k\|D}{\left\|s_k + u_k\right\|}.%
\end{multline*}
It is not difficult to see that this inequality concurs with \eqref{eq:lem.eq2.lemli}. Therefore, the proof is complete.
\end{proof}

\subsection{Convergence analysis}

\subsubsection{Exogenous step size}

In this subsection, we assume that the sequence $\{p_k\}$ is generated by Algorithm~\ref{alg:subgradient} with exogenous step size, i.e., $t_k$ satisfies%
\begin{equation}\label{ExogenousStepsize}
	t_k > 0, \qquad \qquad \sum_{k=0}^{\infty}t_k = \infty, \qquad \qquad  \sum_{k=0}^{\infty}t_k^2 < \infty.%
\end{equation}

\begin{definition}\label{def:deltasolution}
For any $\delta > 0$, a point $q \in \XC$ is said to be a $\delta$-solution of \eqref{eq:OptP} if $f(q) \leq f^* + \delta$. The set of all $\delta$-solutions of \eqref{eq:OptP} is denoted by $\Omega^*_{\delta}$.
\end{definition}

\begin{theorem}\label{teo.Main}
Take $\epsilon > 0$. Suppose there is $\bar{k} \in \N$ such that $\|u_k\| \leq \epsilon$ for all $k \geq \bar{k}$. Then%
\begin{equation}\label{eq:linfs}
  \liminf_{k \rightarrow \infty} f(p_k) \leq f^* + \epsilon D.%
\end{equation}
In particular, $\{p_k\}$ has a cluster point that belongs to $\Omega^*_{\epsilon D}$.
\end{theorem}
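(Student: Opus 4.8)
The plan is to argue by contradiction, with the master inequality of Lemma~\ref{lem:MainIneq.exo} as the engine. First note that $\sum_k t_k^2 < \infty$ forces $t_k \to 0$, so the step sizes are bounded; set $\sigma := \sup_k t_k \in (0,\infty)$ and let $\zeta := D\sinh(\hat\kappa\sigma)/(\sigma\tanh(\hat\kappa D))$ be the associated constant, so that Lemma~\ref{lem:MainIneq.exo} applies for every $k \in \N_0$. Fix any $q \in \Omega^*$ (which is nonempty by lower semicontinuity of $f$ and compactness of $\XC$), so that $f(q) = f^*$.

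Suppose, for contradiction, that $\liminf_{k\to\infty} f(p_k) > f^* + \epsilon D$. Then there are $\rho > 0$ and $k_0 \geq \bar{k}$ with $f(p_k) \geq f^* + \epsilon D + \rho$ for all $k \geq k_0$, whence $f(q) - f(p_k) + \|u_k\| D \leq f^* - (f^* + \epsilon D + \rho) + \epsilon D = -\rho < 0$ for such $k$. Since $\{p_k\} \subset \XC$ is bounded and $s_k \in \partial f(p_k)$, Proposition~\ref{prop:subglim} gives a constant $C > 0$ with $\|s_k\| \leq C$ for all $k$; together with $\|u_k\| \leq \epsilon$ for $k \geq \bar{k}$ this yields $\|v_k\| = \|s_k + u_k\| \leq C + \epsilon =: M$ for $k \geq \bar{k}$. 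Using that $a/b \leq a/M$ whenever $a < 0$ and $0 < b \leq M$, together with $\sinh(\hat\kappa t_k)/\hat\kappa \geq t_k$, the last term in \eqref{eq:lem.eq2.lemli} is bounded above by $-2\rho\, t_k/M$ for $k \geq k_0$, so
\begin{equation*}
  d^2(p_{k+1},q) \leq d^2(p_k,q) + \zeta\, t_k^2 - \frac{2\rho}{M}\, t_k, \qquad k \geq k_0.
\end{equation*}
Summing from $k_0$ to $K$ and using $d^2 \geq 0$ gives $\frac{2\rho}{M}\sum_{k=k_0}^{K} t_k \leq d^2(p_{k_0},q) + \zeta\sum_{k=k_0}^{K} t_k^2$; letting $K \to \infty$, the right-hand side stays finite since $\sum_k t_k^2 < \infty$, while the left-hand side diverges since $\sum_k t_k = \infty$. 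This contradiction proves \eqref{eq:linfs}.

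For the cluster point, \eqref{eq:linfs} provides a subsequence $\{p_{k_j}\}$ with $f(p_{k_j}) \to L \leq f^* + \epsilon D$; by compactness of $\XC$ we pass to a further subsequence converging to some $p_* \in \XC$, and lower semicontinuity of $f$ gives $f(p_*) \leq \liminf_j f(p_{k_j}) = L \leq f^* + \epsilon D$, i.e., $p_* \in \Omega^*_{\epsilon D}$. The main technical point to get right is the treatment of the denominator $\|v_k\|$ in \eqref{eq:lem.eq2.lemli}: one must simultaneously verify that its numerator is eventually \emph{strictly} negative — this is exactly where $\|u_k\| \leq \epsilon$ enters and produces the slack $\epsilon D$ in the conclusion — and bound $\|v_k\|$ from above (via Proposition~\ref{prop:subglim}) so that dividing by it only strengthens the negative drift. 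Everything else is the standard telescoping argument exploiting the asymmetry between $\sum_k t_k = \infty$ and $\sum_k t_k^2 < \infty$.
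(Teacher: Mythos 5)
Your proof is correct and follows essentially the same route as the paper: contradiction via Lemma~\ref{lem:MainIneq.exo}, with Proposition~\ref{prop:subglim} to bound $\|v_k\|$ from above, $\sinh(\hat\kappa t_k)/\hat\kappa \geq t_k$ to keep a linear-in-$t_k$ negative drift, and the telescoping sum against $\sum t_k = \infty$, $\sum t_k^2 < \infty$. The only cosmetic difference is that you pick $q \in \Omega^*$ and introduce the slack $\rho$ directly, whereas the paper picks a near-minimizer $q' \in \XC$ from the definition of the infimum and calls the slack $a$; your explicit remark that $\sup_k t_k < \infty$ (so Lemma~\ref{lem:MainIneq.exo} applies) fills in a point the paper leaves implicit.
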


\begin{proof}
Assume towards a contradiction that $\liminf_k f(p_k) > f^* + \epsilon D$. Then, there is $\beta > 1$ such that $\liminf_k f(p_k) > f^* + \epsilon D\beta$. Hence, using the definition of $f^*$ and $\liminf$, we know that there are $q' \in \XC$ and $k' \geq \bar{k}$ with%
\begin{equation*}
  f(q') < - \epsilon D + [\epsilon D (1-\beta) + \liminf_k f(p_k)] \leq  -\epsilon D + \inf_{k\geq k'}f(p_k).%
\end{equation*}
For better readability of the rest of the proof, we define%
\begin{equation}\label{teo:adef}
  a \coloneqq - f(q') - \epsilon D + \inf_{k\geq k'}f(p_k) > 0.%
\end{equation}
Since $\{p_k\}$ is bounded, by Proposition~\ref{prop:subglim} the sequence $\{s_k\}$ is also bounded. Considering a constant $\iota > 0$ such that $\left\|s_k\right\| \leq \iota$ for all $k\geq k'$, we have%
\begin{equation}\label{eq:teo.boundv}
  \|s_k + u_k\| \leq \|s_k\| + \|u_k\| \leq \iota + \epsilon, \qquad \forall\, k \geq k'.%
\end{equation}
On the other hand, Lemma~\ref{lem:MainIneq.exo} ensures that%
\begin{equation*}
	d^2(p_{k+1},q') \leq d^2(p_k,q') + \zeta t_k^2
	+	2t_k\frac{\sinh\left(\hat{\kappa}t_k\right)}{\hat{\kappa}t_k} \cdot \frac{f(q') - f(p_k) + \epsilon D}{\left\|u_k + s_k\right\|}, \quad k \in \N_0.%
\end{equation*}
Using \eqref{teo:adef}, \eqref{eq:teo.boundv} and the fact that the function $(0,\infty) \ni t \mapsto \sinh(t)/t$ is bounded below by $1$, the last inequality implies%
\begin{equation*}
	d^2(p_{k+1},q') \leq d^2(p_k, q') + \zeta t_k^2 -	t_k\frac{2a}{\iota + \epsilon}, \qquad k\geq k'.%
\end{equation*}
Consider $\ell \in \N$. Thus, from the last inequality, summing over $k$, we obtain%
\begin{equation*}
  \frac{2a}{\iota + \epsilon}\sum_{k=k'}^{k' + \ell}t_k \leq d^2(p_{k'},q') - d^2(p_{k' + \ell + 1},q') + \zeta \sum_{k=k'}^{k' + \ell}t^2_k \leq d^2(p_{k'},q') + \zeta\sum_{k=k'}^{k' + \ell}t^2_k.%
\end{equation*}
Since the last inequality holds for all $\ell \in \N$, by using the inequality in \eqref{ExogenousStepsize}, we arrive at a contradiction. Therefore, \eqref{eq:linfs} holds.%

Note that inequality \eqref{eq:linfs} implies that $\{f(p_k)\}$ has a monotonically decreasing subsequence $\{f(p_{k_j})\}$ such that $\lim_{j\rightarrow \infty}f(p_{k_j}) \leq f^* + \epsilon D$. Since $\{p_{k_j}\}$ is bounded, there are $p_* \in \XC$ and a subsequence $\{p_{k_{\ell}}\} \subseteq \{p_{k_j}\}$ such that $\lim_{\ell\rightarrow\infty}p_{k_\ell} = p_*$, which by the lower semicontinuity of $f$ implies $f(p_*) \leq \lim_{\ell\rightarrow\infty}f(p_{k_\ell}) \leq f^* + \epsilon D$, and then $p_* \in \Omega^*_{\epsilon D}$.
\end{proof}

The next result allows us to obtain a number of iterations $N$ that guarantees the existence of a point $p_k$ sufficiently close to an $\epsilon D$-solution of \eqref{eq:OptP}.%

\begin{theorem}\label{eq:icomp}
Take $\epsilon > 0$, $\iota > 0$, $D>0$ and $N \in \N$. Assume that $\diam(\XC) < D$. If $\|u_k\| \leq \epsilon$ and $\|s_k\| \leq \iota$ for all $k \in \{0,1,\ldots,N\}$, then%
\begin{equation}\label{eq:complexityExogenous}
  \min \left\{f(p_k) - (f^* + \epsilon D) \colon k=0, 1,\ldots, N\right\} \leq (\epsilon + \iota) \frac{D^2 + \zeta \sum_{k=0}^{N} t_k^2}{2\sum_{k=0}^{N} t_k}.%
\end{equation}
\end{theorem}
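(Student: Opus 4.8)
The plan is to apply the master estimate of Lemma~\ref{lem:MainIneq.exo} with $q$ chosen to be a minimizer of $f$ over $\XC$ --- which exists because $f$ is lower semicontinuous and $\XC$ is compact --- then telescope the resulting one-step recursion over $k = 0,1,\ldots,N$ and use nonnegativity of the Riemannian distance together with the diameter bound $\diam(\XC) < D$.

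To begin, set $m := \min\{ f(p_k) : k = 0,1,\ldots,N \}$. If $m \leq f^* + \epsilon D$ then \eqref{eq:complexityExogenous} holds trivially, since its right-hand side is nonnegative; so I may assume $c := m - (f^* + \epsilon D) > 0$. Fix a minimizer $q \in \XC$, so $f(q) = f^*$. For each $k \in \{0,\ldots,N\}$, the numerator appearing in the last term of \eqref{eq:lem.eq2.lemli} satisfies $f(q) - f(p_k) + \|u_k\| D \leq f^* - m + \epsilon D = -c < 0$, while the denominator satisfies $\|u_k + s_k\| \leq \|u_k\| + \|s_k\| \leq \epsilon + \iota$. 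Because the numerator is negative, replacing the denominator by the larger bound $\epsilon + \iota$ can only decrease the fraction; and because $\sinh(\hat{\kappa} t_k)/\hat{\kappa} \geq t_k$ (as $\sinh x \geq x$ for $x \geq 0$), multiplying that negative fraction by $\sinh(\hat{\kappa} t_k)/\hat{\kappa}$ instead of by $t_k$ can only decrease it further. Thus \eqref{eq:lem.eq2.lemli} yields the clean recursion
\[
  d^2(p_{k+1},q) \leq d^2(p_k,q) + \zeta t_k^2 - \frac{2c}{\epsilon + \iota}\, t_k, \qquad k = 0,1,\ldots,N.
\]

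Summing this inequality over $k = 0,\ldots,N$ telescopes the distance terms; dropping the nonnegative term $d^2(p_{N+1},q)$ and bounding $d^2(p_0,q) \leq \diam(\XC)^2 < D^2$ leaves
\[
  \frac{2c}{\epsilon + \iota} \sum_{k=0}^{N} t_k \leq D^2 + \zeta \sum_{k=0}^{N} t_k^2 .
\]
Since $t_k > 0$ for all $k$, dividing through by $\tfrac{2}{\epsilon+\iota}\sum_{k=0}^N t_k$ and recalling that $c = \min\{ f(p_k) - (f^* + \epsilon D) : k = 0,\ldots,N \}$ is precisely \eqref{eq:complexityExogenous}.

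I do not expect any genuine obstacle: the argument is a finite-horizon telescoping of Lemma~\ref{lem:MainIneq.exo}. The only points requiring care are the two sign-sensitive monotonicity steps --- one divides, the other multiplies, a \emph{negative} quantity, so the inequalities go in the intended direction only thanks to the sign of $c$ --- and the implicit understanding that $\zeta$ is the constant from Lemma~\ref{lem:MainIneq.exo} attached to some $\sigma \geq \max_{0 \leq k \leq N} t_k$, which guarantees that the lemma is applicable to each of the finitely many iterates involved.
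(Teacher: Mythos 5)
Your argument is correct and is essentially the paper's proof: same trivial case, same choice of $q \in \Omega^*$, same application of Lemma~\ref{lem:MainIneq.exo}, same telescoping, with the only cosmetic difference that you bound $f(p_k)$ by the minimum $m$ before summing rather than after. One small slip in wording: since the numerator is negative, enlarging the denominator to $\epsilon+\iota$ makes the fraction \emph{larger} (less negative), not smaller; the inequality chain you need is $\frac{\mathrm{num}}{\|u_k+s_k\|} \leq \frac{-c}{\|u_k+s_k\|} \leq \frac{-c}{\epsilon+\iota}$, which is exactly what holds, so your final recursion and the resulting bound are correct --- just replace ``decrease'' by ``increase'' (or ``relax the bound'') in that clause.
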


\begin{proof}
If $p_k \in \Omega^*_{\epsilon D}$ for some $k \in \{0,1,\ldots, N\}$, the definition of $\Omega^*_{\epsilon D}$ implies that $f(p_k) - (f^* + \epsilon D) \leq 0$, and \eqref{eq:complexityExogenous} is trivially satisfied. Therefore, we only need to consider the case that%
\begin{equation}\label{eq:pkoutsideomega}
	p_k \notin \Omega^*_{\epsilon D}, \qquad \forall k \in \{0,1,\ldots, N\}.%
\end{equation}
Pick $q \in \Omega^*$. Using that $\|u_k\| \leq \epsilon$ for all $k \in \{0,1,\ldots,N\}$ and applying Lemma~\ref{lem:MainIneq.exo}, we obtain%
\begin{equation*}
	d^2(p_{k+1}, q )\leq d^2(p_k, q ) + \zeta t_k^2
		+ 2 t_k	\frac{\sinh\left(\hat{\kappa}t_k\right)}{\hat{\kappa}t_k} \cdot \frac{(f^* + \epsilon D ) - f(p_k)}{\left\|u_k + s_k\right\|}%
\end{equation*}
for $k = 0,1,\ldots,N$. By \eqref{eq:pkoutsideomega}, $(f^* + \epsilon D) - f(p_k) < 0$ holds for any $k \in \{0,1,\ldots,N\}$. Hence, using that the function $(0,\infty) \ni t \mapsto \sinh(t)/t$ is bounded below by $1$, we obtain%
\begin{equation*}
  2t_k\frac{f(p_k) - (f^* + \epsilon D)}{\|u_k + s_k\|} \leq d^2(p_k, q ) - d^2(p_{k+1},q) + \zeta t_k^2 , \quad k=0,1,\ldots,N.%
\end{equation*}
Summing over $k=0,1,\ldots,N$, we obtain%
\begin{equation*}
  2\sum_{k=0}^N t_k \frac{f(p_k) - (f^*  +  \epsilon D)}{\|u_k + s_k\|} \leq d^2(p_0, q)- d^2(p_{N+1}, q) + \zeta \sum_{k=0}^N t_k^2.%
\end{equation*}
Since $\|u_k + s_k\| \leq \| u_k\| + \|s_k\| < \epsilon + \iota$ holds for any $k \in \{0,1,\ldots,N\}$, the last inequality implies%
\begin{equation*}
  \frac{2}{\epsilon + \iota} \min\left\{f(p_k) - (f^* + \epsilon D)\colon  k=0,1,\ldots,N\right\}\sum_{k=0}^N t_k \leq D^2 + \zeta\sum_{k=0}^N t_k^2,%
\end{equation*}
which is	equivalent to the desired inequality.
\end{proof}

\subsubsection{Constant step size}

Here, the sequence $\{p_k\}$ is generated by Algorithm~\ref{alg:subgradient} with constant step size, i.e., $t_k$ satisfies%
\begin{equation}\label{ConstantStepsize}
	\bar{t} = \mbox{positive constant}, \qquad t_k = \bar{t}, \quad \forall k.%
\end{equation}
Since Lemma~\ref{lem:MainIneq.exo} holds for $t_k$ given in \eqref{ConstantStepsize}, the proof of the next result is very similar to the proof of Theorem~\ref{eq:icomp}, so it will be omitted. In \cite[Thm.~9]{ZhangSra2016}, the authors propose a version of this theorem using the exact subgradient.%

\begin{theorem}\label{eq:icomp.const}
Take $\epsilon > 0$, $\iota > 0$ and $N \in \N$. If $\|u_k\| \leq \epsilon$ and $\|s_k\| \leq \iota$ for all $k \in \{0,1,\ldots,N-1\}$, then%
\begin{equation}\label{eq:complexityConstant}
	\min \left\{  f(p_k) - (f^* + \epsilon D)   \colon k=0, 1,\ldots, N-1\right\}
	\leq (\epsilon + \iota) \frac{D^2 + \zeta N \bar{t}^2}{2 N \bar{t}}.%
\end{equation}
In particular, if $\bar{t} = D/(\sqrt{\zeta N})$, we have%
\begin{equation}\label{eq:tol.constant.stepsize}
	\min \left\{f(p_k) - (f^* + \epsilon D) \colon k=0,1,\ldots, N-1\right\} \leq (\epsilon + \iota) \frac{D\sqrt{\zeta}}{\sqrt{N}}.%
\end{equation}
\end{theorem}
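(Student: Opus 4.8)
The plan is to follow the template of the proof of Theorem~\ref{eq:icomp}, the only structural difference being that the telescoping sum now runs over a constant step size $\bar t$ rather than a summable sequence. First I would dispose of the trivial case: if $p_k \in \Omega^*_{\epsilon D}$ for some $k \in \{0,1,\ldots,N-1\}$, then by Definition~\ref{def:deltasolution} we have $f(p_k) - (f^* + \epsilon D) \le 0$, so the left-hand side of \eqref{eq:complexityConstant} is $\le 0$ and the estimate holds vacuously. Hence from now on I assume $p_k \notin \Omega^*_{\epsilon D}$, i.e.\ $f(p_k) > f^* + \epsilon D$, for every $k \in \{0,1,\ldots,N-1\}$.

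Next, pick an arbitrary $q \in \Omega^*$ (nonempty as noted before), so $f(q) = f^*$, and apply Lemma~\ref{lem:MainIneq.exo} with $t_k = \bar t$ and $\sigma = \bar t$ (so that $\zeta$ is exactly the constant in the statement), together with $\|u_k\| \le \epsilon$. This yields, for each $k \in \{0,1,\ldots,N-1\}$,
\begin{equation*}
  d^2(p_{k+1},q) \le d^2(p_k,q) + \zeta \bar t^2 + 2\frac{\sinh(\hat\kappa \bar t)}{\hat\kappa}\cdot\frac{(f^* + \epsilon D) - f(p_k)}{\|u_k + s_k\|}.
\end{equation*}
Since $p_k \notin \Omega^*_{\epsilon D}$, the numerator $(f^* + \epsilon D) - f(p_k)$ is strictly negative, and because $t \mapsto \sinh(t)/t$ is bounded below by $1$ we have $\sinh(\hat\kappa\bar t)/\hat\kappa \ge \bar t$; replacing the factor $\sinh(\hat\kappa\bar t)/\hat\kappa$ by the smaller quantity $\bar t$ therefore enlarges the (negative) last term, so that after rearranging
\begin{equation*}
  2\bar t\,\frac{f(p_k) - (f^* + \epsilon D)}{\|u_k + s_k\|} \le d^2(p_k,q) - d^2(p_{k+1},q) + \zeta \bar t^2 .
\end{equation*}

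Summing this over $k = 0,1,\ldots,N-1$, the distance terms telescope; bounding $d^2(p_0,q) \le D^2$ (both points lie in $\XC$ and $\diam(\XC) \le D$) and dropping the nonnegative term $d^2(p_N,q)$, I obtain
\begin{equation*}
  2\bar t \sum_{k=0}^{N-1}\frac{f(p_k) - (f^* + \epsilon D)}{\|u_k + s_k\|} \le D^2 + \zeta N \bar t^2 .
\end{equation*}
Each numerator here is positive and $\|u_k + s_k\| \le \|u_k\| + \|s_k\| \le \epsilon + \iota$, so each summand is at least $(f(p_k) - (f^*+\epsilon D))/(\epsilon+\iota)$; bounding the sum from below by $N$ times its smallest term and rearranging gives precisely \eqref{eq:complexityConstant}. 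For the ``in particular'' claim, substituting $\bar t = D/\sqrt{\zeta N}$ makes $\zeta N \bar t^2 = D^2$ and $2N\bar t = 2D\sqrt{N/\zeta}$, so the right-hand side of \eqref{eq:complexityConstant} collapses to $(\epsilon+\iota)\,D\sqrt{\zeta}/\sqrt{N}$, which is \eqref{eq:tol.constant.stepsize}.

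I do not expect a genuine obstacle: the argument is a routine adaptation of Theorem~\ref{eq:icomp}. The only points needing a little care are the sign bookkeeping when replacing $\sinh(\hat\kappa\bar t)/\hat\kappa$ by $\bar t$ (valid exactly because we have reduced to the case where the relevant numerator is negative), keeping $\zeta$ consistent with the choice $\sigma = \bar t$ in Lemma~\ref{lem:MainIneq.exo}, and observing that $\|u_k + s_k\| = \|v_k\| \neq 0$ by the algorithm's hypothesis, so every division is legitimate.
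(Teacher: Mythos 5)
Your proof is correct and takes exactly the route the paper intends: the authors omit the proof of Theorem~\ref{eq:icomp.const} precisely because it is the argument of Theorem~\ref{eq:icomp} with $t_k \equiv \bar t$, $\sigma = \bar t$ in Lemma~\ref{lem:MainIneq.exo}, a non-telescoping constant term $\zeta\bar t^2$ accumulating to $\zeta N\bar t^2$, and the same sign bookkeeping for replacing $\sinh(\hat\kappa\bar t)/\hat\kappa$ by $\bar t$. Your derivation of \eqref{eq:complexityConstant} and the substitution $\bar t = D/\sqrt{\zeta N}$ yielding \eqref{eq:tol.constant.stepsize} both check out.
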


\subsubsection{Polyak step size}

Take $\epsilon > 0$. Define the inexact version of the Polyak step size by%
\begin{equation}\label{StepsizePolyak}
	t_k = \alpha\frac{f(p_k)-f^*-\epsilon D}{\left\|v_k\right\|},\qquad 0 < \alpha < 2\frac{\tanh\left(\hat{\kappa}D \right)}{\hat{\kappa }D}.%
\end{equation}

\begin{theorem}\label{teo.complexity.polyak}
Take $\iota > 0$ and $N \in \N$. If $\|u_k\| \leq \epsilon$, $\|s_k\| \leq \iota$ and $t_k \geq 0$ for all $k \in \{0,1,\ldots,N-1\}$, then%
\begin{equation} \label{eq:complexityPolyak}
  \min\left\{f(p_k) - (f^* + \epsilon D) \colon k=0,1,\ldots, N-1\right\} \leq \frac{1}{\sqrt{\Gamma}\sqrt{N}},%
\end{equation}
where $\Gamma$ is the constant defined by%
\begin{equation}\label{eq:Gamma.def}
  \Gamma = \frac{1}{(\epsilon + \iota)^2D^2}\left(2\alpha - \frac{\hat{\kappa}D}{\tanh\left(\hat{\kappa}D\right)}\alpha^2\right).%
\end{equation}
\end{theorem}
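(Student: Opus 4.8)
The plan is to mimic the proofs of Theorems~\ref{eq:icomp} and \ref{eq:icomp.const}: start from the master inequality of Lemma~\ref{lem:MainIneq.exo}, substitute the Polyak step size \eqref{StepsizePolyak}, telescope, and then optimize. First I would pick $q \in \Omega^*$ (so $f(q) = f^*$), and assume, as in the earlier proofs, that $p_k \notin \Omega^*_{\epsilon D}$ for all $k \in \{0,\ldots,N-1\}$, since otherwise the minimum on the left of \eqref{eq:complexityPolyak} is $\le 0$ and the bound is trivial (note $\Gamma > 0$ by the constraint on $\alpha$). Under this assumption $f(p_k) - f^* - \epsilon D > 0$, so the step sizes $t_k$ are genuinely positive and $t_k \le D / \|v_k\|$ up to the factor $\alpha \hat\kappa D / \tanh(\hat\kappa D) < 2$; in particular $\sup_k t_k$ is bounded (using $\|v_k\| \ge$ some positive quantity—here one must be a little careful, but $f(p_k) - f^* - \epsilon D$ is bounded above since $\XC$ is compact and $f$ continuous on it, and $\|v_k\| = \|s_k + u_k\|$, which is bounded), so Lemma~\ref{lem:MainIneq.exo} applies with an appropriate $\sigma$.

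The key computation: apply Lemma~\ref{lem:MainIneq.exo} with $q$ as above to get
\begin{equation*}
  d^2(p_{k+1},q) \le d^2(p_k,q) + \zeta t_k^2 + 2 t_k \frac{\sinh(\hat\kappa t_k)}{\hat\kappa t_k} \cdot \frac{f^* - f(p_k) + \epsilon D}{\|v_k\|},
\end{equation*}
where I have replaced $\|u_k\| D$ by $\epsilon D$ using $\|u_k\| \le \epsilon$, noting the numerator $f^* - f(p_k) + \epsilon D < 0$ so this weakening is valid. Bounding $\sinh(\hat\kappa t_k)/(\hat\kappa t_k) \ge 1$ and substituting $t_k \|v_k\| = \alpha (f(p_k) - f^* - \epsilon D)$ from \eqref{StepsizePolyak}, the cross term becomes $-2\alpha (f(p_k) - f^* - \epsilon D) \cdot (f(p_k) - f^* - \epsilon D)/\|v_k\|^2 \cdot \|v_k\| \cdot \tfrac{1}{\|v_k\|}$—more precisely, writing $r_k := f(p_k) - f^* - \epsilon D > 0$, one has $t_k = \alpha r_k/\|v_k\|$, so $\zeta t_k^2 = \zeta \alpha^2 r_k^2/\|v_k\|^2$ and the cross term is at most $-2\alpha r_k^2/\|v_k\|^2$. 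However, the correct bound for the $t_k^2$ term should use the refined constant: instead of the crude $\zeta$, I would re-examine Lemma~\ref{lem:MainIneq.exo}'s derivation to see that the coefficient of $t_k^2$ can be taken as $\tfrac{\hat\kappa D}{\tanh(\hat\kappa D)}$ (this is the factor appearing in $\Gamma$ in \eqref{eq:Gamma.def}), giving
\begin{equation*}
  d^2(p_{k+1},q) \le d^2(p_k,q) - \left(2\alpha - \frac{\hat\kappa D}{\tanh(\hat\kappa D)}\alpha^2\right)\frac{r_k^2}{\|v_k\|^2}.
\end{equation*}
Then $\|v_k\| = \|s_k + u_k\| \le \epsilon + \iota$, and $r_k \ge \min_{j < N} r_j =: m$, so the bracketed decrease is at least $(2\alpha - \tfrac{\hat\kappa D}{\tanh(\hat\kappa D)}\alpha^2) m^2/(\epsilon+\iota)^2$. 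Telescoping from $k=0$ to $N-1$ gives $N \cdot (2\alpha - \tfrac{\hat\kappa D}{\tanh(\hat\kappa D)}\alpha^2) m^2/(\epsilon+\iota)^2 \le d^2(p_0,q) \le D^2$, i.e. $N \Gamma m^2 \le 1$, which rearranges to $m \le 1/(\sqrt{\Gamma}\sqrt{N})$, exactly \eqref{eq:complexityPolyak}.

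The main obstacle I anticipate is reconciling the coefficient of $t_k^2$: Lemma~\ref{lem:MainIneq.exo} as stated gives $\zeta = D\sinh(\hat\kappa\sigma)/(\sigma\tanh(\hat\kappa D))$, which depends on the a priori bound $\sigma$ on the step sizes and is strictly larger than $\tfrac{\hat\kappa D}{\tanh(\hat\kappa D)}$; but $\Gamma$ in \eqref{eq:Gamma.def} uses precisely the latter, smaller constant. So for the Polyak case one should \emph{not} pass through the $\sigma$-dependent $\zeta$, but rather go back one step in the proof of Lemma~\ref{lem:MainIneq.exo}—to the inequality with $\tfrac{\sinh(\hat\kappa t_k)}{\hat\kappa t_k}\cdot\tfrac{\hat\kappa d(p_k,q)}{\tanh(\hat\kappa d(p_k,q))}t_k^2$—substitute the Polyak step size there, and bound $\tfrac{\sinh(\hat\kappa t_k)}{\hat\kappa t_k}$ directly. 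This requires either using $\alpha < 2\tanh(\hat\kappa D)/(\hat\kappa D)$ to force $\hat\kappa t_k$ small enough that $\sinh(\hat\kappa t_k)/(\hat\kappa t_k) \le \hat\kappa D/\tanh(\hat\kappa D) \cdot (\text{something} \le 1)$—a short monotonicity argument—or a slightly more careful two-sided estimate. Everything else (telescoping, the sign bookkeeping that lets us replace $\|u_k\|D$ by $\epsilon D$, boundedness of $\{s_k\}$ via Proposition~\ref{prop:subglim}) is routine and parallels the two preceding theorems.
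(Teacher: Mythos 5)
You have the right target inequality and the right final telescoping, and you correctly recognize that one must bypass the $\sigma$-dependent $\zeta$ of Lemma~\ref{lem:MainIneq.exo} and go back to Lemma~\ref{lem:comp}. That matches the paper's strategy. But there is a genuine gap at the one step you yourself flag as the ``main obstacle,'' and your proposed workarounds would not close it.

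After applying Lemma~\ref{lem:comp} (and the projection, subgradient inequality, and Cauchy--Schwarz) one reaches
\begin{equation*}
  d^2(p_{k+1},q)\ \leq\ d^2(p_k,q) + \frac{\sinh(\hat{\kappa}t_k)}{\hat{\kappa}t_k}\Bigl(\frac{\hat{\kappa}D}{\tanh(\hat{\kappa}D)}\,t_k^2 \;-\; 2t_k\,\frac{f(p_k)-f^*-\epsilon D}{\|v_k\|}\Bigr),
\end{equation*}
where the factor $\sinh(\hat{\kappa}t_k)/(\hat{\kappa}t_k)\geq 1$ multiplies the \emph{entire} bracket, including the $t_k^2$ term. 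You cannot simply replace this factor by $1$ on the $t_k^2$ term (that would need an upper bound, not a lower bound), nor can you bound it from above: under Polyak steps $t_k = \alpha (f(p_k)-f^*-\epsilon D)/\|v_k\|$ the denominator $\|v_k\|$ has no a priori positive lower bound, so $t_k$, and hence $\sinh(\hat\kappa t_k)/(\hat\kappa t_k)$, is not uniformly bounded above. Your suggestion that $\alpha < 2\tanh(\hat\kappa D)/(\hat\kappa D)$ ``forces $\hat\kappa t_k$ small'' is not correct — that constraint on $\alpha$ says nothing about the size of $t_k$.

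The paper's resolution is a small but essential twist: substitute the Polyak step into the \emph{whole} bracket before touching the $\sinh$ factor. Writing $r_k := f(p_k)-f^*-\epsilon D$, the bracket becomes
\begin{equation*}
  \frac{\hat{\kappa}D}{\tanh(\hat{\kappa}D)}\,t_k^2 - \frac{2t_k\,r_k}{\|v_k\|} \;=\; \frac{r_k^2}{\|v_k\|^2}\Bigl(\frac{\hat{\kappa}D}{\tanh(\hat{\kappa}D)}\,\alpha^2 - 2\alpha\Bigr)\ \leq\ 0,
\end{equation*}
where the sign is exactly what the constraint $0<\alpha<2\tanh(\hat\kappa D)/(\hat\kappa D)$ in \eqref{StepsizePolyak} guarantees. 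Because the bracket is \emph{nonpositive}, multiplying it by $\sinh(\hat{\kappa}t_k)/(\hat{\kappa}t_k)\geq 1$ only makes it more negative, so the factor can be dropped. This yields $d^2(p_{k+1},q)\leq d^2(p_k,q) - \bigl(2\alpha - \tfrac{\hat{\kappa}D}{\tanh(\hat{\kappa}D)}\alpha^2\bigr)\tfrac{r_k^2}{\|v_k\|^2}$, after which $\|v_k\|\leq \epsilon+\iota$, the definition of $\Gamma$, and telescoping give the claim exactly as you wrote. So the missing idea is not a bound on $\sinh(\hat\kappa t_k)/(\hat\kappa t_k)$ at all, but the observation that the constraint on $\alpha$ makes the combined bracket negative, which reverses the role of the $\sinh$ factor from an obstacle into something you can freely discard.
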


\begin{proof}
Pick $q \in \Omega^*$ and $k \in \{0,1,\ldots,N-1\}$. Applying Lemma~\ref{lem:comp} with $p=p_k$, $v=v_k \coloneqq s_k + u_k$ and $t=t_k$, we obtain%
\begin{align*}
	&d^2(\tilde{p}_{k+1},q) \leq d^2(p_k,q) \\
	&+\frac{\sinh\left(\hat{\kappa}t_k\right)}{ \hat{\kappa}t_k}
		\left(\frac{\hat{\kappa}d(p_k,q)}{\tanh\left(\hat{\kappa} d(p_k,q)\right)} t^2_k +
		2t_k\frac{\left\langle  s_k ,\log_{p_k}q  \right\rangle + \left\langle u_k,\log_{p_k}q \right\rangle}{\left\|v_k\right\|}\right),%
\end{align*}
where $\tilde{p}_{k+1}$ is defined in \eqref{eq:SubGradMethod}. Hence, using \eqref{eq:nonexp-proj}, \eqref{eq:defSugrad} and the Cauchy-Schwarz inequality, it follows that%
\begin{align*}
	&d^2(p_{k+1},q) \leq d^2(p_k,q) \\
	&+	\frac{\sinh\left(\hat{\kappa}t_k\right)}{\hat{\kappa}t_k}\left(\frac{\hat{\kappa} d(p_k,q)}{\tanh\left(\hat{\kappa} d(p_k,q)\right)} t_k^2 - 2t_k\frac{f(p_k) - f^* - \|u_k\|d(p_k,q)}{\left\|v_k\right\|}\right).%
\end{align*}
The function $(0,\infty) \ni t \mapsto t/\tanh(t)$ is positive and increasing, and $d(p_k,q)\leq D$ holds. Thus, the last inequality above implies%
\begin{equation*}
	d^2(p_{k+1},q)\leq d^2(p_k,q) +
		\frac{\sinh\left(\hat{\kappa}t_k\right)}{\hat{\kappa}t_k}\left(\frac{\hat{\kappa}D}{\tanh\left(\hat{\kappa}D\right)}t_k^2 - 2t_k\frac{   f(p_k) - f^* - \epsilon D}{\left\|v_k\right\|}\right).%
\end{equation*}
Since $\sinh(\hat{\kappa}t_k)/(\hat{\kappa}t_k) \geq 1$ and $\|v_k\| \leq \epsilon + \iota $, after rearranging the terms of the last inequality and using \eqref{StepsizePolyak} and \eqref{eq:Gamma.def}, we obtain%
\begin{align*}
	\Gamma D^2 \left(f(p_k) - f^* - \epsilon D\right)^2 &\leq \Gamma D^2( \epsilon + \iota )^2 \left(
		\frac{f(p_k) - f^* - \epsilon D}{\left\|v_k\right\|}\right)^2 \\
		&\leq d^2(p_k,q) - d^2(p_{k+1},q).%
\end{align*}
Summing over $k=0,1,\ldots,N-1$, we arrive at%
\begin{equation*}
	\Gamma D^2\sum_{k=0}^{N-1}\left(f(p_k) - f^* - \epsilon D\right)^2 \leq d^2(p_0,q) - d^2(p_N,q) \leq D^2.%
\end{equation*}
Therefore,%
\begin{equation*}
  \min \left\{(f(p_k) - f^* - \epsilon D)^2 \colon k=0,1,\ldots,N-1\right\} \leq \frac{1}{\Gamma N}.%
\end{equation*}
which is equivalent to the desired inequality.
\end{proof}

\section{Some auxiliary results}\label{sec:4}

In this section, we provide some auxiliary results needed for the application of the projected inexact subgradient method to the singular value optimization problems described in Subsection \ref{subsec_problem}.%

\subsection{Projections to order intervals}

For any $p_1,p_2 \in \SC^+_n$ with $p_1 \leq p_2$, we define the order interval%
\begin{equation*}
  [p_1,p_2] := \{ p \in \SC^+_n : p_1 \leq p \leq p_2 \}.%
\end{equation*}

\begin{lemma}
Every order interval $[p_1,p_2]$ is a compact and convex subset of $\SC^+_n$.%
\end{lemma}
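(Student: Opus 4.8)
The plan is to prove the two properties separately, working with the order relation $\leq$ on $\SC_n$ and the explicit description of geodesics on $\SC_n^+$ given in the preliminaries.

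\textbf{Convexity.} I would first establish convexity, which amounts to showing that if $p,q \in [p_1,p_2]$ then the whole geodesic $\gamma_{p,q}(\theta) = p \#_\theta q$ stays in $[p_1,p_2]$. The key tool is the well-known operator-monotonicity property of the geometric mean (weighted geometric mean) on positive definite matrices: the map $(a,b) \mapsto a \#_\theta b$ is monotone in each argument with respect to $\leq$, and $a \#_\theta a = a$. Concretely, from $p_1 \leq p$ and $p_1 \leq q$ one gets $p_1 = p_1 \#_\theta p_1 \leq p \#_\theta q$, and similarly $p \#_\theta q \leq p_2 \#_\theta p_2 = p_2$. Thus $\gamma_{p,q}(\theta) \in [p_1,p_2]$ for all $\theta \in [0,1]$, so $[p_1,p_2]$ is geodesically convex. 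I would cite \cite[Ch.~6]{bhatia2009positive} for the monotonicity of the weighted geometric mean rather than reproving it.

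\textbf{Compactness.} Since $\SC_n^+$ is a manifold sitting inside the finite-dimensional vector space $\SC_n$, and the subspace topology from $\SC_n$ agrees with the manifold topology, it suffices to show $[p_1,p_2]$ is closed and bounded in $\SC_n$. Boundedness: for $p \in [p_1,p_2]$ we have $0 < p_1 \leq p \leq p_2$, and comparing traces (or using that $A \leq B$ implies $\|A\|_F \leq \sqrt{n}\,\|B\|_F$ via eigenvalue/trace bounds, since $0 \le A \le B$ gives $\lambda_i(A) \le \lambda_1(B)$) yields a uniform bound on $\|p\|_F$. Closedness \emph{as a subset of} $\SC_n^+$: if $p^{(j)} \to p$ in $\SC_n$ with $p_1 \leq p^{(j)} \leq p_2$, then the relations $p^{(j)} - p_1 \geq 0$ and $p_2 - p^{(j)} \geq 0$ pass to the limit because the cone of positive semidefinite matrices is closed, giving $p_1 \leq p \leq p_2$; in particular $p \geq p_1 > 0$, so $p \in \SC_n^+$ and hence $p \in [p_1,p_2]$. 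Therefore $[p_1,p_2]$ is a closed and bounded subset of $\SC_n$, hence compact.

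\textbf{Main obstacle.} The only genuine content is the convexity step, and there the potential pitfall is justifying the monotonicity of the weighted geometric mean $a \#_\theta b = a^{1/2}(a^{-1/2} b a^{-1/2})^\theta a^{1/2}$ in both variables; monotonicity in $b$ (for fixed $a$) is immediate from operator monotonicity of $t \mapsto t^\theta$ on $[0,\infty)$ for $\theta \in [0,1]$, while monotonicity in $a$ requires the congruence-invariance identity $(c a c\trn) \#_\theta (c b c\trn) = c(a \#_\theta b)c\trn$ together with the $b$-monotonicity. I would simply invoke these standard facts with a reference. Everything else (boundedness, closedness of the PSD cone) is routine.
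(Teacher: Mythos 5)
Your proof is correct, and it differs from the paper's in both halves in instructive ways. For convexity, the paper invokes only the midpoint operation $\#$ (citing Lim--Palfia for its joint monotonicity), writing $p_1 = p_1 \# p_1 \leq p \# q \leq p_2 \# p_2 = p_2$; strictly speaking this establishes midpoint convexity, and one needs closedness to promote it to full geodesic convexity. You instead work directly with $\#_\theta$ for all $\theta \in [0,1]$, getting geodesic convexity in one stroke, which is cleaner and avoids any implicit appeal to closedness at that stage. Your identification of the two monotonicity ingredients (operator monotonicity of $t \mapsto t^\theta$ for monotonicity in the second slot, and congruence invariance to transfer it to the first slot) is exactly the right justification. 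For compactness, the paper argues intrinsically: it notes that $\SC_n^+$ is a complete Riemannian manifold (so Hopf--Rinow applies), shows $[p_1,p_2]$ is closed, and then bounds the Riemannian distance $d(p,I)$ using \eqref{eq_snp_dist} together with the eigenvalue interlacing $\lambda_i(p) \in [\lambda_i(p_1),\lambda_i(p_2)]$. You argue extrinsically via Heine--Borel in the ambient vector space $\SC_n$, using that $\SC_n^+$ carries the subspace topology (being an open subset of $\SC_n$) and that the PSD cone is closed. Both arguments are sound; the intrinsic one has the advantage of also producing a quantitative diameter bound (which the paper later sharpens in Lemma~\ref{lem_diam}), while yours is more elementary and requires no facts about the trace metric.

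One small point to tighten: in the boundedness step, $0 \le A \le B$ gives $\lambda_i(A) \le \lambda_i(B)$ for each $i$ (Weyl), not just $\lambda_i(A) \le \lambda_1(B)$; the latter follows but the former is the cleaner statement and is what the paper cites from \cite[Cor.~I.2.1.1]{boichenko2005dimension}.
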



\begin{proof}
If $p,q \in [p_1,p_2]$, then by \cite[Prop.~4.1]{lim2014weighted} we have $p_1 = p_1 \#\, p_1 \leq p \#\, q \leq p_2 \#\, p_2 = p_2$, which shows that $[p_1,p_2]$ is convex. It is easy to see that $[p_1,p_2]$ is closed. To prove compactness, it hence suffices to show boundedness (since $\SC^+_n$ is a complete Riemannian manifold). To this end, take an arbitrary $p \in [p_1,p_2]$. By \cite[Cor.~I.2.1.1]{boichenko2005dimension}, the relation $p_1 \leq p \leq p_2$ implies $\lambda_i(p) \in [\lambda_i(p_1),\lambda_i(p_2)]$ for $i = 1,\ldots,n$. It follows that%
\begin{align*}
  d(p,I)^2 &= \sum_{i=1}^n \ln^2 \lambda_i(p) = \sum_{\lambda_i(p) < 1} \ln^2 \lambda_i(p) + \sum_{\lambda_i(p) > 1} \ln^2 \lambda_i(p) \\
	&\leq \sum_{\lambda_i(p) < 1} \ln^2 \lambda_i(p_1) + \sum_{\lambda_i(p) > 1} \ln^2 \lambda_i(p_2) \leq d(p_1,I)^2 + d(p_2,I)^2.%
\end{align*}
Here, we use that $\ln^2$ is decreasing on $(0,1]$ and increasing on $[1,\infty)$.%
\end{proof}

From now on, we only consider order intervals of the form $[\alpha I,\beta I]$ for $0 < \alpha < \beta$.%

\begin{lemma}
The projection onto the set $[\alpha I,\beta I]$, denoted by $\PC_{\alpha,\beta}$, is given by%
\begin{equation*}
  \PC_{\alpha,\beta}(p) = U_p\trn \diag(\zeta_1(p),\ldots,\zeta_n(p)) U_p,%
\end{equation*}
where $U_p$ is an orthogonal matrix chosen such that $U_ppU_p\trn$ is diagonal and%
\begin{equation*}
  \zeta_i(p) := \left\{ \begin{array}{rl}
	                                                                          \alpha & \mbox{if } \lambda_i(p) < \alpha, \\
																																						\lambda_i(p) & \mbox{if } \lambda_i(p) \in [\alpha,\beta], \\
																																						\beta & \mbox{if } \lambda_i(p) > \beta.%
																																						\end{array}\right.%
\end{equation*}
\end{lemma}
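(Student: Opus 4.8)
The plan is to verify the characterization of the projection via the standard variational inequality for projections onto closed convex sets in a Hadamard manifold: a point $q \in \XC$ equals $\PC_{\XC}(p)$ if and only if $\langle \log_q p, \log_q r \rangle_q \leq 0$ for every $r \in \XC$. So I would first set $q := U_p\trn \diag(\zeta_1(p),\ldots,\zeta_n(p)) U_p$ with $U_p$ as in the statement, check that $q \in [\alpha I, \beta I]$ (immediate from $\zeta_i(p) \in [\alpha,\beta]$), and then show this variational inequality holds. A key simplification is equivariance: the action of $\GL(n,\R)$ by congruence $(g,p) \mapsto gpg\trn$ is by isometries, and the set $[\alpha I, \beta I]$ is invariant under the subgroup $\SO(n)$ (since $g(\alpha I)g\trn = \alpha I$ for orthogonal $g$). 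Hence $\PC_{\alpha,\beta}(gpg\trn) = g\,\PC_{\alpha,\beta}(p)\,g\trn$ for $g \in \SO(n)$, which reduces the problem to the case where $p = \diag(\lambda_1,\ldots,\lambda_n)$ is already diagonal, with $U_p = I$.

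So the heart of the matter is the diagonal case: show that $q = \diag(\zeta_1,\ldots,\zeta_n)$, with $\zeta_i = \min\{\beta, \max\{\alpha, \lambda_i\}\}$, is the closest point of $[\alpha I, \beta I]$ to $p = \diag(\lambda_1,\ldots,\lambda_n)$. Here I would use the explicit distance formula \eqref{eq_snp_dist}, $d(p,r)^2 = \sum_i \ln^2 \lambda_i(p^{-1}r)$, together with the fact that the geodesic midpoint operation on $\SC^+_n$ restricted to commuting diagonal matrices is just the entrywise geometric mean, so $[\alpha I, \beta I]$ is geodesically convex when intersected with the diagonal torus. The natural approach is to prove the lower bound $d(p,r)^2 \geq \sum_i \ln^2(\lambda_i/\zeta_i)$ for all $r \in [\alpha I, \beta I]$, with equality at $r = q$; one route is to invoke the variational inequality at $q$ directly. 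At $q$ diagonal, one computes $\log_q p = q^{1/2}\log(q^{-1/2}pq^{-1/2})q^{1/2} = \diag(\zeta_i \ln(\lambda_i/\zeta_i))$, which is supported only on the indices $i$ where $\lambda_i \notin [\alpha,\beta]$; on those indices $\zeta_i$ is either $\alpha$ (when $\lambda_i<\alpha$, so $\ln(\lambda_i/\zeta_i)<0$) or $\beta$ (when $\lambda_i>\beta$, so $\ln(\lambda_i/\zeta_i)>0$). Then for any $r \in [\alpha I,\beta I]$ one must show $\langle \log_q p, \log_q r\rangle_q \leq 0$. Writing $\log_q r = q^{1/2}\log(q^{-1/2}rq^{-1/2})q^{1/2}$ and using $\langle v,w\rangle_q = \tr(q^{-1}vq^{-1}w)$, this inner product becomes $\tr\!\big(\diag(\ln(\lambda_i/\zeta_i))\cdot \log(q^{-1/2}rq^{-1/2})\big) = \sum_i \ln(\lambda_i/\zeta_i)\,[\log(q^{-1/2}rq^{-1/2})]_{ii}$. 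Since the sum runs only over the boundary indices, where $\zeta_i \in \{\alpha,\beta\}$ and $r \leq \beta I$, $r \geq \alpha I$, one has $[q^{-1/2}rq^{-1/2}]_{ii} = r_{ii}/\zeta_i \leq 1$ when $\zeta_i = \beta$ and $\geq 1$ when $\zeta_i = \alpha$, and by concavity of $\log$ (Jensen, or simply $\log t \le t-1$ sign considerations) the diagonal entry of $\log(q^{-1/2}rq^{-1/2})$ has the correct sign to make each term of the sum nonpositive — using that $\ln(\lambda_i/\zeta_i)$ has the opposite sign.

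The main obstacle I anticipate is the last sign chase: controlling the sign of the diagonal entries $[\log(q^{-1/2}rq^{-1/2})]_{ii}$ from the matrix inequalities $\alpha I \le r \le \beta I$. The clean tool here is that for a positive definite matrix $M$ with $M \le I$ one has $\log M \le 0$ (operator monotonicity of $\log$), hence all diagonal entries of $\log M$ are $\le 0$; and dually $M \ge I$ gives $\log M \ge 0$. Applying this index-by-index is slightly delicate because the conditions $r_{ii}/\zeta_i \lessgtr 1$ are scalar while operator monotonicity is a global statement, so I would instead argue: on the set $S_\beta = \{i : \lambda_i > \beta\}$ one has $\zeta_i = \beta$, so the relevant block is $P_\beta r P_\beta \le \beta P_\beta$ (where $P_\beta$ is the coordinate projection onto $S_\beta$); conjugating, $q^{-1/2}rq^{-1/2}$ restricted appropriately is $\le I$ on that block, giving nonpositive diagonal entries there, and $\ln(\lambda_i/\zeta_i) > 0$, so those terms are $\le 0$. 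Symmetrically for $S_\alpha = \{i : \lambda_i < \alpha\}$. Once every term in $\sum_i \ln(\lambda_i/\zeta_i)[\,\cdot\,]_{ii}$ is nonpositive, the variational inequality is established and the lemma follows. I would close by noting that compactness and convexity of $[\alpha I, \beta I]$ from the previous lemma guarantee $\PC_{\alpha,\beta}$ is well-defined and single-valued, so the verified variational inequality pins down the unique minimizer.
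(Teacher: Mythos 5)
Your route is genuinely different from the paper's. The paper proves the claim directly from the distance formula \eqref{eq_snp_dist}: it fixes $q^* = U_p\trn\diag(\zeta_i(p))U_p$, computes $d(p,q^*)$, and then shows $d(p,q)\geq d(p,q^*)$ for any $q\in[\alpha I,\beta I]$ by splitting $q = \alpha I + \tilde{q}_1 = \beta I - \tilde{q}_2$ and invoking Weyl-type eigenvalue monotonicity together with the monotonicity of $\ln^2$ on $(0,1]$ and $[1,\infty)$. You instead propose the variational characterization of metric projection in Hadamard manifolds, $\langle \log_q p,\log_q r\rangle_q\leq 0$ for all $r\in\XC$, after an $\SO(n)$-equivariance reduction to diagonal $p$. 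That reduction is valid, the computation $\langle\log_q p,\log_q r\rangle_q=\sum_i\ln(\lambda_i/\zeta_i)\,[\log(q^{-1/2}rq^{-1/2})]_{ii}$ is correct, and the support on boundary indices is identified correctly. This is a legitimate alternative framework; the paper's version is more elementary (no variational inequality, no equivariance) but less structurally transparent.

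However, the final sign-chase has a genuine gap. You must show $[\log M]_{ii}\geq 0$ when $\zeta_i=\alpha$ and $[\log M]_{ii}\leq 0$ when $\zeta_i=\beta$, where $M=q^{-1/2}rq^{-1/2}$. Neither of the two tools you sketch closes this cleanly. The block-compression route fails because taking a principal submatrix does not commute with the matrix logarithm: $P_\beta M P_\beta\leq P_\beta$ tells you nothing about $P_\beta(\log M)P_\beta$, since $\log(M|_{S_\beta})\neq(\log M)|_{S_\beta}$ in general. The scalar Jensen route only gives $[\log M]_{ii}\leq\ln M_{ii}$ (convexity of $\exp$ applied to the spectral measure of $M$ seen from $e_i$), which does settle the $\beta$-indices but goes the \emph{wrong way} on the $\alpha$-indices: $M_{ii}\geq 1$ alone does \emph{not} force $[\log M]_{ii}\geq 0$. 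For instance $M=\begin{pmatrix}1.1 & 0.99\\ 0.99 & 0.91\end{pmatrix}$ is positive definite with $M_{11}>1$ yet $[\log M]_{11}<0$. What saves the argument is the \emph{full} matrix inequality $\alpha q^{-1}\leq M\leq\beta q^{-1}$, which follows from $\alpha I\leq r\leq\beta I$ by congruence. Since $q$ is diagonal, $\alpha q^{-1}$ and $\beta q^{-1}$ are diagonal, so operator monotonicity of $\log$ yields the \emph{diagonal} bounds $\diag(\ln(\alpha/\zeta_i))\leq\log M\leq\diag(\ln(\beta/\zeta_i))$ and hence $\ln(\alpha/\zeta_i)\leq[\log M]_{ii}\leq\ln(\beta/\zeta_i)$ for every $i$. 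On $\alpha$-indices the left bound is $0$, on $\beta$-indices the right bound is $0$, and the variational inequality follows. (This also rules out the counterexample matrix above, which violates $M\geq\alpha q^{-1}$.) With this repair the proof goes through; without it, the sign of the $\alpha$-terms is not controlled.
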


\begin{proof}
Let $p \in \SC^+_n$ be arbitrary. To compute $\PC_{\alpha,\beta}(p) \in [\alpha I,\beta I]$, we have to solve the minimization problem%
\begin{equation*}
  \min_{\alpha I \leq q \leq \beta I} d(p,q)^2.%
\end{equation*}
This problem can equivalently be written as%
\begin{equation*}
  \min_{\lambda_i(q) \in [\alpha,\beta]} \sum_{i=1}^n \ln^2 \lambda_i(p^{-1}q).%
\end{equation*}
We write $p = U\trn \diag(\gamma_1,\ldots,\gamma_n) U$ for some orthogonal $U$ with $\gamma_1 \geq \cdots \geq \gamma_n$, and claim that the minimizer $q^*$ is given by%
\begin{equation*}
  q^* := U\trn \diag(\zeta_1,\ldots,\zeta_n) U,\quad \zeta_i := \left\{ \begin{array}{cl}
	                                                                          \alpha & \mbox{if } \gamma_i < \alpha, \\
																																						\gamma_i & \mbox{if } \gamma_i \in [\alpha,\beta], \\
																																						\beta & \mbox{if } \gamma_i > \beta.%
																																						\end{array}\right.%
\end{equation*}
By construction, $q^*$ is an element of $[\alpha I,\beta I]$. Its distance to $p$ satisfies%
\begin{align*}
  d(p,q^*)^2 &= \sum_{i=1}^n \ln^2 \lambda_i(p^{-1}q^*) = \sum_{\gamma_i < \alpha} \ln^2 \frac{\alpha}{\gamma_i} + \sum_{\gamma_i > \beta} \ln^2 \frac{\beta}{\gamma_i}.%
\end{align*}
To show that $q^*$ is the (unique) minimizer, it suffices to prove that $d(p,q) \geq d(p,q^*)$ for every $q \in [\alpha I,\beta I]$. To this end, pick $q \in [\alpha I,\beta I]$ arbitrary and define $\tilde{q}_1 := q - \alpha I \geq 0$, $\tilde{q}_2 := \beta I - q \geq 0$. Then%
\begin{align*}
  d(p,q)^2 &= \sum_{i=1}^n \ln^2 \lambda_i(p^{-1}q)  \\
	&\geq \sum_{\lambda_i(p^{-1}) > \alpha^{-1}} \ln^2 \lambda_i(p^{-1}(\tilde{q}_1 + \alpha I)) + \sum_{\lambda_i(p^{-1}) < \beta^{-1}} \ln^2 \lambda_i(p^{-1}(\beta I - \tilde{q}_2)) \\
	&= \sum_{\lambda_i(p^{-1}) > \alpha^{-1}} \ln^2 \lambda_i(\alpha p^{-1} + p^{-\frac{1}{2}}\tilde{q}_1p^{-\frac{1}{2}}) \\
	&\qquad + \sum_{\lambda_i(p^{-1}) < \beta^{-1}} \ln^2 \lambda_i(\beta p^{-1} - p^{-\frac{1}{2}}\tilde{q}_2p^{-\frac{1}{2}}) \\
	&\geq \sum_{\lambda_i(p^{-1}) > \alpha^{-1}} \ln^2 \left[\alpha \lambda_i(p^{-1})\right] + \sum_{\lambda_i(p^{-1}) < \beta^{-1}} \ln^2 \left[\beta \lambda_i(p^{-1})\right] \\
	&= \sum_{\lambda_{n-i}(p) < \alpha} \ln^2\frac{\alpha}{\lambda_{n-i}(p)} + \sum_{\lambda_{n-i}(p) > \beta} \ln^2 \frac{\beta}{\lambda_{n-i}(p)} = d(p,q^*)^2.%
\end{align*}
In the last inequality, we used that $p^{-\frac{1}{2}}\tilde{q}_1p^{-\frac{1}{2}} \geq 0$, $p^{-\frac{1}{2}}\tilde{q}_2p^{-\frac{1}{2}} \geq 0$ in combination with \cite[Cor.~I.2.1.1]{boichenko2005dimension} and the monotonicity properties of $\ln^2$.%
\end{proof}

\begin{lemma}\label{lem_diam}
The diameter of the set $[\alpha I,\beta I]$ is given by%
\begin{equation*}
  \diam\, [\alpha I,\beta I] = \sqrt{n} \ln \frac{\beta}{\alpha}.%
\end{equation*}
\end{lemma}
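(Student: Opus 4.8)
The plan is to compute the diameter of $[\alpha I, \beta I]$ by using the explicit formula \eqref{eq_snp_dist} for the distance on $\SC^+_n$ together with the fact, already invoked in the proof of the compactness lemma, that $p_1 \leq p \leq p_2$ implies $\lambda_i(p) \in [\lambda_i(p_1), \lambda_i(p_2)]$ via \cite[Cor.~I.2.1.1]{boichenko2005dimension}. Here $p_1 = \alpha I$ and $p_2 = \beta I$, so for every $p \in [\alpha I, \beta I]$ and every $i$ we have $\lambda_i(p) \in [\alpha,\beta]$. Since the claim is an equality, I would prove the two inequalities separately: first $\diam\,[\alpha I,\beta I] \leq \sqrt{n}\ln(\beta/\alpha)$, and then that this bound is attained.

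For the upper bound, take arbitrary $p,q \in [\alpha I,\beta I]$. By \eqref{eq_snp_dist}, $d(p,q)^2 = \sum_{i=1}^n \ln^2\lambda_i(p^{-1}q)$, and the eigenvalues of $p^{-1}q$ coincide with those of the symmetric positive definite matrix $p^{-1/2}qp^{-1/2}$. The key step is to bound each eigenvalue $\lambda_i(p^{-1/2}qp^{-1/2})$ between $\alpha/\beta$ and $\beta/\alpha$: since $\alpha I \leq q \leq \beta I$, conjugation by $p^{-1/2}$ gives $\alpha p^{-1} \leq p^{-1/2}qp^{-1/2} \leq \beta p^{-1}$, and since $\alpha I \leq p \leq \beta I$ we have $\beta^{-1} I \leq p^{-1} \leq \alpha^{-1} I$; combining these monotone relations (again via \cite[Cor.~I.2.1.1]{boichenko2005dimension}) yields $(\alpha/\beta) I \leq p^{-1/2}qp^{-1/2} \leq (\beta/\alpha) I$, hence $\lambda_i(p^{-1}q) \in [\alpha/\beta,\beta/\alpha]$ for all $i$. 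Therefore $\ln^2\lambda_i(p^{-1}q) \leq \ln^2(\beta/\alpha)$ for each $i$, so $d(p,q)^2 \leq n \ln^2(\beta/\alpha)$, i.e. $d(p,q) \leq \sqrt{n}\ln(\beta/\alpha)$. Taking the supremum over $p,q$ gives the upper bound.

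For the lower bound (attainment), I would exhibit a concrete pair realizing equality: take $p = \alpha I$ and $q = \beta I$, both of which lie in $[\alpha I,\beta I]$. Then $p^{-1}q = (\beta/\alpha) I$, so all its eigenvalues equal $\beta/\alpha$ and $d(\alpha I,\beta I)^2 = \sum_{i=1}^n \ln^2(\beta/\alpha) = n\ln^2(\beta/\alpha)$, whence $d(\alpha I,\beta I) = \sqrt{n}\ln(\beta/\alpha)$. This shows $\diam\,[\alpha I,\beta I] \geq \sqrt{n}\ln(\beta/\alpha)$, and together with the upper bound the equality follows.

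I do not anticipate a serious obstacle here; the only point requiring slight care is the chain of matrix inequalities establishing $(\alpha/\beta)I \leq p^{-1/2}qp^{-1/2} \leq (\beta/\alpha)I$, since one must correctly track how conjugation and the order-reversing property of inversion interact with the L\"owner order — but all of this is contained in the monotonicity facts already used earlier in the section. An alternative, perhaps cleaner, route to the upper bound is to note that the trace-metric distance to the identity $d(\cdot,I)$ is monotone on order intervals containing $I$ in the sense used in the compactness proof, so that for any $p \in [\alpha I,\beta I]$ one bounds $d(p, \alpha I) $ by a direct eigenvalue comparison; however, the symmetric argument via $p^{-1/2}qp^{-1/2}$ seems most transparent and avoids the asymmetry between the two endpoints.
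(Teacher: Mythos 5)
Your proof is correct, and it takes a genuinely different route from the paper's for the upper bound. The paper splits the sum $\sum_i \ln^2\lambda_i(p^{-1}q)$ into eigenvalues $<1$ and $>1$, bounds the two extremes by $\lambda_1(q^{-1}p)$ and $\lambda_1(p^{-1}q)$ respectively, and then invokes the submultiplicativity inequality $\lambda_1(AB)\leq\lambda_1(A)\lambda_1(B)$ for positive definite matrices (from \cite{weber2017riemannian}) to reach $\ln^2(\beta/\alpha)$ in each summand. You instead sandwich the symmetric matrix $p^{-1/2}qp^{-1/2}$ between $(\alpha/\beta)I$ and $(\beta/\alpha)I$ by combining congruence-monotonicity of the L\"owner order with the order-reversing property of inversion, which gives the uniform two-sided bound $\lambda_i(p^{-1}q)\in[\alpha/\beta,\beta/\alpha]$ for every $i$ at once, with no case distinction on the sign of $\ln\lambda_i$ and no appeal to an external eigenvalue-product inequality. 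The attainment step, via $d(\alpha I,\beta I)$, is the same in both. Your version is arguably cleaner and relies only on the order-theoretic facts already used in the compactness lemma; the paper's is perhaps preferable if one wants to reuse the spectral-norm inequality that appears elsewhere in their toolkit.
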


\begin{proof}
Let $p,q \in [\alpha I,\beta I]$ be chosen arbitrarily. Then%
\begin{align*}
  d(p,q)^2 &= \sum_{i=1}^n \ln^2 \lambda_i(p^{-1}q) \\
	         &= \sum_{\lambda_i(p^{-1}q) < 1} \ln^2 \lambda_i(p^{-1}q) + \sum_{\lambda_i(p^{-1}q) > 1} \ln^2 \lambda_i(p^{-1}q) \\
					&\leq \sum_{\lambda_i(p^{-1}q) < 1} \ln^2 \lambda_n(p^{-1}q) + \sum_{\lambda_i(p^{-1}q) > 1} \ln^2 \lambda_1(p^{-1}q) \\
					&= \sum_{\lambda_i(p^{-1}q) < 1} \ln^2 \lambda_1(q^{-1}p) + \sum_{\lambda_i(p^{-1}q) > 1} \ln^2 \lambda_1(p^{-1}q).%
\end{align*}
Using that $\lambda_1(AB) \leq \lambda_1(A)\lambda_1(B)$ for positive definite $A,B$ (\cite[Lem.~4.2]{weber2017riemannian}), we get%
\begin{align*}
  d(p,q)^2 &\leq \sum_{\lambda_i(p^{-1}q) < 1} \ln^2\{\lambda_1(q^{-1})\lambda_1(p)\} + \sum_{\lambda_i(p^{-1}q) > 1} \ln^2 \{\lambda_1(p^{-1})\lambda_1(q)\} \\
	&\leq \sum_{\lambda_i(p^{-1}q) < 1} \ln^2 \frac{\beta}{\alpha} + \sum_{\lambda_i(p^{-1}q) > 1} \ln^2 \frac{\beta}{\alpha} \leq n \ln^2 \frac{\beta}{\alpha}.%
\end{align*}
Since $d(\alpha I,\beta I) = \sqrt{n}\ln\frac{\beta}{\alpha}$, the proof is complete.%
\end{proof}

\subsection{Lipschitz constant}

In this subsection, we derive a Lipschitz constant of the functions $J^2_{k+s,x}$.%

\begin{lemma}\label{lem_lip}
For each $x \in K$, $k \in \{1,\ldots,n-1\}$ and $s \in [0,1)$, the function $J^2_{k+s,x}:\SC^+_n \rightarrow \R$, introduced in \eqref{eq_j1a2}, is globally Lipschitz continuous with Lipschitz constant $L = \sqrt{n}/\ln(2)$.%
\end{lemma}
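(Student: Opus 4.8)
The plan is to reduce the Lipschitz estimate to a statement about the partial sums of $\vec\sigma$ and then to exploit the fact that $p\mapsto\vec\sigma(\zeta_x(p))$ is itself $1$-Lipschitz with respect to the trace metric on $\SC_n^+$ and the Euclidean norm on $\R^n$. Recall from \eqref{eq_j1a2} that $J^2_{k+s,x}(p)=\Sigma_{k+s,x}(p)=\sum_{i=1}^k\log_2\alpha_i^p(x)+s\log_2\alpha_{k+1}^p(x)$, where $\alpha_i^p(x)=\alpha_i(\zeta_x(p))$ with $\zeta_x(p)=p^{1/2}A(x)p^{-1/2}$; thus in terms of $\vec\sigma(\zeta_x(p))=(\log_2\alpha_1,\ldots,\log_2\alpha_n)$ we have $J^2_{k+s,x}(p)=s\,\langle\mathbf 1_{k+1},\vec\sigma(\zeta_x(p))\rangle+(1-s)\langle\mathbf 1_k,\vec\sigma(\zeta_x(p))\rangle$, where $\mathbf 1_j$ denotes the vector with $j$ ones followed by zeros. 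Since $\|\mathbf 1_j\|=\sqrt j\le\sqrt n$ for $j\le n$ and the coefficients $s,1-s$ are nonnegative and sum to $1$, the map $\xi\mapsto s\langle\mathbf 1_{k+1},\xi\rangle+(1-s)\langle\mathbf 1_k,\xi\rangle$ is Lipschitz on $\R^n$ (Euclidean norm) with constant $s\sqrt{k+1}+(1-s)\sqrt k\le\sqrt n$.

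It then suffices to show that $p\mapsto\vec\sigma(\zeta_x(p))$ is Lipschitz from $(\SC_n^+,d)$ to $(\R^n,\|\cdot\|)$ with constant $1/\ln 2$ (the factor $1/\ln 2$ coming from converting $\log_2$ to $\ln$ in $\vec\sigma$); composing the two estimates yields the claimed constant $L=\sqrt n/\ln 2$. For the Lipschitz property of $\vec\sigma$, I would argue as follows. The singular values of $\zeta_x(p)=p^{1/2}A(x)p^{-1/2}$ are, by \cite[Lem.~5]{kawan2021remote} or a direct computation, the square roots of the eigenvalues of $P(\phi(x))^{1/2}A(x)P(x)^{-1}A(x)\trn P(\phi(x))^{1/2}$ — but here $P\equiv p$ is constant, so these are the eigenvalues of $p\,A(x)p^{-1}A(x)\trn$ (up to similarity), equivalently the square roots of the eigenvalues of $A(x)\trn pA(x)p^{-1}$. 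Passing to logarithms, $2\log\alpha_i(\zeta_x(p))=\log\lambda_i(A(x)\trn pA(x)\cdot p^{-1})$. The key point is that the map sending a positive definite matrix $g\in\SC_n^+$ to the ordered vector $(\log\lambda_1(g),\ldots,\log\lambda_n(g))$ is $1$-Lipschitz from $(\SC_n^+,d)$ into $(\R^n,\|\cdot\|)$ — this is essentially the statement that $d(g,I)=\|(\log\lambda_i(g))_i\|$ together with the fact that $\GL(n,\R)$ acts by isometries and the standard Lidskii/Wielandt-type inequality for the difference of the log-spectra of two positive definite matrices, see e.g. the discussion in \cite[Ch.~6]{bhatia2009positive}. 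Combining this with the observation that $p\mapsto p^{1/2}A(x)p^{-1/2}$, followed by "take singular values", amounts to moving a fixed base point by the isometric action $g\mapsto A(x)gA(x)\trn$-type construction, gives the desired $1/\ln2$ Lipschitz bound on $\vec\sigma\circ\zeta_x$.

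The main obstacle I anticipate is making the last step fully rigorous: one must carefully verify that $p\mapsto\vec\sigma(\zeta_x(p))$ really is a $(1/\ln2)$-Lipschitz map and not just a continuous one, i.e.\ that the distortion introduced by conjugating $A(x)\trn pA(x)$ against $p^{-1}$ is controlled exactly by the trace-metric distance between two competing $p$'s. The cleanest route is probably to write, for $p,p'\in\SC_n^+$, the difference $\vec\sigma(\zeta_x(p))-\vec\sigma(\zeta_x(p'))$ in terms of $\log$-eigenvalues of $\zeta_x(p)\zeta_x(p')^{-1}$-type products and then invoke the triangle-type inequality $\|(\log\lambda_i(ab))_i\|\le\|(\log\lambda_i(a))_i\|+\|(\log\lambda_i(b))_i\|$ for positive definite $a,b$ (a consequence of the fact that $d$ is a genuine metric on $\SC_n^+$), reducing everything to $d(p,p')$. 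Once that inequality is in hand the proof is just bookkeeping with the coefficients $s$ and $1-s$ as above.
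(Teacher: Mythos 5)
Your plan is structurally different from the paper's proof, and it has a genuine gap at its central step.

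The paper reduces, as you do, to the case $s=0$, but then works directly with the partial products $\omega_k(g) = \alpha_1(g)\cdots\alpha_k(g)$: it factors $p^{1/2}A(x)p^{-1/2} = (p^{1/2}q^{-1/2})(q^{1/2}A(x)q^{-1/2})(q^{1/2}p^{-1/2})$, applies Horn's inequality $\omega_k(ABC)\le\omega_k(A)\omega_k(B)\omega_k(C)$, and then bounds $\ln\omega_k(p^{1/2}q^{-1/2})$ by the $1$-norm of $\vec\sigma(p^{1/2}q^{-1/2})$, passes to the $2$-norm with a $\sqrt n$ factor, and finally identifies $\|\vec\sigma(p^{1/2}q^{-1/2})\|_2$ with $\tfrac{1}{2\ln 2}\,d(p,q)$ via \eqref{eq_snp_dist}. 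Horn's inequality is precisely a statement about the $k$-th partial sums of the log-singular-value vectors, which is all that $J^2_{k,x}$ sees.

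Your route instead tries to establish the stronger assertion that the full vector map $p\mapsto\vec\sigma(\zeta_x(p))$ is $(1/\ln 2)$-Lipschitz from $(\SC_n^+,d)$ to $(\R^n,\|\cdot\|_2)$, and then composes with the trivial linear-functional bound $\|s\mathbf 1_{k+1}+(1-s)\mathbf 1_k\|_2\le\sqrt n$. This stronger vector-Lipschitz estimate is not proved in your sketch, and the argument you offer for it does not work as stated. You invoke the $1$-Lipschitz property of $g\mapsto(\ln\lambda_i(g))_i$ on $(\SC_n^+,d)$ (a genuine fact, essentially the exponential metric increasing inequality), but you then want to apply it with $g$ replaced by $p^{-1/2}A(x)\trn p A(x)p^{-1/2}$ (or, in your notation, $A(x)\trn pA(x)p^{-1}$, which is not even in $\SC_n^+$). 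The map $p\mapsto p^{-1/2}A(x)\trn pA(x)p^{-1/2}$ is \emph{not} of the isometric form $p\mapsto gpg\trn$: it involves both $p^{1/2}$ and $p^{-1/2}$, so the ``isometric action'' argument does not give a Lipschitz bound for it without further work. You acknowledge this gap yourself and propose as a fallback to use a ``triangle-type inequality'' on log-eigenvalues of products; carried through, that fallback essentially reconstructs the paper's Horn-inequality argument. As it stands, the crucial estimate $\|\vec\sigma(\zeta_x(p))-\vec\sigma(\zeta_x(q))\|_2\le\tfrac{1}{\ln 2}d(p,q)$ would require a multiplicative Lidskii/Thompson-type perturbation theorem for singular values under two-sided multiplication, which is a stronger (and unproved) tool than the Horn inequality the paper uses and which your reduction silently assumes.
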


\begin{proof}
Since $J^2_{k+s,x} = (1-s)J^2_{k,x} + sJ^2_{k+1,x}$, it suffices to prove the statement for $s = 0$. Using the identities%
\begin{align*}
  \lambda_i(p^{-1}q) = \lambda_i( p^{-\frac{1}{2}} q p^{-\frac{1}{2}} ) = \lambda_i((p^{-\frac{1}{2}} q^{\frac{1}{2}})(p^{-\frac{1}{2}}q^{\frac{1}{2}})\trn) = \alpha_i(p^{-\frac{1}{2}}q^{\frac{1}{2}})^2%
\end{align*}
and formula \eqref{eq_snp_dist}, we can rewrite $d(p,q)$ as%
\begin{equation*}
  d(p,q) = 2\Bigl( \sum_{i=1}^n [\ln \alpha_i(p^{-\frac{1}{2}}q^{\frac{1}{2}})]^2 \Bigr)^{\frac{1}{2}}.%
\end{equation*}
Let $\omega_0(g) := 1$ and $\omega_k(g) := \alpha_1(g)\cdots \alpha_k(g)$ for any $g\in\GL(n,\R)$ and $k = 1,\ldots,n$. Then%
\begin{align*}
  J^2_{k,x}(p) - J^2_{k,x}(q) &= \sum_{i=1}^k \log_2 \alpha_i(p^{\frac{1}{2}}A(x)p^{-\frac{1}{2}}) - \sum_{i=1}^k \log_2 \alpha_i(q^{\frac{1}{2}}A(x)q^{-\frac{1}{2}}) \\
	&= \log_2 \omega_k(p^{\frac{1}{2}}A(x)p^{-\frac{1}{2}}) - \log_2 \omega_k(q^{\frac{1}{2}}A(x)q^{-\frac{1}{2}}) \\
	&= \log_2 \frac{\omega_k(p^{\frac{1}{2}}A(x)p^{-\frac{1}{2}})}{\omega_k(q^{\frac{1}{2}}A(x)q^{-\frac{1}{2}})} = \log_2 \frac{\omega_k(p^{\frac{1}{2}} q^{-\frac{1}{2}} q^{\frac{1}{2}}A(x)q^{-\frac{1}{2}} q^{\frac{1}{2}}p^{-\frac{1}{2}})}{\omega_k(q^{\frac{1}{2}}A(x)q^{-\frac{1}{2}})}.%
\end{align*}
Using Horn's inequality $\omega_k(AB) \leq \omega_k(A)\omega_k(B)$ (see \cite[Prop.~I.2.3.1]{boichenko2005dimension}), we thus obtain%
\begin{align*}
  J^2_{k,x}(p) - J^2_{k,x}(q) \leq \frac{1}{\ln(2)} \ln [\omega_k(p^{\frac{1}{2}}q^{-\frac{1}{2}}) \omega_k(q^{\frac{1}{2}}p^{-\frac{1}{2}})].%
\end{align*}
Writing $\|\cdot\|_1$ and $\|\cdot\|_2$ for the $1$-norm and $2$-norm in $\R^n$, respectively, we can further estimate%
\begin{align*}
  \ln\omega_k(p^{\frac{1}{2}}q^{-\frac{1}{2}}) &= \sum_{i=1}^k \ln \alpha_i(p^{\frac{1}{2}}q^{-\frac{1}{2}}) \leq \sum_{i=1}^k |\ln \alpha_i(p^{\frac{1}{2}}q^{-\frac{1}{2}})| \leq \ln(2) \| \vec{\sigma}(p^{\frac{1}{2}}q^{-\frac{1}{2}}) \|_1 \\
	&\leq \sqrt{n} \ln(2) \| \vec{\sigma}(p^{\frac{1}{2}}q^{-\frac{1}{2}}) \|_2 = \frac{\sqrt{n}}{2} d(p^{-1},q^{-1}) = \frac{\sqrt{n}}{2} d(p,q).%
\end{align*}
The analogous estimates hold for $\omega_k(q^{\frac{1}{2}}p^{-\frac{1}{2}})$, and hence%
\begin{equation*}
  |J^2_{k,x}(p) - J^2_{k,x}(q)| \leq \frac{\sqrt{n}}{\ln(2)}\, d(p,q).%
\end{equation*}
The proof is complete.
\end{proof}

\subsection{Error analysis}

Assume that $s_2(x)$ is the subgradient of $J^2_{k,x}$ at $p \in \SC^+_n$ for some $k \in \{1,\ldots,n-1\}$ and $x \in K$ as computed via the procedure explained at the end of Subsection \ref{subsec_problem}. Our aim is to understand how far $s_2(x)$  can be from a real subgradient in the case that $x$ is not a maximizer of $x \mapsto J^2_{k,x}(p)$. Recall that $\zeta_x(p) = p^{\frac{1}{2}}A(x)p^{-\frac{1}{2}}$.%

The following theorem helps to compute the error in case that $x$ is close to a maximizer.%

\begin{theorem}
Let $x,y \in K$ and assume that%
\begin{equation}\label{eq_def_delta}
  \delta := \alpha_k(\zeta_y(p)) - \alpha_{k+1}(\zeta_x(p)) > 0.%
\end{equation}
Then%
\begin{align*}
  \|s_2(x) - s_2(y)\|_p \leq \frac{\sqrt{2n(n+1)}}{\ln 2} \frac{1}{\delta} \|p^{\frac{1}{2}}\|_F \|p^{-\frac{1}{2}}\|_F \|A(x) - A(y)\|_F.%
\end{align*}
\end{theorem}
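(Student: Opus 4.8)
The plan is to trace through the six‑step subgradient construction from Subsection~\ref{subsec_problem} and bound the difference of the outputs in terms of the perturbation in the input matrix $\zeta_x(p)$ versus $\zeta_y(p)$. The key observation is that the construction is the same for $x$ and $y$ except that step~(4) produces the SVD of $\zeta_x(p)$ in one case and of $\zeta_y(p)$ in the other, and step~(3) produces $Z_i$ built from $A(x)$ in one case and $A(y)$ in the other; everything else (the basis $\{e_i\}$, the Lyapunov solutions $X_i$) is identical. So I would write $s_2(x) = \sum_i \tr[S_x\trn Z_i^x]e_i$ and $s_2(y) = \sum_i \tr[S_y\trn Z_i^y]e_i$, and since $\{e_i\}$ is orthonormal,
\begin{equation*}
  \|s_2(x) - s_2(y)\|_p^2 = \sum_i \bigl(\tr[S_x\trn Z_i^x] - \tr[S_y\trn Z_i^y]\bigr)^2 = \sum_i \bigl(\tr[S_x\trn(Z_i^x - Z_i^y)] + \tr[(S_x - S_y)\trn Z_i^y]\bigr)^2,
\end{equation*}
which splits the error into two contributions: one from $Z_i^x - Z_i^y$ (i.e.\ from $A(x) - A(y)$) and one from $S_x - S_y$ (i.e.\ from the difference of the truncated inverse‑singular‑value matrices).

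First I would bound the $Z$‑contribution. From step~(3), $Z_i^x - Z_i^y = X_i(A(x)-A(y))p^{-1/2} - p^{1/2}(A(x)-A(y))p^{-1/2}X_ip^{-1/2}$, so submultiplicativity of the Frobenius norm gives $\|Z_i^x - Z_i^y\|_F \le \|X_i\|_F(1 + \|p^{1/2}\|_F\|p^{-1/2}\|_F)\|p^{-1/2}\|_F\|A(x)-A(y)\|_F$ up to constants, and $\|S_x\|_F \le \sqrt{k}/(\ln 2)\cdot \alpha_k(\zeta_x(p))^{-1}$. Since $\sum_i \|X_i\|_F^2$ is controlled by the geometry of the Lyapunov operator at $p$ (each $X_i$ solves $p^{1/2}X_i + X_ip^{-1/2} = e_i$ with $\{e_i\}$ orthonormal in the trace metric), I expect this to collapse, after summing $i = 1,\dots,N$ with $N = n(n+1)/2$ and using Cauchy--Schwarz, into a bound of the shape $\tfrac{1}{\ln 2}\cdot\tfrac{1}{\alpha_k(\zeta_x(p))}\cdot C(p)\cdot\|A(x)-A(y)\|_F$. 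The second piece, the $S$‑contribution, is where the spectral gap $\delta$ enters: $S_x - S_y$ is a difference of matrices $\tfrac{1}{\ln 2}U^\top\diag(\alpha_1^{-1},\dots,\alpha_k^{-1},0,\dots,0)V$ for the two SVDs, and a perturbation estimate for the truncated pseudoinverse of a matrix with a spectral gap of size $\ge\delta$ bounds $\|S_x - S_y\|_F$ by a constant times $\tfrac{1}{\ln 2}\cdot\tfrac{1}{\delta}$ times $\|\zeta_x(p) - \zeta_y(p)\|_F$; and $\|\zeta_x(p) - \zeta_y(p)\|_F = \|p^{1/2}(A(x)-A(y))p^{-1/2}\|_F \le \|p^{1/2}\|_F\|p^{-1/2}\|_F\|A(x)-A(y)\|_F$. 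Paired with $\|Z_i^y\|_F$ bounds and the sum over $i$, this also yields a term of the same shape with $1/\delta$.

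Combining the two contributions, pulling out the common factor $\tfrac{1}{\ln 2}\|p^{1/2}\|_F\|p^{-1/2}\|_F\|A(x)-A(y)\|_F$, absorbing $\alpha_k(\zeta_x(p))^{-1} \le \delta^{-1}$ (which follows from $\alpha_k(\zeta_y(p)) > \alpha_{k+1}(\zeta_x(p)) \ge 0$, so $\delta < \alpha_k(\zeta_y(p))$; one actually wants $\alpha_k(\zeta_x(p))^{-1}$ controlled, which needs a little care — see below), and collecting the dimensional constants into $\sqrt{2n(n+1)}$, gives the claimed inequality. The main obstacle I anticipate is the perturbation bound for the truncated inverse‑singular‑value matrix $S$: singular vectors are only stable when there is a gap, and here the relevant gap is between the $k$‑th and $(k+1)$‑th singular values \emph{across} the two different matrices $\zeta_y(p)$ and $\zeta_x(p)$, which is exactly what the hypothesis $\delta>0$ encodes; making the constant come out as cleanly as $\sqrt{2n(n+1)}$ will require choosing the SVD factorizations consistently and being careful that $1/\alpha_k(\zeta_x(p))$, not just $1/\alpha_k(\zeta_y(p))$, is dominated by $1/\delta$ — this may need the additional elementary remark that $\alpha_{k+1}(\zeta_x(p)) \le \alpha_k(\zeta_x(p))$ together with $\delta = \alpha_k(\zeta_y(p)) - \alpha_{k+1}(\zeta_x(p))$, or a symmetrization of the roles of $x$ and $y$ in the definition of $\delta$.
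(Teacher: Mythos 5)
Your proposal identifies the right ingredients (orthonormality of $\{e_i\}$, the bound on $\|X_ip^{-1/2}\|_F$, and a Wedin/Davis--Kahan-type estimate for singular subspace perturbation keyed to the gap $\delta$), but the decomposition
\begin{equation*}
\tr[S_x\trn Z_i^x] - \tr[S_y\trn Z_i^y] = \tr[S_x\trn(Z_i^x - Z_i^y)] + \tr[(S_x - S_y)\trn Z_i^y]
\end{equation*}
does not lead to the stated bound. The first term pulls out a factor $\|S_x\|_F \leq \frac{\sqrt{k}}{\ln 2}\,\alpha_k(\zeta_x(p))^{-1}$, and you flag at the end that $\alpha_k(\zeta_x(p))^{-1}$ is not controlled by $\delta^{-1}$. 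This is not a small technicality but a real obstruction: the singular values of $\zeta_x(p)$ and $\zeta_y(p)$ are a priori unrelated, so $\alpha_k(\zeta_x(p))$ can be arbitrarily small while $\delta = \alpha_k(\zeta_y(p)) - \alpha_{k+1}(\zeta_x(p))$ stays large (take $\alpha_k(\zeta_y(p))$ large and both $\alpha_k(\zeta_x(p)),\alpha_{k+1}(\zeta_x(p))$ tiny). Neither the monotonicity $\alpha_{k+1}(\zeta_x(p))\le\alpha_k(\zeta_x(p))$ nor a symmetrization of the definition of $\delta$ fixes this, and the theorem's bound has no factor $\alpha_k^{-1}$ anywhere --- so if your two-term splitting carries one, the argument cannot close.

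The missing idea is an \emph{algebraic} cancellation that must happen before any analytic perturbation estimate. Using trace cyclicity on $\tr[S_x\trn Z_i]$ with $Z_i = X_iA(x)p^{-1/2} - p^{1/2}A(x)p^{-1/2}X_ip^{-1/2}$ and $\zeta_x(p)=p^{1/2}A(x)p^{-1/2}$, one has
\begin{equation*}
\tr[S_x\trn Z_i] = \tr\bigl[X_ip^{-1/2}\bigl(\zeta_x(p)S_x\trn - S_x\trn\zeta_x(p)\bigr)\bigr],
\end{equation*}
and the crucial point is that $\zeta_x(p)S_x\trn = \frac{1}{\ln 2}\bar U_x\bar U_x\trn$ and $S_x\trn\zeta_x(p) = \frac{1}{\ln 2}\bar V_x\bar V_x\trn$: multiplying $S_x$ by $\zeta_x(p)$ cancels every $\alpha_i^{-1}$ against $\alpha_i$, leaving orthogonal projections onto the leading-$k$ left/right singular subspaces. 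After this the entire difference $s_2(x)-s_2(y)$ is controlled by $\|\bar U_x\bar U_x\trn - \bar U_y\bar U_y\trn\|_F$ and $\|\bar V_x\bar V_x\trn - \bar V_y\bar V_y\trn\|_F$, i.e.\ by singular \emph{subspace} perturbations alone, and Wedin's theorem delivers the factor $1/\delta$ directly from the hypothesis \eqref{eq_def_delta}. Your splitting destroys this cancellation because $Z_i^x$ sits next to $S_x$ while $Z_i^y$ sits next to $S_x-S_y$ and $S_y$; the cross products $\zeta_y(p)S_x\trn$ that would arise are built from SVDs of \emph{different} matrices and are not projections, so the unbounded $\alpha_k^{-1}$ survives. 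To repair the proof, keep the pairings $S_x$ with $\zeta_x(p)$ and $S_y$ with $\zeta_y(p)$, perform the cyclic rewrite first, and only then take the difference.
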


\begin{proof}
By linearity of the trace function and orthonormality of $\{e_i\}$, we have%
\begin{align*}
  \|s_2(x) - s_2(y)\|^2_p &= \Bigl\| \sum_i \tr[S_x\trn \rmD\zeta_x(p)e_i - S_y\trn \rmD\zeta_y(p)e_i]e_i \Bigr\|_p^2 \\
	                        &= \sum_i \tr^2[S_x\trn \rmD\zeta_x(p)e_i - S_y\trn \rmD\zeta_y(p)e_i].%
\end{align*}
Using again the linearity of the trace function and the fact that $\tr[AB] = \tr[BA]$ for any $A,B$, we find that%
\begin{align*}
  \tr[S_x\trn \rmD\zeta_x(p)e_i] &= \tr[S_x\trn(X_iA(x)p^{-\frac{1}{2}} - p^{\frac{1}{2}}A(x)p^{-\frac{1}{2}}X_ip^{-\frac{1}{2}})] \\
	&= \tr[X_iA(x)p^{-\frac{1}{2}}S_x\trn - X_ip^{-\frac{1}{2}} S_x\trn p^{\frac{1}{2}}A(x)p^{-\frac{1}{2}}] \\
	&= \tr [X_ip^{-\frac{1}{2}} ( \zeta_x(p) S_x\trn - S_x\trn \zeta_x(p) ) ].%
\end{align*}
Now write $\alpha_i := \alpha_i(\zeta_x(p))$ and observe that%
\begin{align*}
  \zeta_x(p) S_x\trn &= \frac{1}{\ln 2} U\trn \diag(\alpha_1,\ldots,\alpha_n) V V\trn \diag(\alpha_1^{-1},\ldots,\alpha_k^{-1},0,\ldots,0) U \\
	&= \frac{1}{\ln 2} U\trn (I_{k\tm k} \oplus 0_{(n-k)\tm (n-k)}) U,%
\end{align*}
and analogously%
\begin{align*}
  S_x\trn \zeta_x(p) = \frac{1}{\ln 2} V\trn (I_{k\tm k} \oplus 0_{(n-k)\tm (n-k)}) V.%
\end{align*}
We observe that $U\trn (I_{k\tm k} \oplus 0_{(n-k)\tm (n-k)}) U$ is the orthogonal projection onto the $k$-dimensional subspace of $\R^n$ spanned by the first $k$ left singular vectors of $\zeta_x(p)$, that we denote by $u_1(x) := U_x\trn e_1,\ldots,u_k(x) := U_x\trn e_k$. Introducing the notation $\bar{U}_x := [u_1(x)|\cdots|u_k(x)] \in \R^{n \tm k}$, we further observe that%
\begin{equation*}
  U\trn(I_{k\tm k} \oplus 0_{(n-k)\tm (n-k)})U = \bar{U}_x\bar{U}_x\trn.%
\end{equation*}
Altogether, we have obtained the identity%
\begin{align*}
  \|s_2(x) - s_2(y)\|^2_p = \frac{1}{(\ln 2)^2}\sum_i \tr^2[X_ip^{-\frac{1}{2}}(\bar{U}_x\bar{U}_x\trn - \bar{V}_x\bar{V}_x\trn - \bar{U}_y\bar{U}_y\trn + \bar{V}_y\bar{V}_y\trn)].%
\end{align*}
To estimate the trace, we use the Cauchy-Schwarz inequality in $\R^{n\tm n}$ which yields%
\begin{equation*}
  \|s_2(x) - s_2(y)\|^2_p \leq \frac{1}{(\ln 2)^2} \sum_i \|X_ip^{-\frac{1}{2}}\|_F^2\|\bar{U}_x\bar{U}_x\trn - \bar{V}_x\bar{V}_x\trn - \bar{U}_y\bar{U}_y\trn + \bar{V}_y\bar{V}_y\trn\|_F^2.%
\end{equation*}
To estimate the term $\|X_ip^{-\frac{1}{2}}\|_F$, we use that%
\begin{align*}
  1 &= \|e_i\|_p^2 = \tr[p^{-1}e_ip^{-1}e_i] = \tr[p^{-1}(p^{\frac{1}{2}}X_i + X_ip^{\frac{1}{2}}) p^{-1}(p^{\frac{1}{2}}X_i + X_ip^{\frac{1}{2}})] \\
	&= \tr[(p^{-\frac{1}{2}}X_i + p^{-1}X_ip^{\frac{1}{2}})(p^{-\frac{1}{2}}X_i + p^{-1}X_ip^{\frac{1}{2}})] \\
	&= \tr[p^{-\frac{1}{2}}X_ip^{-\frac{1}{2}}X_i] + \tr[p^{-\frac{1}{2}}X_ip^{-1}X_ip^{\frac{1}{2}}] + \tr[p^{-1}X_i^2] \\
	&\quad + \tr[p^{-1}X_ip^{-\frac{1}{2}}X_ip^{\frac{1}{2}}] \\
	&= 2 \tr[p^{-\frac{1}{2}}X_ip^{-\frac{1}{2}}X_i] + 2 \tr[X_ip^{-1}X_i] \\
	&= 2 \|X_i\|^2_{p^{\frac{1}{2}}} + 2 \tr[ (X_ip^{-\frac{1}{2}})\trn(X_ip^{-\frac{1}{2}}) ]%
\end{align*}
which implies%
\begin{equation*}
  \|X_ip^{-\frac{1}{2}}\|^2_F = \tr[ (X_ip^{-\frac{1}{2}})\trn(X_ip^{-\frac{1}{2}}) ] = \frac{1}{2} - \|X_i\|_{p^{\frac{1}{2}}}^2 \leq \frac{1}{2}.%
\end{equation*}
Hence, we obtain%
\begin{align*}
  \|s_2(x) - s_2(y)\|^2_p &\leq \frac{n(n+1)}{4(\ln 2)^2} \|\bar{U}_x\bar{U}_x\trn - \bar{V}_x\bar{V}_x\trn - \bar{U}_y\bar{U}_y\trn + \bar{V}_y\bar{V}_y\trn\|_F^2 \\
	&\leq \frac{n(n+1)}{4(\ln 2)^2}\bigl( \|\bar{U}_x\bar{U}_x\trn - \bar{U}_y\bar{U}_y\trn\|_F + \|\bar{V}_x\bar{V}_x\trn - \bar{V}_y\bar{V}_y\trn\|_F \bigr)^2.%
\end{align*}
Next, we use that $\|\bar{U}_x\bar{U}_x\trn - \bar{U}_y\bar{U}_y\trn\|_F$ is the distance between the two subspaces $L^U_x := \langle u_1(x),\ldots,u_k(x) \rangle$ and $L^U_y := \langle u_1(y),\ldots,u_k(y) \rangle$, regarded as elements of the Grassmannian of $k$-planes in $\R^n$. We further obtain%
\begin{align*}
  \|s_2(x) - s_2(y)\|_p &\leq \frac{\sqrt{n(n+1)}}{2\ln 2}\bigl( \|\bar{U}_x\bar{U}_x\trn - \bar{U}_y\bar{U}_y\trn\|_F + \|\bar{V}_x\bar{V}_x\trn - \bar{V}_y\bar{V}_y\trn\|_F\bigr) \\
	&\leq \frac{\sqrt{2n(n+1)}}{2\ln 2} \sqrt{\|\bar{U}_x\bar{U}_x\trn - \bar{U}_y\bar{U}_y\trn\|_F^2 + \|\bar{V}_x\bar{V}_x\trn - \bar{V}_y\bar{V}_y\trn\|_F^2}.%
\end{align*}
We have%
\begin{equation*}
  \|\bar{U}_x\bar{U}_x\trn - \bar{U}_y\bar{U}_y\trn\|_F = \sqrt{2}\|\sin \Theta(L^U_x,L^U_y)\|_F,%
\end{equation*}
where $\Theta$ is the matrix of canonical angles between $L_x$ and $L_y$ (see \cite[p.~9]{stewart1998perturbation}). This leads to%
\begin{align*}
  \|s_2(x) - s_2(y)\|_p \leq \frac{\sqrt{n(n+1)}}{\ln 2} \sqrt{\|\sin \Theta(L^U_x,L^U_y)\|_F^2 + \|\sin \Theta(L^V_x,L^V_y)\|^2_F}.%
\end{align*}
Then  \emph{Wedin's theorem} (see \cite{cai2018rate} for a timely reference) states that the inequality $\delta > 0$ with $\delta$ as in \eqref{eq_def_delta} implies%
\begin{equation*}
  \sqrt{\|\sin \Theta(L^U_x,L^U_y)\|_F^2 + \|\sin \Theta(L^V_x,L^V_y)\|^2_F} \leq \frac{1}{\delta}\sqrt{\|R\|_F^2 + \|S\|_F^2},%
\end{equation*}
where $R$ and $S$ are matrices built from the singular value decompositions of $\zeta_x(p)$ and $\zeta_y(p)$ that satisfy $\max\{\|R\|_F,\|S\|_F\} \leq \|\zeta_x(p) - \zeta_y(p)\|_F$. Altogether,%
\begin{align*}
  \|s_2(x) - s_2(y)\|_p \leq \frac{\sqrt{2n(n+1)}}{\ln 2} \frac{1}{\delta}\|\zeta_x(p) - \zeta_y(p)\|_F.%
\end{align*}
Finally, we can estimate%
\begin{align*}
  \|\zeta_x(p) - \zeta_y(p)\|_F = \|p^{\frac{1}{2}}(A(x) - A(y))p^{-\frac{1}{2}}\|_F \leq \|p^{\frac{1}{2}}\|_F \|p^{-\frac{1}{2}}\|_F \|A(x) - A(y)\|_F.%
\end{align*}
The proof is complete.
\end{proof}

\begin{remark}
The assumption $\alpha_k(\zeta_y(p)) - \alpha_{k+1}(\zeta_x(p)) > 0$ will be satisfied if%
\begin{equation*}
  \alpha_k(\zeta_x(p)) - \alpha_{k+1}(\zeta_x(p)) > \|\zeta_x(p) - \zeta_y(p)\|,%
\end{equation*}
because the singular values are $1$-Lipschitz with respect to the matrix, implying%
\begin{equation*}
  \alpha_k(\zeta_y(p)) - \alpha_{k+1}(\zeta_x(p)) \geq -\|\zeta_x(p) - \zeta_y(p)\| + \alpha_k(\zeta_x(p)) - \alpha_{k+1}(\zeta_x(p)).%
\end{equation*}
\end{remark}

\section{Examples}
\label{sec:5}

In this section we will apply the theory  to   the H\'enon system with standard parameters $a=1.4$ and $b=0.3$, which is given by%
\begin{align*}
	x(t+1) &= 1.4 - x(t)^2 + 0.3y(t), \\
	y(t+1) &= x(t).%
\end{align*}The system has the two equilibria
$(x_+,x_+)$ and $(x_-,x_-)$, where $x_\pm=\frac{1}{2}(b-1\pm\sqrt{(b-1)^2+4a})$.
The quadrilateral $K$ with the following corners is a trapping region \cite{henon1976two}:%
\begin{align*}
	A &= (-1.862,1.96),\quad B = (1.848,0.6267) \\
	C &= (1.743,-0.6533),\quad D = (-1.484,-2.3333)%
\end{align*}
In particular, $K$ is a compact forward-invariant set.

We adapted the software from \cite{kawan2021software} for our computations; see also \cite{kawan2021subgradient}.  The maximization of $x\mapsto J_{k+s,x}(a,p)$ is performed on a $1000\times 1000$ grid and
is then refined on a grid of the same size around the maximizer found in the first step.  In all computations reported we used a fourth-degree polynomial $r_a$; we also performed computations with sixth- and eight-degree polynomials with very similar results.
W.l.o.g.~the constant term of the polynomial $r_a$ is always set equal to zero.
The exogenous step size  used was $t_k=16/k$ as in   \cite{kawan2021subgradient} and we always started with the metric $P=I$, i.e.~$p=I$ and $r_a=0$.
Let us first discuss the estimation of the dimension of the attractor.

\subsection{Dimension of the attractor}
We used exogenous step sizes and considered polynomials $r_a$ of degree 4.  With $k=1$ we experimented with different $s\in[0,1)$, the results are summarized in Table \ref{tab:dim}.  A few comments are in order.
For $s=0.430$ we did not obtain a negative value for $\max_{x\in K}\Sigma_{k+s,x}(P)$ in $100,000$ iterations. For $s\ge 0.435$ we obtained a negative value for $\max_{x\in K}\Sigma_{k+s,x}(P)$  and therefore the upper bound $\dim_L K \le 1.435$ on the
Lyapunov dimension.  For larger $s$ a negative value is obtained in fewer iterations and for $s\ge0.450$ no iterations are needed, i.e.~the metric $P=I$ is sufficient to prove the upper bound. 
The formula for the computed metric that gives the best upper bound $\dim_L K \le 1.435$ is shown in Table \ref{tab:polydim}.

\begin{table}[H]
\centering
Upper bounds on the Lyapunov dimension of the  H\'enon system on $K$\\
\vspace{2mm}
\begin{tabular}{|c|c|c|c|}
\hline
$\dim_L K $ &$s$ & First neg.~itr. & $\max_{x\in K}\Sigma_{k+s,x}(P) $\\ \hline
1.450      &    0.450      &    0 & -0.02278695484779664                        \\ \hline
1.445      &    0.445      &    641 & -0.005147904573907709            \\ \hline
1.440      &0.440      &1,061     &    -0.0001574233386151721                         \\ \hline
1.435      &0.435      &30,633     &    -0.0001650558073724702                           \\ \hline
$\star$       &0.430      & $\star$    &    $>0$                            \\ \hline
\end{tabular}
\vspace{0.2cm}
\caption{Results for the Lyapunov dimension estimate for various $s\in[0,1)$.  In all cases $k=1$ so $\dim_L K \le 1+s$. In the third column we write the first iteration where $\max_{x\in K}\Sigma_{k+s,x}(P)$ is negative and in the fourth column we give the value.  For $s=0.450$ no iterations are needed to obtain a negative value and for $s=0.430$ a negative value was not obtained in 100,000 iterations.
}
\label{tab:dim}
\end{table}

%

\begin{table}[H]
\centering
Metric $P= \rme^{r_a(x,y)}p$ for the  H\'enon system on $K$ that gives\\ the lowest upper bound $1.435$ on the Lyapunov dimension.
$$ p=\begin{pmatrix}
1.826992505777629 &0.0001682542238040804 \\
0.0001682542238040805 &0.5473476356577599
                 \end{pmatrix}
$$
\vspace{2mm}
\renewcommand{\arraystretch}{1.2}
\begin{tabular}{|c|c||c|c|}
\hline
\multicolumn{4}{|c|}{$r_a(x,y)$}\\ \hline
term &coefficient  & term &coefficient \\ \hline
$x$         & -0.7490964013161001 &$xy^2$      & 0.8389891922283548    \\ \hline
$y$         & 0.4240943498868161  &$y^3$       & -0.1068782681490099   \\ \hline
$x^2$       & 0.5582334651904182  &$x^4$       & 0.1074167044058492    \\ \hline
$xy$        & -0.2077703297215025 &$x^3y$      & 0.07468665833930994   \\ \hline
$y^2$       & -0.9333405988663968 &$x^2y^2$    & -0.0824147604660482   \\ \hline
$x^3$       & 0.09286451278055695 &$xy^3$      & -0.03892077756475409  \\ \hline
$x^2y$      & -0.1186578586959028 &$y^4$       & 0.4646176185293597    \\ \hline
\end{tabular}

\vspace{2mm}
\caption{
}
\label{tab:polydim}
\end{table}

For this example, the Lyapunov dimension of a bounded invariant set $K^*$ containing the equilibria $(x_-,x_-)$ and $(x_+,x_+)$ is $\dim_L K^*=1.4953$, see \cite[Thm.~3]{Leo} or \cite[Sect.~6.5.1]{Reit}. Note that our set $K$ does contain  $(x_+,x_+)$, but not $(x_-,x_-)$.
To apply our method to this case, we enlarged the set $K$ by changing the point $D$ to $(-2,-2.3333)$ to include the point $(x_-,x_-)$. Then  we obtained the upper bound $\dim_L K \le 1.5=1+s$ with $s=0.5$;  we did not obtain a negative value for  $\max_{x\in K}\Sigma_{k+s,x}(P)$ in 100,000 iterations with $s=0.49$.
Hence, our method found the upper bound $1.5$ on the actual value $1.4953$.

Other results in the literature include the numerical value of $1.220\pm 0.019$ for the fractal dimension of the attractor of the H\'enon system, see \cite{Martinez-Lopez}. Note that the Hausdorff dimension is bounded above by the fractal dimension, which in turn is bounded above by the Lyapunov dimension.

Back to the original set $K$ containing only $(x_+,x_+)$, we note that the Lyapunov dimension of the equilibrium $(x_+,x_+)$ is $1.3521$, which is thus a lower bound on the Lyapunov dimension of $K$. An upper bound, computed with our method is $1.435$.

\subsection{Restoration entropy}
\label{example:resent}
 In the first computation of the restoration entropy  we  used exogenous step sizes $t_k=16/k$ and no projection.  The results for the estimation of the restoration entropy are shown in Figure \ref{fig:exenoproj} on a log-log graph.  The initial upper bound with $P=I$ is $1.951140849266661$. The upper bound quickly increases and then settles down and converges.  The best upper bound is 1.313884120199142 obtained in iteration 99,654.  The formula for the computed metric that gives the best upper bound  is given in Table \ref{tab:exo000form}.

 \begin{figure}[H]
    \centering
    \includegraphics[width=0.8\linewidth]{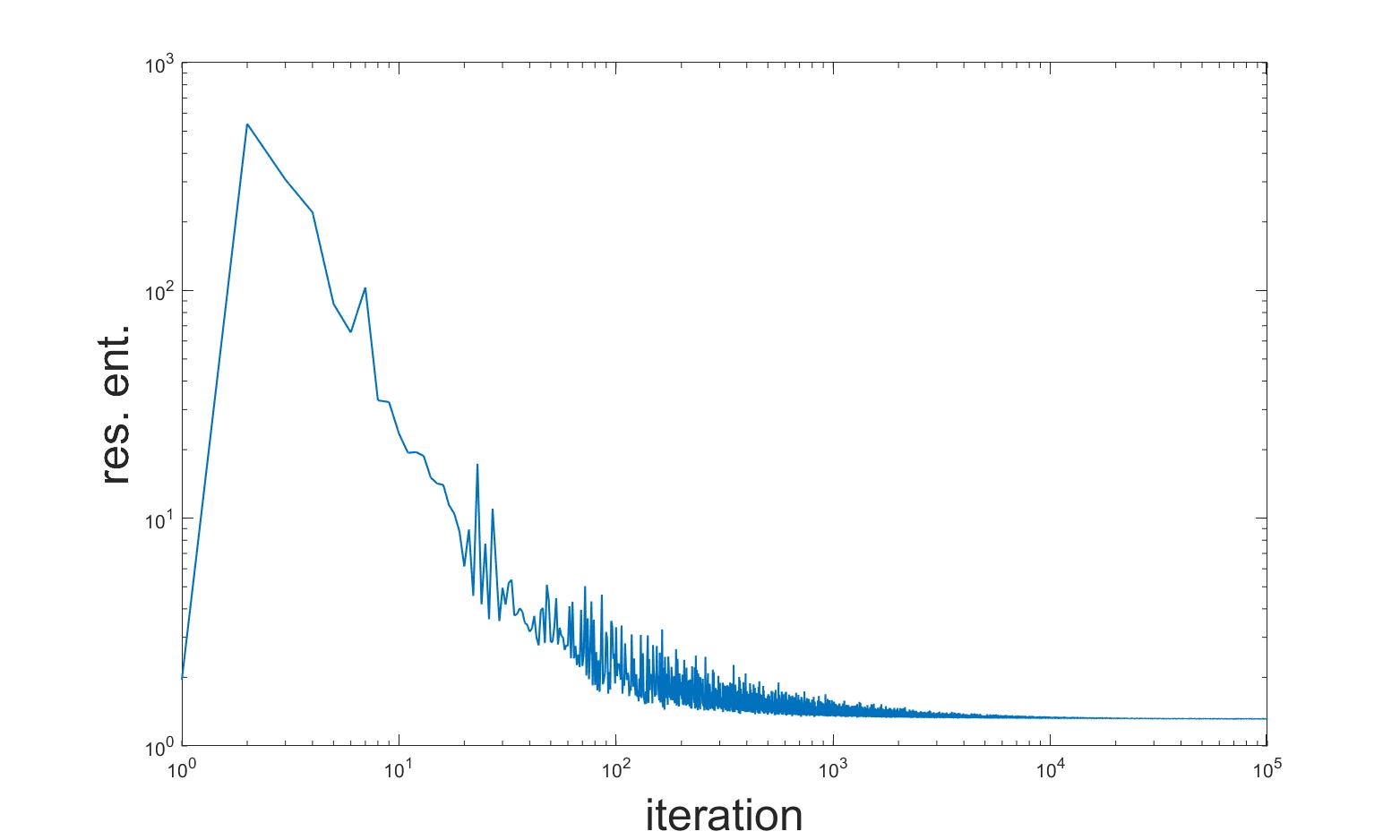}
    \caption{}
    \label{fig:exenoproj}
    \caption{Restoration entropy for the H\'enon system using exogenous step sizes $t_k=16/k$. The best upper bound is 1.313884120199142 obtained in iteration 99,654. }
\end{figure}

%
\begin{table}[H]
\centering
Metric $P= \rme^{r_a(x,y)}p$ for the  H\'enon system on $K$ that gives\\ the lowest upper bound 1.313884120199142 on the\\ restoration entropy (exogeneous step size).
$$ p=\begin{pmatrix}
1.717042057691368 &-0.01579983885287614\\
-0.01579983885287614 &0.5825423031576957
                 \end{pmatrix}
$$
\vspace{2mm}
\renewcommand{\arraystretch}{1.2}
\begin{tabular}{|c|c||c|c|}
\hline
\multicolumn{4}{|c|}{$r_a(x,y)$}\\ \hline
term &coefficient  & term &coefficient \\ \hline
$x$         & -0.03728625249859665 &$xy^2$      & 0.5850719733431917    \\ \hline
$y$         & 0.7173696947291418   &$y^3$       & 0.01827202349317083   \\ \hline
$x^2$       & 0.3852955420533915   &$x^4$       & 0.288243555432863     \\ \hline
$xy$        & -0.1800768847947239  &$x^3y$      & 0.07019941290373798   \\ \hline
$y^2$       & -0.5726642618756508  &$x^2y^2$    & -0.05631895479152903  \\ \hline
$x^3$       & -0.1943808570457239  &$xy^3$      & 0.022221575956417     \\ \hline
$x^2y$      & -0.3414860502248702  &$y^4$       & 0.3333574278923304    \\ \hline
\end{tabular}

\vspace{2mm}
\caption{}
\label{tab:exo000form}
\end{table}

Next we used fixed step sizes, $t_k=0.01$ and $t_k=0.001$, respectively.  The results are depicted in Figure \ref{fig:fs0001noproj}, step size $t_k=0.01$ in blue and step size $t_k=0.001$ in red.  The overshot at the beginning is much lower than when using exogenous step sizes and lower for the smaller step size.  Further, they do not converge to a value but oscillate in an interval, whose width is determined by the step size.  For step size $t_k=0.01$,  the best upper bound on the restoration entropy is  1.304961201612717, obtained in iteration 75,754. For step size $t_k=0.001$, the best upper is
1.306393103425693 obtained in iteration 99,279.  Note that although the best upper bounds are somewhat better than for the exogenous step sizes, they take longer to settle to reasonable estimates.  For example,  after the first 1,000 iterations the best upper bound is 1.341017992604001
for the exogenous step sizes but
1.394573310929959 and 1.573822400831881 for the fixed  step sizes $t_k=0.01$ and $t_k=0.001$ respectively.
The formulas for the computed metric that give the best upper bounds are presented in Table
\ref{fixed001} and
\ref{fixed0001} for step sizes $t_k=0.01$ and $t_k=0.001$, respectively.

\begin{figure}[H]
    \centering
    \includegraphics[width=0.8\linewidth]{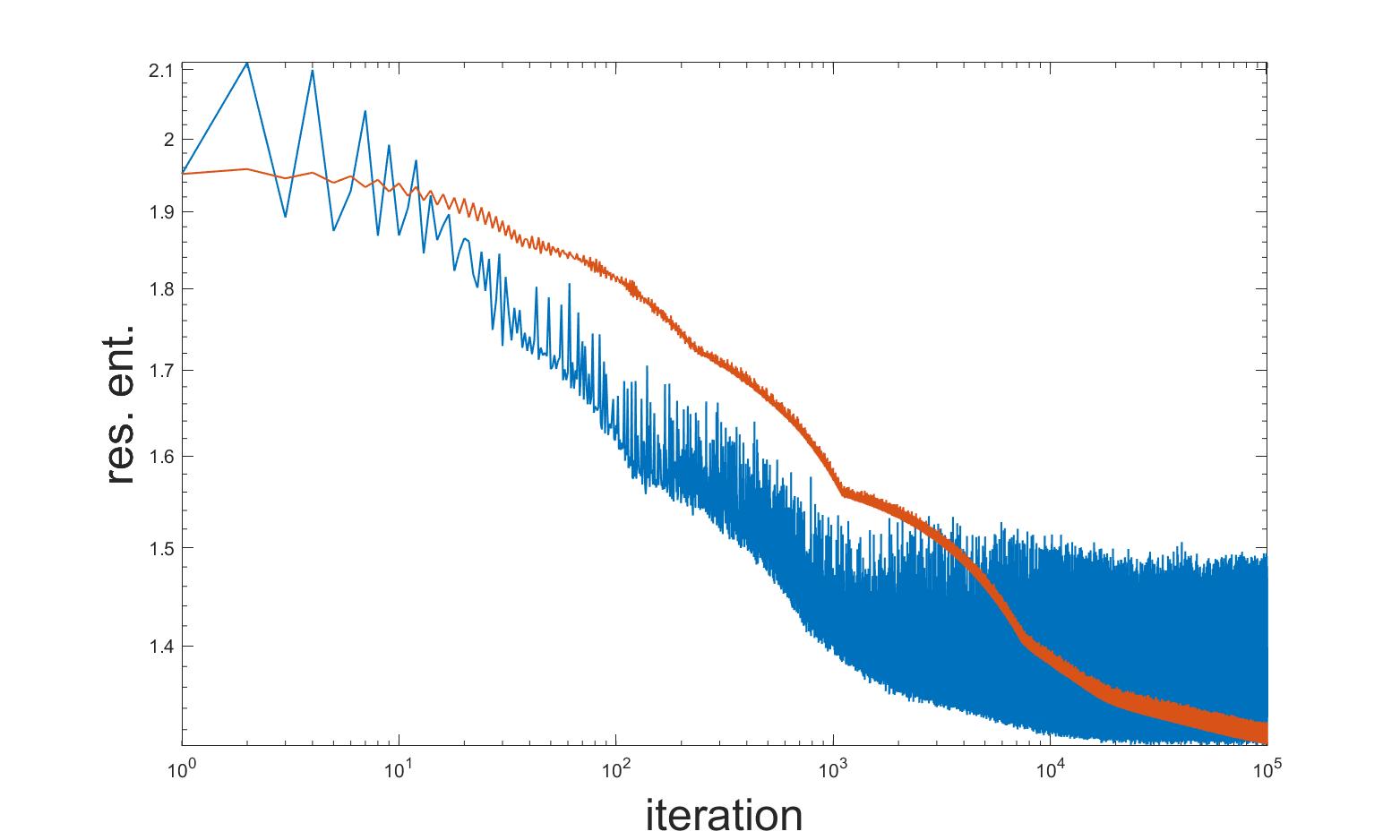}
    \caption{Restoration entropy for the H\'enon system using fixed step sizes  $t_k=0.01$ (blue) and $t_k=0.001$ (red), respectively. The best upper bound is 1.304961201612717 and 1.306393103425693, respectively. The bounds oscillate in an interval, the  size of which depends on the step size. }
    \label{fig:fs0001noproj}
\end{figure}
%

%
\begin{table}[H]
\centering
Metric $P= \rme^{r_a(x,y)}p$ for the  H\'enon system on $K$ that gives\\ the lowest upper bound 1.304961201612717 on the\\ restoration entropy (fixed step size  $t_k=0.01$)
$$ p=\begin{pmatrix}
1.825723762954203 &-2.675239129430998\cdot10^{-6}  \\
-2.675239129430998\cdot10^{-6} &0.5477279862255381
                 \end{pmatrix}
$$
\vspace{2mm}

\renewcommand{\arraystretch}{1.2}
\begin{tabular}{|c|c||c|c|}
\hline
\multicolumn{4}{|c|}{$r_a(x,y)$}\\ \hline
term &coefficient  & term &coefficient \\ \hline
$x$         & 0.5573870474721652   &$xy^2$      &  0.07724664642514245  \\ \hline
$y$         & 0.7077871346437571   &$y^3$       &  0.06313658395058322  \\ \hline
$x^2$       & 0.2275835960549717   &$x^4$       &  0.2599915790953258   \\ \hline
$xy$        & -0.149843153473276   &$x^3y$      &  0.06823486825177701  \\ \hline
$y^2$       & 0.1021644733650378   &$x^2y^2$    &  -0.03085636902009057 \\ \hline
$x^3$       & -0.1663567125415807  &$xy^3$      &  0.03118446283824575  \\ \hline
$x^2y$      & -0.3651316623899711  &$y^4$       &  0.05509114432195542  \\ \hline
\end{tabular}

\vspace{2mm}
\caption{
}
\label{fixed001}
\end{table}
\begin{table}[H]
\centering
Metric $P= \rme^{r_a(x,y)}p$ for the  H\'enon system on $K$ that gives\\ the lowest upper bound 1.306393103425693 on the\\ restoration entropy (fixed step size  $t_k=0.001$)
$$ p=\begin{pmatrix}
1.598582176519417 &-0.0340879261290964      \\
-0.03408792612909639 &0.6262812142912478
                 \end{pmatrix}
$$
\vspace{2mm}

\renewcommand{\arraystretch}{1.2}
\begin{tabular}{|c|c||c|c|}
\hline
\multicolumn{4}{|c|}{$r_a(x,y)$}\\ \hline
term &coefficient  & term &coefficient \\ \hline
$x$         & 0.4301700400628248  &$xy^2$      &  0.1991442773311169   \\ \hline
$y$         & 0.5742005669502973  &$y^3$       &  0.09368300218613741  \\ \hline
$x^2$       & 0.2138324226849934  &$x^4$       &  0.2663176394736673   \\ \hline
$xy$        & -0.2027704610258738 &$x^3y$      &  0.09374967991423779  \\ \hline
$y^2$       & 0.04317190264405415 &$x^2y^2$    &  0.007144757712718732 \\ \hline
$x^3$       & -0.1495907154564866 &$xy^3$      &  0.05518248936958972  \\ \hline
$x^2y$      & -0.2981438582431617 &$y^4$       &  0.0808752884012343   \\ \hline
\end{tabular}

\vspace{2mm}
\caption{
}
\label{fixed0001}
\end{table}

In Table \ref{tab:norms} we list the eigenvalues of $p$ and the norm of the vector $a$ defining the polynomial $r_a$ in the metric that delivers the lowest upper bound on the restoration entropy for the step sizes we used.

\begin{table}[h!]
\begin{tabular}{|c|c|c|c|}
  \hline
   &norm $a$  & min eigenvalue $p$ & max eigenvalue $p$ \\ \hline
  exogenous & 1.313270369387425 & 0.582322306187126 & 1.717262054661938 \\
  $t_k=0.01$ & 1.069807454767147 & 0.547727986219938 & 1.825723762959803 \\
  $t_k=0.001$ & 0.923314506269372 & 0.625087590080113 & 1.599775800730552 \\
  \hline
\end{tabular}
\vspace{0.2cm}

\caption{The eigenvalues of $p$ and norm of the vector $a$ defining the polynomial $r_a$ for the optimal metrics obtained with the different step sizes.}  \label{tab:norms}
\end{table}

From the table we deduct that a projection of  $a$ to the ball with radius $1.5$ in $\R^{15}$ and of $p$ to the interval $[0.5I,2I]$ should not be limiting.  We repeat the computations above using the projected sub-gradient algorithm.
For the exogenous step sizes the results were slightly different, see Figure \ref{fig:compexoprj}, and the bound on the restoration entropy was lower, i.e.~1.305419886141552 obtained in iteration 99,456.
The values for the best metric are displayed in Table \ref{tab:project}.

For the fixed step sizes they were so similar that we abstain from plotting them.  The difference in the bound on the restoration entropy was in the fourth decimal place.  For the step size $t_k=0.001$ we also tried projecting  $p$ on
$[0.61I,1.61I]$ and $a$ to the ball with radius $0.95$, i.e.~very close the the values we had found for the optimal metric.  The results were very similar to the results without projection with no notable improvement.
Note that the results with fixed step size are  what one would expect from the error analysis results for the projected subgradient algorithm with fixed step size, cf.~\eqref{eq:complexityConstant}:  the estimates oscillate in an interval and do not converge.

\begin{figure}[H]
    \centering
    \includegraphics[width=0.8\linewidth]{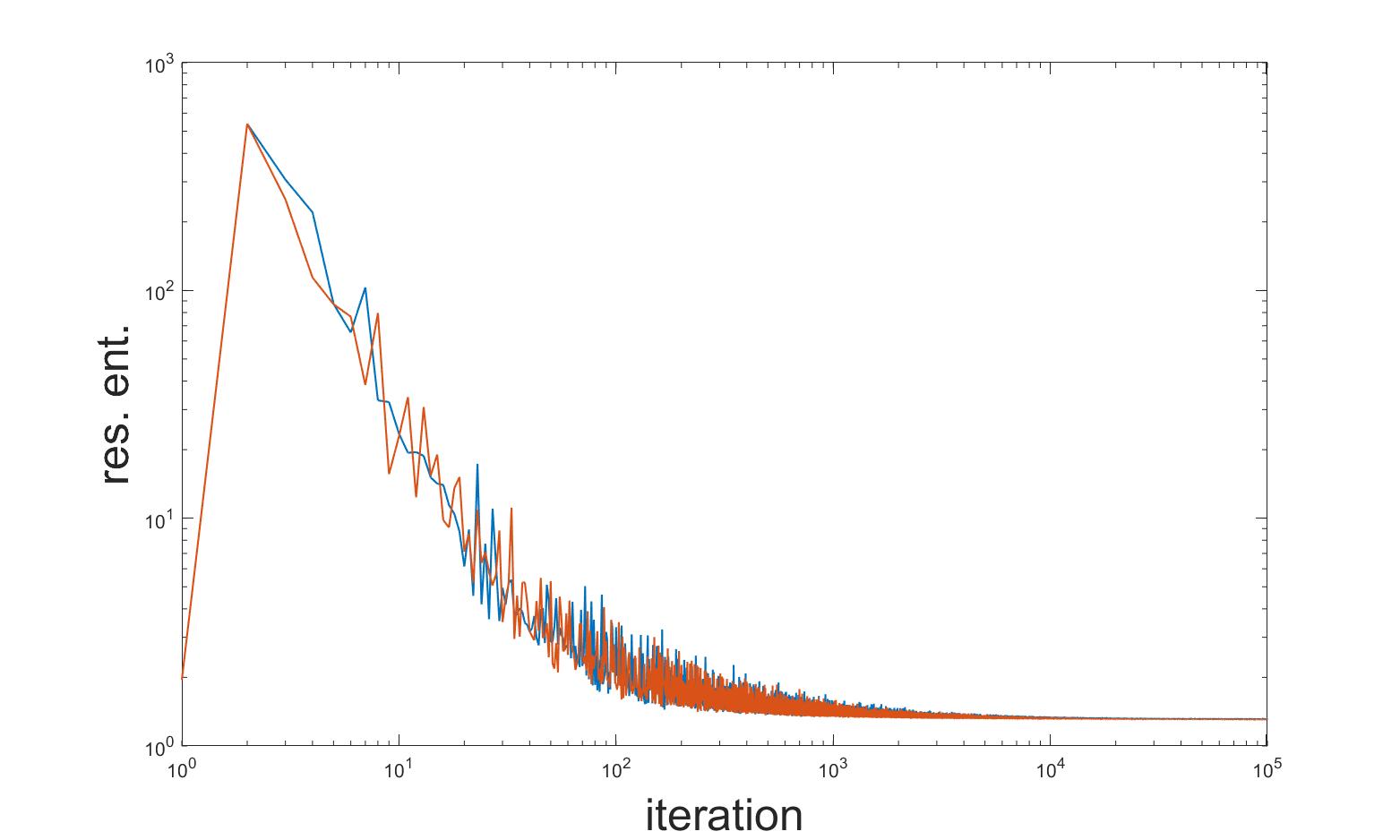}
    \caption{Restoration entropy for the H\'enon system using exogenous step sizes $t_k=16/k$, with projection  of  $a$ to the ball with radius $1.5$ in $\R^{15}$ and of $p$ to the interval $[0.5I,2I]$ (red) and without projection (blue, see also Fig.~\ref{fig:exenoproj}). The best upper bound of the projected version is ~1.305419886141552 obtained in iteration 99,456, while the best result for the one without projection  was 1.313884120199142 obtained in iteration 99,654.}
    \label{fig:compexoprj}
\end{figure}

%

\begin{table}[H]
\centering
Metric $P= \rme^{r_a(x,y)}p$ for the  H\'enon system on $K$ that gives\\ the lowest upper bound 1.305419886141552 on the restoration entropy\\ (exogeneous step size with projection to ball of radius 1.5 and $[0.5I,2I]$).
$$ p=\begin{pmatrix}
1.674120023285701 &-0.02266875912007223 \\
-0.02266875912007223 &0.5976356884178141
                 \end{pmatrix}
$$
\vspace{2mm}

 \renewcommand{\arraystretch}{1.2}
 \begin{tabular}{|c|c||c|c|}
 \hline
 \multicolumn{4}{|c|}{$r_a(x,y)$}\\ \hline
 term &coefficient  & term &coefficient \\ \hline
 $x$         & 0.3113339501091084    &$xy^2$      &  0.2845946891080638   \\ \hline
 $y$         & 0.7114011283722037    &$y^3$       &  0.03537685809298098  \\ \hline
 $x^2$       & 0.2642178218204402    &$x^4$       &  0.2828190161018292   \\ \hline
 $xy$        & -0.1555315664585948   &$x^3y$      &  0.06467714057792809  \\ \hline
 $y^2$       & -0.2072950713728302   &$x^2y^2$    &  -0.04965314808703013 \\ \hline
 $x^3$       & -0.1894657057265681   &$xy^3$      &  0.02054268087684425  \\ \hline
 $x^2y$      & -0.3660416537330809   &$y^4$       &  0.1771757807926334   \\ \hline
 \end{tabular}

\vspace{2mm}
\caption{
}
\label{tab:project}
\end{table}

When the projection is to a subset of $\SC_2^+\times \R^{15}$, where an optimal metric is not located, the subgradient method converges to a higher value of the bound on the restoration entropy; see Figure \ref{fig:badpro} where the projection is to
a ball with radius $0.5$ for $a$ and $[1.5I,2I]$ for $p$ and the upper bound converges to 1.387955604316810.  The metric is given in Table \ref{badprojectionX1}.

\begin{figure}[H]
    \centering
    \includegraphics[width=0.8\linewidth]{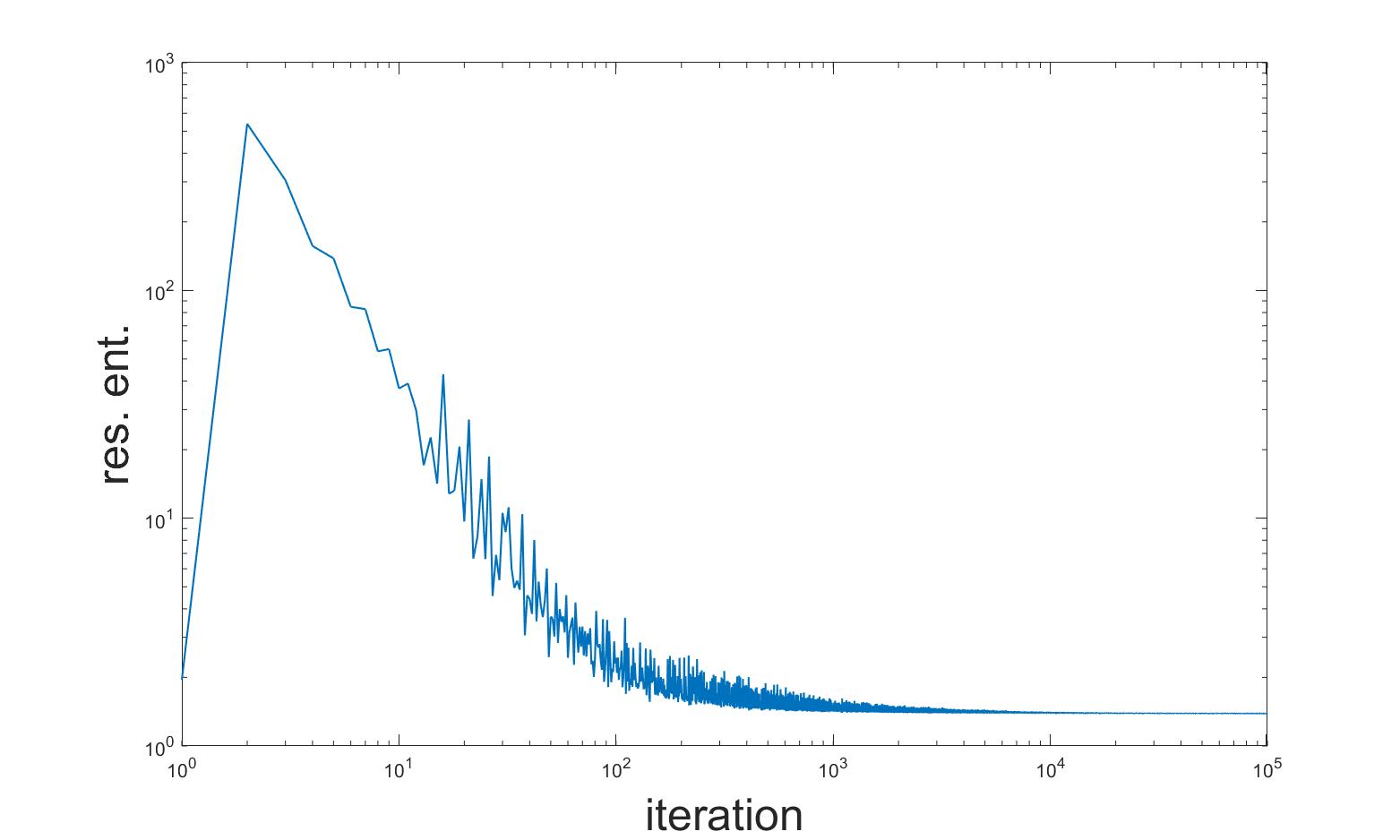}
    \caption{Restoration entropy for the H\'enon system using exegenous step sizes $t_k=16/k$ and  projection  of  $a$ to the ball with radius $0.5$ in $\R^{15}$ and of $p$ to the interval $[1.5I,2I]$. The best upper bound on
     the restoration entropy was 1.387955604316810. The optimal metric is not in the set projected on. }
    \label{fig:badpro}
\end{figure}
%
\begin{table}[H]
\centering

Metric $P= \rme^{r_a(x,y)}p$ for the  H\'enon system on $K$ that gives the lowest upper bound 1.387955604316810 on the restoration entropy (exogenous step size and projection to a ball of radius 0.5 and $[1.5I,2I]$; the optimal metric is not in this set). 
$$ p=\begin{pmatrix}
1.960938619846507 &-0.1342041312602802 \\
-0.1342041312602802 &1.539064363507252
                 \end{pmatrix}
$$
\vspace{2mm}

\renewcommand{\arraystretch}{1.2}
\begin{tabular}{|c|c||c|c|}
\hline
\multicolumn{4}{|c|}{$r_a(x,y)$}\\ \hline
term &coefficient  & term &coefficient \\ \hline
$x$         & 0.1032680736194371      &$xy^2$      &  0.1575577522285691   \\ \hline
$y$         & 0.1810915872324425      &$y^3$       &  0.1919490860856328   \\ \hline
$x^2$       & -0.01629057454520433    &$x^4$       &  0.2630121488080179   \\ \hline
$xy$        & -0.1664244112423047     &$x^3y$      &  0.07944553907063254  \\ \hline
$y^2$       & 0.02202230639431921     &$x^2y^2$    &  0.08610480234652296  \\ \hline
$x^3$       & 0.002088602994479542    &$xy^3$      &  0.1177457076550595   \\ \hline
$x^2y$      & -0.1377282628515208     &$y^4$       &  0.02732136529820107  \\ \hline
\end{tabular}

\vspace{2mm}
\caption{
}
\label{badprojectionX1}
\end{table}

Finally, we used Polyak step sizes.  From our results we deemed it sensible to set $f^*:=1.3$  in \eqref{StepsizePolyak}  and use the projection $p$ to the interval $[0.5I,2I]$ and $a$ to the ball with radius $1.5$ in $\R^{15}$.
 We tried $\alpha$ equal to $0.5$, $1$, and $1.5$ times $\frac{\tanh\left(\hat{\kappa}D \right)}{\hat{\kappa }D}$.  The best results were obtained with $1.5$: the upper bound 1.300531470950855 on the restoration entropy in iteration
 98,861. The metric is displayed in Table \ref{tab:Polyak}. This was the best result we obtained in our computations.

 \begin{figure}[H]
    \centering
    \includegraphics[width=0.8\linewidth]{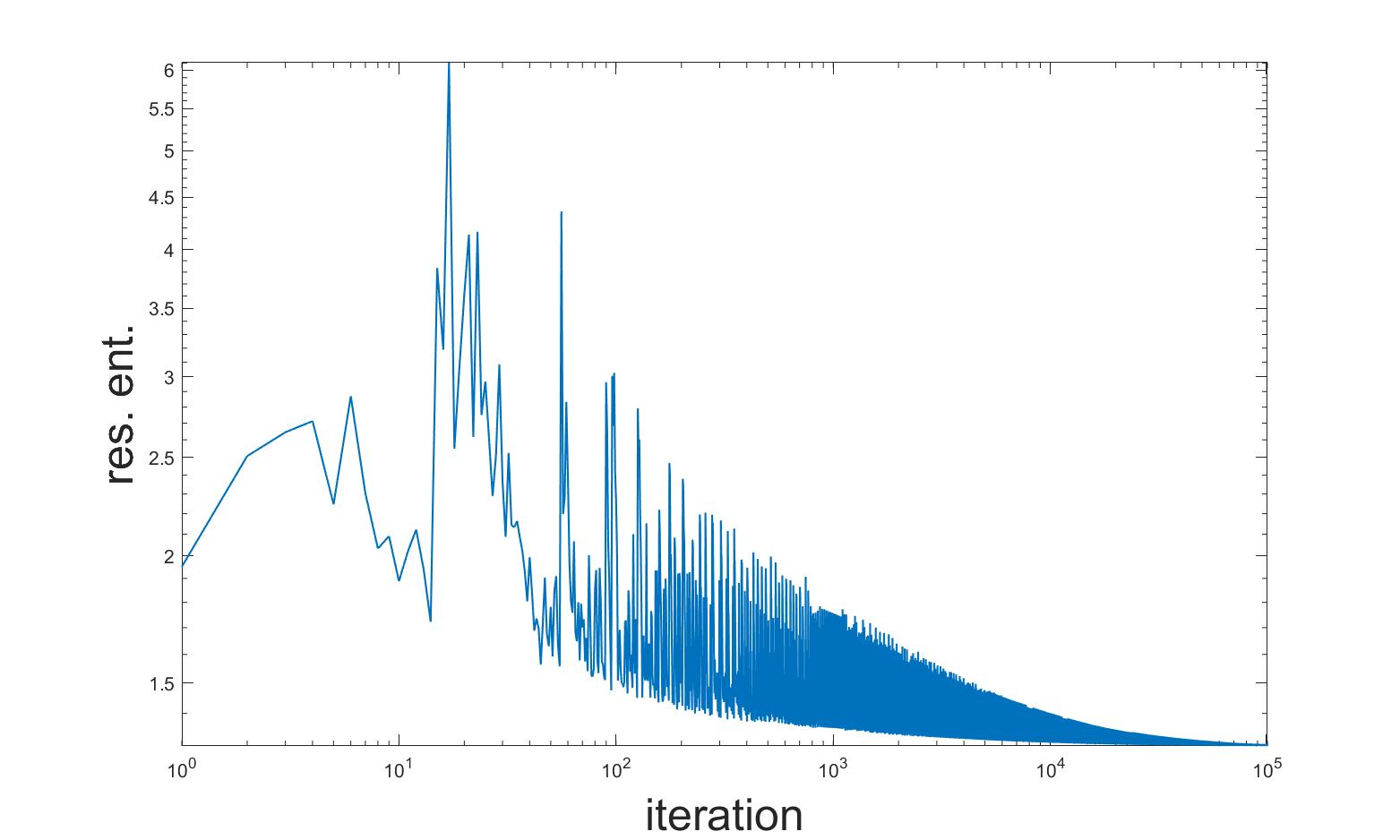}
    \caption{Restoration entropy for the H\'enon system using Polyak step sizes with $\alpha=1.5\frac{\tanh\left(\hat{\kappa}D \right)}{\hat{\kappa }D}$ and using projection  of  $a$ to the ball with radius $1.5$ in $\R^{15}$ and of $p$ to the interval $[0.5I,2I]$. The best upper bound on
     the restoration entropy was 1.300531470950855 obtained in iteration 98,861.}
    \label{fig:polyak}
\end{figure}

%

\begin{table}[H]
\centering
Metric $P= \rme^{r_a(x,y)}p$ for the  H\'enon system on $K$ that gives\\ the lowest upper bound 1.300531470950855 on the restoration entropy\\ (Polyak step size with $f^*=1.3$, projection to $[0.5I,2I]$ and 
 $\alpha=1.5\frac{\tanh\left(\hat{\kappa}D \right)}{\hat{\kappa }D}$.).
$$ p=\begin{pmatrix}
1.738181852184494 &-0.01314292044388416 \\
-0.01314292044388416 &0.5754131739079722
                 \end{pmatrix}
$$
\vspace{2mm}

\renewcommand{\arraystretch}{1.2}
\begin{tabular}{|c|c||c|c|}
\hline
\multicolumn{4}{|c|}{$r_a(x,y)$}\\ \hline
term &coefficient  & term &coefficient \\ \hline
$x$         & 0.5043455401581707      &$xy^2$      &  0.1066076072540229    \\ \hline
$y$         & 0.700029597798829       &$y^3$       &  0.06332507211373492   \\ \hline
$x^2$       & 0.2393485122804599      &$x^4$       &  0.2636011379394431    \\ \hline
$xy$        & -0.1604171490173032     &$x^3y$      &  0.06845945107788326   \\ \hline
$y^2$       & 0.05239219536145903     &$x^2y^2$    &  -0.02920581871325886  \\ \hline
$x^3$       & -0.1702108324691254     &$xy^3$      &  0.03270559792757753   \\ \hline
$x^2y$      & -0.3700063614833601     &$y^4$       &  0.06889085991787346   \\ \hline
\end{tabular}

\vspace{2mm}
\caption{
}
\label{tab:Polyak}
\end{table}

\section{Conclusions}

In this paper we have overcome several shortcomings of the subgradient algorithm with applications to derive upper bounds on the dimension of attractors and the entropy of systems. In particular, we have introduced a projected subgradient method on a compact subset of 
the domain which guarantees that for this restricted problem a minimizer exists. We demonstrated the applicability of our novel algorithm for upper bounding the Lyapunov dimension and the restoration entropy of the H\'enon map.

\appendix%

\section{Sectional curvature of the space of positive matrices}

In this section, we compute the tightest lower bound on the sectional curvature of the Riemannian manifold $\SC^+_n$. Our analysis is based on the fact that $\SC^+_n$ has the structure of a \emph{Riemannian symmetric space}. We refer to \cite[ch.~4]{helgason1979differential} for the theory of symmetric spaces. We use the notation $\SO(n) = \{ A \in \GL(n,\R) : AA\trn = I,\ \det(A) = 1\}$ for the special orthogonal group.%

By \cite[Thm.~3.3]{helgason1979differential}, every symmetric space $M$ is a homogeneous space, i.e.~there is a Lie group $G$ acting transitively on $M$ such that $M$ is diffeomorphic to the Lie group quotient $G/K$ with $K = \{g \in G: gp=p\}$ being the isotropy group of a base point $p \in M$ (that can be chosen arbitrarily). In fact, one can take $G$ to be the isometry group of $M$.%

To compute the curvature of a homogeneous space, it obviously suffices to compute the curvature at one point. Since the identity matrix $I \in \SC^+_n$ is a canonical choice, we only look at the sectional curvature at $p = I$.%

We will use the following result, see \cite[Prop.~3.4]{helgason1979differential}.%

\begin{proposition}
Let $G$ be a connected Lie group, $\sigma:G \rightarrow G$ an involutive automorphism and $G^{\sigma} := \{g \in G : \sigma(g) = g \}$ the associated group of fixed points. Let $G^{\sigma}_0$ be the connected component of $G^{\sigma}$ which contains the identity and assume that $G^{\sigma}_0$ is compact. Then, for any compact subgroup $K \subset G$ with $G^{\sigma}_0 \subset K \subset G^{\sigma}$, the homogeneous space $G/K$, equipped with a $G$-invariant metric, is a symmetric space.
\end{proposition}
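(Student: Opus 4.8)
The plan is to exhibit, at every point of $M := G/K$, an involutive isometry whose differential there is $-\id$; this is precisely the defining property of a (globally) Riemannian symmetric space. Write $\pi\colon G \to M$ for the canonical projection, $o := \pi(e)$ for the base point, and $\tau_g\colon M \to M$, $\tau_g(\pi(h)) := \pi(gh)$, for the transitive action of $G$. Since $K \subseteq G^{\sigma}$, the automorphism $\sigma$ fixes $K$ pointwise, so $\tilde{\sigma}(\pi(g)) := \pi(\sigma(g))$ is well defined; it is a diffeomorphism with $\tilde{\sigma}^2 = \id$, it fixes $o$, it satisfies $\tilde{\sigma}\circ\pi = \pi\circ\sigma$, and it is $\sigma$-equivariant, i.e.\ $\tilde{\sigma}\circ\tau_g = \tau_{\sigma(g)}\circ\tilde{\sigma}$ for all $g\in G$. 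The map $\tilde{\sigma}$ is the candidate symmetry at $o$.

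Next I would read off the tangent picture at $o$. Let $\mathfrak{g}$ be the Lie algebra of $G$ and $\theta := \rmD\sigma(e)\colon\mathfrak{g}\to\mathfrak{g}$, an involutive automorphism of $\mathfrak{g}$, so that $\mathfrak{g} = \mathfrak{k}\oplus\mathfrak{m}$ splits into the $(+1)$- and $(-1)$-eigenspaces of $\theta$. The $(+1)$-eigenspace equals the Lie algebra of $G^{\sigma}$, which in turn coincides with the Lie algebra $\mathfrak{k}$ of $K$ because of the sandwich $G^{\sigma}_0 \subseteq K \subseteq G^{\sigma}$; hence $T_oM$ is canonically identified with $\mathfrak{m}$ via $\rmD\pi(e)|_{\mathfrak{m}}$. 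Since $\sigma$ fixes $K$, it commutes with conjugation by elements of $K$, so $\theta$ commutes with $\mathrm{Ad}(k)$ for $k\in K$ and $\mathfrak{m}$ is $\mathrm{Ad}(K)$-invariant; and because $\tilde{\sigma}\circ\pi = \pi\circ\sigma$, this identification carries $\rmD\tilde{\sigma}(o)$ to $\theta|_{\mathfrak{m}} = -\id$. Thus $\rmD\tilde{\sigma}(o) = -\id$ on $T_oM$.

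Then I would fix a metric and globalize. Compactness of $K$ lets us average any inner product on $\mathfrak{m}$ over $\mathrm{Ad}(K)$, producing an $\mathrm{Ad}(K)$-invariant inner product and hence a $G$-invariant Riemannian metric on $M$; take any such. Differentiating the equivariance relation gives, for $p=\pi(g)$,
\begin{equation*}
  \rmD\tilde{\sigma}(p) = \rmD\tau_{\sigma(g)}(o)\circ\rmD\tilde{\sigma}(o)\circ\rmD\tau_g(o)^{-1},
\end{equation*}
a composition of linear isometries (the two translations by $G$-invariance of the metric, and $\rmD\tilde{\sigma}(o) = -\id$ trivially), so $\tilde{\sigma}$ is an isometry of $M$. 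For an arbitrary $p = \pi(g)$ put $s_p := \tau_g\circ\tilde{\sigma}\circ\tau_g^{-1}$; then $s_p$ is an isometry, $s_p(p)=p$, $s_p^2=\id$, and $\rmD s_p(p) = -\id$. Since $M$ is homogeneous it is complete, so the existence of such an involutive geodesic symmetry at every point shows that $M$ is a Riemannian symmetric space.

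The step I expect to require the most care is the tangent computation: one must confirm that $\mathfrak{k}$ is exactly the full $(+1)$-eigenspace of $\theta$ — this is where the hypothesis $G^{\sigma}_0 \subseteq K \subseteq G^{\sigma}$ is essential, since it forces $\mathrm{Lie}(K) = \mathrm{Lie}(G^{\sigma})$ — and that the canonical isomorphism $T_oM \cong \mathfrak{m}$ genuinely intertwines $\rmD\tilde{\sigma}(o)$ with $\theta|_{\mathfrak{m}}$, which one checks from the commuting square $\tilde{\sigma}\circ\pi = \pi\circ\sigma$ together with $\ker\rmD\pi(e) = \mathfrak{k}$. The remaining verifications (well-definedness and equivariance of $\tilde{\sigma}$, existence of the invariant metric, completeness of homogeneous Riemannian manifolds) are routine.
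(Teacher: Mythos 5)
The paper does not prove this proposition; it cites it directly as \cite[Prop.~3.4]{helgason1979differential} and moves on. Your argument is the standard textbook proof of that result (essentially the one in Helgason): define $\tilde{\sigma}(\pi(g)) := \pi(\sigma(g))$, check well-definedness from $K \subseteq G^{\sigma}$, use $G^{\sigma}_0 \subseteq K$ to force $\mathrm{Lie}(K)$ to equal the full $(+1)$-eigenspace of $\theta = \rmD\sigma(e)$ so that $\rmD\tilde{\sigma}(o) = -\id$ under $T_o(G/K) \cong \mathfrak{m}$, average over the compact $K$ to get a $G$-invariant metric, use $\sigma$-equivariance to upgrade $\tilde{\sigma}$ to an isometry, and conjugate by $\tau_g$ to produce symmetries at all points — and it is correct in every step.
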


For a better understanding of the proposition, we recall that a $G$-invariant metric on $G/K$ is a Riemannian metric in which all of the maps $xK \mapsto gxK$, $g \in G$, are isometries. To use the proposition for our purposes, let%
\begin{equation*}
  G := \GL^+(n,\R) = \{ g \in \GL(n,\R) : \det(g) > 0 \},%
\end{equation*}
which is clearly a connected Lie group. We define%
\begin{equation*}
  \sigma:G \rightarrow G,\quad g \mapsto (g\trn)^{-1}.%
\end{equation*}
This map is an involutive automorphism, since $\sigma(g_1g_2) = (g_2\trn g_1\trn)^{-1} = (g_1\trn)^{-1}(g_2\trn)^{-1} = \sigma(g_1)\sigma(g_2)$ and $\sigma^2(g) = g$. We compute%
\begin{equation*}
  G^{\sigma} = \{ g \in G: (g\trn)^{-1} = g \} = \{ g \in G : gg\trn = I \} = \SO(n).%
\end{equation*}
Clearly, $G^{\sigma}$ is connected, so $G^{\sigma} = G^{\sigma}_0$ and we can only choose $K = \SO(n)$. It follows that $G/K = \GL^+(n,\R)/\SO(n)$ is a symmetric space when equipped with a $G$-invariant metric. We can identify the space $G/K$ with $\SC^+_n$ via the mapping%
\begin{equation*}
  \varphi:G/K \rightarrow \SC^+_n,\quad gK \mapsto gg\trn.%
\end{equation*}
Indeed, any matrix of the form $gg\trn$ with $g \in G$ is symmetric and positive definite. Conversely, every $p \in \SC^+_n$ can be written as $p = \varphi(p^{\frac{1}{2}})$. We have the equivalences%
\begin{align*}
   g_1K = g_2K \quad &\Leftrightarrow \quad g_2^{-1}g_1 \in \SO(n) \\
               \quad &\Leftrightarrow \quad g_2^{-1}g_1 = ((g_2^{-1}g_1)\trn)^{-1} = g_2\trn (g_1\trn)^{-1} \\
							 \quad &\Leftrightarrow \quad g_1g_1\trn = g_2g_2\trn.%
\end{align*}
We have thus shown that $\varphi$ is well-defined and bijective. To do the curvature computations in $G/K$ instead of $\SC^+_n$, the Riemannian metric on $G/K$ needs to be defined in such a way that $\varphi$ is an isometry. First, observe that the pullback of the metric that we defined on $\SC^+_n$ to $G/K$ via $\varphi$ yields a $G$-invariant metric on $G/K$, since the left translations $xK \mapsto gxK$ correspond to the maps $p \mapsto gpg\trn$ on $\SC^+_n$ under the given identification and these maps preserve the trace metric.%

The tangent space of $G/K$ at the base point can be identified with the space of symmetric $n \tm n$ matrices, since $T_IG = \R^{n \tm n}$ and $T_IK = \{ X \in \R^{n \tm n} : X = -X\trn \}$. To compute the derivative of $\varphi$ at the base point, consider a smooth curve $\gamma:\R \rightarrow G/K$, $\gamma(t) = \tilde{\gamma}(t)K$ with $\tilde{\gamma}:\R \rightarrow G$ smooth and $\tilde{\gamma}(0) = I$, $v := \dot{\tilde{\gamma}}(0) \in \SC_n$. Then%
\begin{align*}
  \rmD\varphi(K)v = \frac{\rmd}{\rmd t}\Bigl|_{t=0} \varphi(\gamma(t)) = \frac{\rmd}{\rmd t}\Bigl|_{t=0} \tilde{\gamma}(t)\tilde{\gamma}(t)\trn = v + v\trn = 2v.%
\end{align*}
Since we consider the pullback of the trace metric to $G/K$ via $\varphi$, this map needs to be an isometry. That is, the inner product on $T_K(G/K) = \SC_n$ has to be defined by%
\begin{equation}\label{eq_lie_metric}
  \langle X,Y \rangle = 4\tr(XY).%
\end{equation}

According to \cite[Thm.~4.2]{helgason1979differential}, the Riemannian curvature tensor of $G/K$ at the point $p_0 = K$ can be computed as%
\begin{equation*}
  R(X^*,Y^*)Z^*(p_0) = -[[X,Y],Z]^*(p_0) \mbox{\quad for all\ } X,Y,Z \in \SC_n.%
\end{equation*}
Here, $[X,Y] = XY - YX$ is the commutator of matrices and%
\begin{equation*}
  X^*(p) := \frac{\rmd}{\rmd t}\Bigl|_{t=0} \exp(tX) \cdot p,%
\end{equation*}
where $\cdot$ denotes the canonical action of $G$ on $G/K$ and $\exp$ is the Lie group exponential map. In our case,%
\begin{equation*}
  X^*(p_0) = \frac{\rmd}{\rmd t}\Bigl|_{t=0} \pi(\exp(tX)),%
\end{equation*}
where $\exp(tX) = \sum_{k=0}^{\infty}\frac{1}{k!}t^kX^k$ and $\pi:G \rightarrow G/K$, $\pi(g) = gK$, is the canonical projection. It follows that%
\begin{equation*}
  X^*(p_0) = \rmD\pi(I) X.%
\end{equation*}
Since $\rmD\pi(I)$ is the identity on $\SC_n$, it follows that $X^*(p_0) = X$ and thus%
\begin{equation*}
  R(X^*,Y^*)Z^*(p_0) = -[[X,Y],Z] \mbox{\quad for all\ } X,Y,Z \in \SC_n.%
\end{equation*}
Using \eqref{eq_lie_metric}, the sectional curvature for a plane spanned by two linearly independent $X,Y \in \SC_n$ can then be computed as%
\begin{align*}
  K(X,Y) &= \frac{\langle R(X,Y)Y,X \rangle}{\|X\|^2\|Y\|^2 - \langle X,Y \rangle^2} = \frac{4 \tr( -[[X,Y],Y] X )}{ 4\tr(X^2) 4\tr(Y^2) - 16 \tr(XY)^2} \\
				 &= -\frac{1}{4}\frac{\tr( XY^2X - YXYX - YXYX + Y^2X^2 )}{\tr(X^2)\tr(Y^2) - \tr(XY)^2} \\
				 &= -\frac{1}{4}\frac{\tr( 2Y^2X^2 - 2 (XY)^2 )}{\tr(X^2)\tr(Y^2) - \tr(XY)^2} = -\frac{1}{2}\frac{\tr(X^2Y^2) - \tr((XY)^2)}{\tr(X^2)\tr(Y^2) - \tr(XY)^2}.%
\end{align*}
Here, we used that $\tr(AB) = \tr(BA)$ for arbitrary $A,B$ and the linearity of $\tr$.%

Since the sectional curvature of a plane does not depend on the choice of the basis vectors, it suffices to consider orthonormal $X$ and $Y$. Hence, to find the smallest possible curvature, we have so solve the minimization problem%
\begin{align*}
  \min [\tr((XY)^2) - \tr(X^2 Y^2)] \mbox{\quad s.t.\ } \tr(X^2) = \tr(Y^2) = 1,\ \tr(XY) = 0.%
\end{align*}
Putting $A := XY$, we obtain%
\begin{align*}
  \tr((XY)^2) - \tr(X^2 Y^2) &= \tr(A^2) - \tr(XYYX) = \tr(A^2) - \tr(AA\trn) \\
	&= \tr( A(A - A\trn) ) = \langle A,A\trn - A \rangle_F.%
\end{align*}
We decompose $A$ into its symmetric and its skew-symmetric part: $A = A_+ + A_-$ with $A_+ = \frac{1}{2}(A + A\trn)$ and $A_- = \frac{1}{2}(A - A\trn)$. Since $\langle A_+,A_- \rangle_F = 0$, we obtain%
\begin{equation*}
  \tr((XY)^2) - \tr(X^2 Y^2) = \frac{1}{2} \langle A - A\trn,A\trn - A \rangle_F = -\frac{1}{2}\|A - A\trn\|^2_F.%
\end{equation*}
Using the main result of \cite{bottcher2008frobenius}, we can estimate%
\begin{equation*}
  \|A - A\trn\|^2_F = \| XY - YX \|^2_F = \|[X,Y]\|_F^2 \leq (\sqrt{2} \|X\|_F \|Y\|_F)^2 = 2.%
\end{equation*}
Hence, we obtain that $K(X,Y) \geq - \frac{1}{2}$. The results of \cite[Sec.~4]{bottcher2008frobenius} show that this bound is tight. Using \cite[Lem.~A.1]{kawan2021subgradient}, we obtain the following result.%

\begin{proposition}\label{prop_sec_curv_lb}
For each $n \in \N$, the smallest possible lower bound on the sectional curvature of $\SC^+_n$ is $-\frac{1}{2}$ and the same is true for any space of the form $\R^N \tm \SC^+_n$ with $N \in \N$.
\end{proposition}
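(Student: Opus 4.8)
The plan is to exploit the identification of $\SC^+_n$ with the Riemannian symmetric space $\GL^+(n,\R)/\SO(n)$ established above, which reduces the curvature computation to a single point. First I would invoke the symmetric-space curvature formula $R(X^*,Y^*)Z^* = -[[X,Y],Z]^*$ at the base point $p_0 = K$, together with the fact that $\rmD\pi(I)$ is the identity on $\SC_n$, to obtain the explicit expression $R(X,Y)Z = -[[X,Y],Z]$ for all $X,Y,Z \in \SC_n$. Combined with the normalized inner product $\langle X,Y\rangle = 4\tr(XY)$ that is forced by requiring $\varphi$ to be an isometry, this yields the closed form
\[
  K(X,Y) = -\frac{1}{2}\,\frac{\tr(X^2Y^2) - \tr((XY)^2)}{\tr(X^2)\tr(Y^2) - \tr(XY)^2}.
\]

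Next, since sectional curvature depends only on the plane and not on the chosen basis, I would restrict to orthonormal $X,Y$, reducing the search for the sharpest lower bound to the minimization of $\tr((XY)^2) - \tr(X^2Y^2)$ subject to $\tr(X^2) = \tr(Y^2) = 1$ and $\tr(XY) = 0$. The key algebraic step is to put $A := XY$ and rewrite $\tr((XY)^2) - \tr(X^2Y^2) = \tr(A^2) - \tr(AA\trn) = \langle A,\, A\trn - A\rangle_F$; decomposing $A$ into its symmetric and skew-symmetric parts and using their orthogonality then gives $\tr((XY)^2) - \tr(X^2Y^2) = -\tfrac{1}{2}\|A - A\trn\|_F^2 = -\tfrac{1}{2}\|[X,Y]\|_F^2$.

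At this point the main obstacle — and the only genuinely nontrivial ingredient — is the sharp commutator (B\"ottcher--Wenzel) inequality $\|[X,Y]\|_F \le \sqrt{2}\,\|X\|_F\|Y\|_F$, which I would quote from \cite{bottcher2008frobenius}. Applied with $\|X\|_F = \|Y\|_F = 1$ it gives $\tr((XY)^2) - \tr(X^2Y^2) \ge -1$, hence $K(X,Y) \ge -\tfrac{1}{2}$; the explicit constructions in \cite[Sec.~4]{bottcher2008frobenius} that attain equality in the commutator bound show that the value $-\tfrac{1}{2}$ is actually achieved, so it is exactly the smallest lower bound on the sectional curvature of $\SC^+_n$.

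Finally, for the product manifold $\R^N \tm \SC^+_n$, the Euclidean factor is flat and any two-plane in the product either lies in one of the two factors or mixes a Euclidean direction with an $\SC^+_n$-direction; planes of the latter type are flat as well, so the infimum of sectional curvatures over the product coincides with that over $\SC^+_n$. I would formalize this last step by citing \cite[Lem.~A.1]{kawan2021subgradient}, which records precisely this behaviour of sectional-curvature bounds under Riemannian products, completing the proof.
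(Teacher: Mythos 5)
Your proposal follows the paper's argument essentially verbatim: the same symmetric-space identification $\GL^+(n,\R)/\SO(n)$ with the scaled inner product $\langle X,Y\rangle = 4\tr(XY)$, the same reduction via the commutator $A = XY$ to $-\tfrac12\|[X,Y]\|_F^2$, the same invocation of the B\"ottcher--Wenzel inequality and its sharpness, and the same citation of \cite[Lem.~A.1]{kawan2021subgradient} for the product with $\R^N$. The proof is correct and takes the same approach as the paper.
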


\bibliographystyle{abbrv}
\bibliography{ProjSubg}

\end{document}